\newcommand\restr[2]{{
  \left.\kern-\nulldelimiterspace 
  #1 
  \littletaller 
  \right|_{#2} 
  }}
\newcommand{\littletaller}{\mathchoice{\vphantom{\big|}}{}{}{}}
\renewenvironment{proof}[1][Proof]{\noindent\textit{#1. } }{\hfill$\square$}
\newtheorem{lemma}{\sc Lemma}[subsection]
\journal{-}
\NewDocumentCommand\microdomain{}{\hat\Omega}
\begin{document}

\begin{frontmatter}




\title{Navier--Stokes Modelling of Non-Newtonian Blood Flow in Cerebral Arterial Circulation and its Dynamic Impact on Electrical Conductivity in a Realistic Multi-Compartment Head Model}


\author[inst1]{Maryam Samavaki\corref{cor1}}
\ead{maryamolsadat.samavaki@tuni.fi}
\cortext[cor1]{Corresponding author at: Sähkötalo building, Korkeakoulunkatu 3, Tampere, 33720, FI}

\affiliation[inst1]{organization={Mathematics, Computing Sciences, Tampere University},
            addressline={Korkeakoulunkatu 1}, 
            city={Tampere University},
            postcode={33014}, 
            country={Finland}}

            \affiliation[inst2]{organization={Faculty of  Mathematics, K. N. Toosi University of Technology},
            addressline={Mirdamad Blvd, No. 470}, 
            city={Tehran},
            postcode={1676-53381}, 
            country={Iran}}

\author[inst2]{Arash Zarrin nia}

\author[inst1]{Santtu Söderholm}
 
\author[inst1]{Sampsa Pursiainen}

\begin{abstract}
\textbf{Background and Objective:} This study aims to show the feasibility of evaluating the dynamic effect of non-Newtonian cerebral arterial circulation on the electrical conductivity distribution inside a realistic multi-compartment head model. This research question is both important and challenging considering various electrophysiological modalities, including transcranial electrical stimulation (tES), electro-/magnetoencephalography (EEG/MEG), and electrical impedance tomography (EIT). This is due to the significant impact of electrical conductivity on forward modelling accuracy, as well as the complex nature of tortuous vessel networks, limitations of data acquisition, particularly in Magnetic Resonance Imaging (MRI), and various non-linear blood flow phenomena, such as the relationship between shear rate and viscosity in non-Newtonian fluid.

\noindent \textbf{Methods:} As a governing flow model, we use the Navier--Stokes equations (NSEs) of the non-Newtonian flow. Our solver is subdivided into two stages. The first one solves the pressure field via a dynamical form of the pressure-Poisson equation, which follows from the original Navier--Stokes system. The second one performs a Leray-regularized update of the velocity field using the pressure distribution of the first stage. We establish the connection between blood velocity and viscosity using the Carreau-Yasuda model. The blood concentration in the microvessels of the brain tissues is approximated by Fick's law of diffusion, and the resulting conductivity mapping is obtained via Archie's law, i.e., an effective conductivity estimates for a mixture of fluid and porous tissue. The head model of the numerical experiments corresponds to an open 7 Tesla MRI dataset, which distinguishes the arterial vessels among other tissue structures.

\noindent \textbf{Results:} The results suggest that a dynamic model of CBF for both arterial and microcirculation can be established; we find blood pressure and conductivity distributions given a numerically simulated pulse sequence and approximate the blood concentration and conductivity inside the brain based on those.

\noindent \textbf{Conclusions:} Our model provides an approximation of the dynamical blood flow and the corresponding electrical conductivity distribution in the different parts of the brain. The advantage of our approach is that it is applicable with limited {\em a priori} information about the blood flow and with an arbitrary head model distinguishing the arteries.

\end{abstract}




\begin{keyword}
Navier--Stokes equations; pressure--Poisson equation; cerebral blood flow; Archie's law; Fick's law; Electrical conductivity atlas
\end{keyword}

\end{frontmatter}


\section{Introduction}
\label{sec:introduction}

This study concerns the dynamic effects of cerebral blood flow (CBF) and the influence of those on the electrical conductivity distribution of the brain. CBF is assumed to follow NSEs in the arterial circulation and a diffusion process in microcirculation, and the conductivity is found as a mixture \cite{j2001estimation, peters2005electrical, glover2000modified, cai2017electrical} between estimated volumetric blood concentration and porous tissues with constant conductivity values \cite{dannhauer2010}. The importance of the present focus is highlighted by the central role of the electrical conductivity in several electrophysiological modalities such as EEG \cite{niedermeyer2004},  tES \cite{herrmann2013transcranial}, and EIT \cite{cheney1999electrical,moura2021anatomical,lahtinen2023silico}.  Recently, solving NSEs for a simplified cylindrical vessel geometry has been proposed as a technique for forming an atlas of the electrical conductivity \cite{moura2021anatomical,lahtinen2023silico}. In this study, we aim to show that a geometrical restriction is not necessary {\em per se}, but that a dynamic atlas can be formed directly for a given realistic multi-compartment head model.

To this end, we present a dynamic solution for arbitrary non-Newtonian blood flow and implement it numerically using the finite element method \cite{svavcek2008approximation,pacheco2021continuous}. In non-Newtonian flow, viscosity is a non-constant parameter that depends on the shear rate tensor determined by the velocity field and, thereby, needs to be taken into account in the formulation of the NSEs to obtain an appropriate approximation of the blood flow. The importance of the non-Newtonian blood flow phenomenon is highlighted by the important role of viscosity in cardiovascular diseases, for example, in stenosis \cite{liu2021comparison} and blood clotting \cite{ouared2005lattice,wajihah2022effects}. In this study, we consider the Carreau-Yasuda viscosity model  \cite{johnston2004non, johnston2006non, cho1991effects} which assumes that the viscosity is a smooth function of the strain tensor with two asymptotic values for the cases where the velocity tends to zero or infinity.

We use the NSEs of the non-Newtonian flow as the governing flow model. Our solver is subdivided into two stages, the first of which solves the pressure field via a dynamical form of PPE, which follows from the original Navier--Stokes system via taking a sidewise divergence \cite{pacheco2021continuous}. In the second stage, this pressure field is substituted into Leray-regularized NSEs to obtain the velocity field \cite{leray1934mouvement, pietarila2008three, guermond2004definition}. The incoming pressure is assumed to be given {\em a priori}. We consider this approach advantageous in practical applications where measuring blood pressure is relatively easy compared to blood velocity. In addition, we assume a linear pressure-area relationship for vessel cross-sections \cite{absi2018revisiting, st1998computer} and that the boundary derivative of the pressure is locally proportional to the fluid flow rate to establish boundary conditions for the pressure field, which is in accordance with previous studies \cite{berg2020modelling, arciero2017mathematical, reichold2009vascular}. 

To estimate circulation in distinguishable arteries, we propose a combination of the PPE method and Fick's law of diffusion for the microcirculation \cite{berg2020modelling, arciero2017mathematical}. We determine the relationship between estimated blood concentration and tissue conductivity through Archie's law, which is a two-phase mixture model that finds the effective conductivity of a mixture between fluid and porous media \cite{j2001estimation, peters2005electrical, glover2000modified, cai2017electrical}.

Numerical experiments were performed using a human head model consisting of multiple different compartments obtained by segmenting an openly available 7 Tesla MRI dataset \cite{ds} and allowing one to distinguish the arterial vessels inside the brain \cite{fiederer2016}. The results suggest that a dynamic model of CBF for both arterial and microcirculation can be established, allowing us to approximate the dynamic properties of the conductivity inside the brain. These results cover simulated mean, peak, and standard deviation (STD) distributions for two different heart rates.


This article consists of four sections. The second section provides a brief review of various topics, including NSEs, dynamic PPE, Fick's law for microcirculation, Archie's model, non-Newtonian viscosity modelling, and our approach to simulating signal pulses. The third section presents the results of the study. Finally, in the last section, the results are discussed and potential directions for future work are suggested.

\section{Methods}
\label{sec: Theory} 

\subsection{Navier--Stokes equations for non-Newtonian blood flow}
\label{sec:nses} 

In order to determine the pressure and velocity in the blood vessels, NSEs are approximated as follows:
\begin{subequations}
\begin{align}
&\rho \, {\bf u}_{,t} \! + \! \rho \, \mathsf{div}({\bf u} \! \otimes \! {\bf u}) \! - \! \mathsf{div}(\mu   {\bf Su} ) \!+ \! \nabla p \! =  \! \rho \,{\bf {\bf f}}&\mathsf{in}\,\Omega \! \times \! [0, T]\,,
\label{eqn:line-1.1}
 \\
& \mathsf{div}({\bf u})=0&\mathsf{in}\,\Omega \! \times \!  [0, T]\,,
\label{eqn:line-1.2}
  \\
&{\bf u}({\bf x};0)=0 &\mathsf{on}\,\Omega \,, 
\\
&{p}({\bf x};t)=p^{\mathcal{(B)}}({\bf x};t) - \frac{1}{\zeta \lambda } \frac{\partial p }{\partial \vec{\bf n} }&\mathsf{on}\, \partial \Omega\,, \label{robin}
\end{align}
\label{main-NSE}
\end{subequations}
where 
\begin{equation} {\bf Su} = \nabla {\bf u} + (\nabla {\bf u})^T
\label{eq:strain_rate}
\end{equation} 
is a second-order strain tensor described in section \eqref{sec:Mechanical characteristics of blood}.
The physical bound of our problem is considered to $\Omega\cup\microdomain$, where \(\microdomain\) is the micro-circulation domain and $[0, T]\in\mathbb{R}^+$ is the time domain. The blood velocity and pressure are defined as ${\bf u}={\bf u}({\bf x};t)$ and $p = p({\bf x}; t)$, respectively, in which ${\bf x}\in \Omega \cup \microdomain$ and $t\in[0, T]$. In the system \eqref{main-NSE}, the symbols $\rho$ and $\mu$ denote blood density and viscosity, respectively. The vector ${\bf f}$ represents the force density stemming from a constant gravitational field, while the term $\mathsf{div}(\mu {\bf Su})$ accounts for friction resulting from viscous drag.

The distribution ${p}^{\mathcal{(B)}}$  can be conceptualized as an incoming pressure pulse that generates the velocity field.
The  Robin-type boundary condition (\ref{robin}) has been  derived from the Hagen--Poiseuille model \cite{caro2012mechanics} of blood flow between arteries and arterioles, approximating the boundary normal derivative, i.e.,  the force density pushing blood out of the arteries,  as \[   -{\bf g}(\nabla p, {\vec{{\bf n}}})=-\frac{\partial p}{\partial\vec{{\bf n}}} =  \zeta \lambda (p - p^{\mathcal{B}}).  \]
Here  ${\bf g}$ is a Riemannian metric tensor, which is incorporated for generality, ${\vec{{\bf n}}}$ defines the outward normal unit vector on the artery wall, parameter $\zeta$ affects the amount of blood flowing into the arterioles, and $\lambda = \xi/\overline{\xi}$ is the ratio between  the length density $\xi$ of microvessels per unit volume ($m^{-2}$), which represents the total number of cross-sections per unit area, and its integral mean $\overline{\xi}$, which  is calculated as
\begin{align*}
\overline{\xi} = \frac{1}{|\partial \Omega|} \int_{\partial \Omega} \xi \hbox{d} \omega_{\partial \Omega}\,.
\end{align*}

The following formula for $\zeta$ can be derived from the Hagen--Poiseuille model by assuming that the normal derivative corresponds to the pressure decay per unit length in the arterioles, which lead the blood from the arteries to the microcirculation system \cite{samavaki2023ppe}:
\[
  \zeta \! = \! \frac{8 \pi \mu \overline{Q}}{  |\partial \Omega|  A_a \overline{p}  }\,.
\]
Here $Q$ denotes the total volumetric blood flow rate through the arteries, $A_a$ the cross-sectional area of an individual arteriole, $\overline{p}$ a reference average pressure value, ${\mu}$  a reference value for dynamic viscosity, which is defined in section \eqref{sec:Mechanical characteristics of blood}, and $|\partial \Omega|=\int_{\partial \Omega}\mathrm{d}\omega_{\partial \Omega}$ represents the surface area of $\partial \Omega$ which is the common boundary of the domain $\Omega$ and its exterior $\hat{\Omega}$.

\subsection{Dynamic pressure-Poisson equation}
\label{sec:Pressure-Poisson equatio}

A desired pressure field has the form $p = p^{\mathcal{(D)}} + p^{\mathcal{(H)}}$, i.e., it consists of the sum of dynamic arterial pressure $p^{\mathcal{(D)}}$ and hydrostatic venous pressure $p^{\mathcal{(H)}}$. Assuming that the gravity field remains constant, the blood flow is incompressible, and the geometry is flat with zero Riemannian curvature throughout the domain, the pressure $p$ meets the following pressure--Poisson system \cite{samavaki2023ppe}:
\begin{subequations}
\begin{align}
 &\Delta_B\, p =\mathsf{div}\big(\mathsf{div}(\mu   {\bf Su} ) -\rho \, \mathsf{div}({\bf u} \! \otimes \! {\bf u}) \big)
&\mathsf{in}\,\Omega \! \times \! [0, T]\,,
\label{eqn:line-2.2}
\\
&{\bf g}(\nabla p, \vec{{\bf n}})=-\zeta \, \lambda \,  ( p -  {p}^{\mathcal{(B)}}) &\mathsf{on}
\,\,\partial \Omega\,.
\label{eqn:line-2.4}
\end{align}
\label{main-PPE}
\end{subequations}

\label{sec:dynamic_form}

In order to derive a dynamic version of  (\ref{main-PPE}), it is assumed that there is a linear relationship between the volumetric flow rate $Q$ and pressure $p$, given by: 
\begin{equation}
p = \frac{\overline{p}}{\overline{Q}} \,  Q\,, 
\end{equation}
where $\overline{Q}$ is a reference value for the flow rate.  Additionally, it is assumed that pressure and area are linearly related \cite{st1998computer}, expressed as:
\begin{equation}
p = \alpha \, ( A - \overline{A})\,, 
\end{equation}
where $A$ is the cross-sectional area of an artery and $\overline{A}$ is a reference area where the pressure is zero. 

We can express the total volume of the arteries as $V = A \ell$, where $\ell$ is a constant equivalent length. Then, we can derive the following relationships:
\begin{equation}
Q = \frac{\overline{Q}}{\overline{p}} p =  \frac{\partial V}{\partial t} = \frac{\ell}{\alpha} \frac{\partial p}{\partial t} \quad \hbox{or} \quad   p = \frac{ \overline{p} \,  \ell}{\alpha \,  \overline{Q} } \frac{\partial p}{\partial t}\,.
\end{equation}
Assuming that the vessel has increased its diameter by a factor of $\sqrt{1 + \beta^2}$ due to the pressure $\overline{p}$, which can be expressed as $\overline{p} = \alpha  \beta^{2} \overline{A}$, we can derive the following equation:
\begin{equation}
\frac{\partial p}{\partial t} = \frac{(1 + \beta^2)  \overline{Q} }{\beta^2  \overline{V}}  p\,,
\label{pt}
\end{equation}
where $\overline{V}$ denote the overall volume of changes, which can be expressed as $(1 + \beta^2) \overline{A} \ell$.


Therefore, the PPE represented by equation \eqref{main-PPE} can be expressed in a dynamic form, where solving it results in a pulse wave as shown in  \eqref{pulse_wave_equation} on the boundary of the domain $\partial \Omega$ as follows:
\begin{subequations}
\begin{align}
 &\Delta_B\, p=   \mathsf{div}\big(\mathsf{div}(\mu   {\bf Su} ) -\rho\,{\bf u}\cdot\nabla {\bf u}\big)&\mathsf{in}\,\Omega \! \times \! [0, T]\,,
\label{eqn:PWE1}
\\
 &   {\bf g}(\nabla p, {\vec{{\bf n}}}) = - \frac{\lambda}{ \zeta\,\overline{\nu}^2} \, \left( \frac{\partial^2 p}{\partial t^2} -  \frac{\partial^2 {p}^{\mathcal{(B)}}}{\partial t^2} \right)
&\mathsf{on}
\,\,\partial \Omega\, ,
\label{eqn:PWE2}
\end{align}
\label{main-PWE}
\end{subequations}
with 
\begin{equation}
    \overline{\nu} = \frac{(1+\beta^2) \, |\partial \Omega| \, A_a  \, \overline{p}}{8 \pi \,  \mu   \, \beta^2 \, \overline{V}}\, .
\end{equation}

\subsubsection{Variational form of the Navier--Stokes system}
\label{sec:variational_nses}

Our assumptions are as follows: 1) The blood pressure, denoted as $p$, is defined within the space $\mathbb{W}=\mathrm{L}^2(\Omega)$. Additionally, we have ${\bf u}\in{\mathbb{V}}_0$ and ${\bf f}\in{\mathbb{V}}$, where $\mathbb{V}_0$ and $\mathbb{V}$ represent spaces of vector-valued functions in 3D. Specifically, $\mathbb{V}=[\mathrm{H}^1(\Omega)]^3$ and ${\mathbb{V}}_0 = [{ \mathrm{H}}^1_0(\Omega)]^3$, where ${\mathbb{V}}_0\subset\mathbb{V}$. 2) The Ricci curvature of ${\bf u}$ is zero, i.e., ${\bf Ri}({\bf u})=0$, which means that the domain has significant implications for the geometry; 3) The vanishing of the second covariant derivative of blood viscosity indicates that the blood flow in different directions through the blood vessels of the domain is the same. This is because the viscosity of blood is assumed to be uniform in all directions; 4)  the gravitational field is assumed to be divergence-free, i.e.,  $\mathsf{div}(\rho {\bf g})=0$. By applying Lemma \ref{curl-au}, we can express the variational formulation of the system of NSEs for the dynamic PPE \eqref{main-PWE} as follows:

\begin{itemize}
    \item[I.] Find $p\in {\mathbb{W}}$ such that, for a smooth enough test function $ q\in  {\mathbb{W}}$

\begin{equation}
\begin{aligned}
 b( p, q)&=     2 \, \int_{\Omega}   {\bf g} \left( \nabla q, (\nabla {\bf u}) \nabla \mu \right)
 \, \hbox{d} \omega_{\Omega}  \\ & - \int_{\Omega} \rho \, {\bf g} \left( \nabla q, (\nabla{\bf u}) \, {\bf u} \right)  \hbox{d}\omega_{\Omega}\,,
\end{aligned}
\label{PWE_variational}
\end{equation}

where the continuous bilinear form $b:{\mathbb{W}}\times {\mathbb{W}}\rightarrow \mathbb{R}$ is defined as follows:
 \begin{align*}
b(p, q):= & \! \int_{\Omega } \! {\bf g}(\nabla p, \! \nabla q)\mathrm{d}\omega_{\Omega} - \int_{\partial \Omega}q\,{\bf g}(\nabla p, \vec{{\bf n}})\mathrm{d}\omega_{\partial \Omega} \\
= & \! \int_{\Omega } \! {\bf g}(\nabla p, \! \nabla q) \, \mathrm{d}\omega_{\Omega}  \! + \! \zeta \lambda \frac{\partial^2 }{\partial t^2} \! \int_{\partial \Omega} q\,( p \! - \! p^{\mathcal(B)}) \,  \mathrm{d}\omega_{\partial \Omega}\,. 
\end{align*}
 
The variational formulation for the system of NSEs \eqref{main-NSE} can be expressed through the application of Lemma \ref{lemma1} and \ref{curl-au} as follows:
 \item[II.] Find ${\bf u} \in {\mathbb{V}}_0$  such that, for a smooth enough 
 test function $ {\bf v} \in {\mathbb{V}}_0$
 \begin{equation}
       a({\bf u}, {\bf v}) = -  \int_\Omega  {\bf g}\left(\nabla p, {\bf v}\right)   \, \hbox{d} \omega_\Omega \, .
 \label{NSE_variational}
 \end{equation}
The continuous bilinear form $a_j:{\mathbb{V}}_0 \times {\mathbb{V}}_0 \rightarrow \mathbb{R}$ is defined as follows:
\begin{align*}
 a({\bf u}, {\bf v}) &:=  \frac{\partial}{\partial t} \int_{\Omega}  \rho \,{\bf g}( {\bf u},  {\bf v})  \, \mathrm{d}\omega_{\Omega} +  \int_{\Omega}\rho\,  {\bf g} \left({\bf u}\cdot\nabla {\bf u},  {\bf v} \right)  \,  \mathrm{d}\omega_{\Omega}
 \\
 & -2  \int_{\Omega} {\bf g}\big((\nabla{\bf u})^T\nabla\mu,  {\bf v} \big)  \,  \mathrm{d}\omega_{\Omega}
+\int_{\Omega} \mu\,({\bf Au}:\nabla{\bf v})  \,  \mathrm{d}\omega_{\Omega}
 \\
 &-\int_{\Omega}\rho\,{\bf g} \left({\bf f}, {\bf v}\right)\,  \mathrm{d}\omega_{\Omega}\,.
\end{align*}
\end{itemize}

\subsection{Fick's law and microcirculation}
\label{sec:ficks_law}

Microcirculation is modeled using Fick's first law. Our assumption is that the blood concentration and diffusive flow of the tissue are proportional to the relative density of microvessels in the tissue following
\begin{equation}
\label{fick}
  {\bf J}=-\varsigma \lambda \nabla{\bf c}  \quad \hbox{with} \quad \varsigma =  \frac{A_a \overline{p} }{8 \pi  \mu}\,.
\end{equation}
Diffusivity here is defined via the effective diffusion coefficient $\varsigma$. The following equation is derived without any convection because the model does not include tissue anisotropies or microvessel structures:
\begin{subequations}
\begin{align}
{\bf c}_{,t}-\varsigma\lambda \Delta_B {\bf c}=- \boldsymbol{ \varepsilon} {\bf c}+{\bf s} \quad \mathsf{in}\,\,\hat\Omega \! \times \! [0, T]\,.
\label{DE.vessel_1.1}
\end{align}
\label{DE.vessel_1}
\end{subequations}
We assume that the peak concentration occurs at the source ${\bf s}={\bf s}({\bf x}, t)$, which differs from zero only for $\partial \Omega$, representing a flux from $\Omega$ to $\hat{\Omega}$. In a venous circulation system, the blood flow from the microcirculation domain occurs at a rate proportional to the decay term $\boldsymbol{ \varepsilon} {\bf c}$. In the FE discretization, the absorption amplitude ${\varepsilon}$  multiplied by the volume $\mathcal{V}_{\hbox{\scriptsize max}} = (4/3) \pi R^3$ of the largest element is set to match the outward flux integrated over a surface of a sphere with an equal volume:
\[
\varepsilon \mathcal{V}_{\hbox{\scriptsize max}} = (4/3) \pi R^3 \varepsilon =  4 \pi R^2 \| {\bf J} \| \quad \hbox{or} \quad 
\varepsilon =   \varsigma \lambda \frac{ \vartheta }{ L  } \left( \frac{45 \pi}{\mathcal{V}_{\hbox{\scriptsize max}}} \right)^{1/3}\,.
\]

\subsubsection{Variational form of Fick's law}

Multiplying \eqref{DE.vessel_1.1}  by a smooth enough test function $h \in {\hat{\mathbb{W}}}$  and integrating by parts yields the following variational form of Fick's law:
\begin{itemize}
  \item[] Find $c\in {\mathbb{W}}$ such that, for a smooth enough  test function $ q\in  {\mathbb{W}}$
\begin{equation}
 \begin{aligned}
   d( c, h) = ( s, h) +  \int_{\partial \Omega}\varsigma \lambda h\, {\bf g}( \nabla c, {\vec{ \bf{n}}})\, \mathrm{d }\omega_{\partial \Omega}\,.
 \end{aligned}
\label{fick_variational}
\end{equation}
\end{itemize}

Here, $\vec{\bf n}$ represents the normal unit vector in the microcirculation domain, and the linear boundary term describing the incoming flow is on the left. Bilinear forms $ d( c, h)$ and $( s, h)$ are defined by $d:{\hat{\mathbb{W}}}\times {\hat{\mathbb{W}}}\rightarrow \mathbb{R}$ and $s:{\hat{\mathbb{W}}}\times {\hat{\mathbb{W}}}\rightarrow \mathbb{R}$, where

\begin{align*}
d(c, h) =  & \frac{\partial}{\partial t}  \int_{\Omega} c\, h\,\mathrm{d}\omega_{\hat{\Omega}}   +  \int_{\hat{\Omega}}  \varsigma \lambda{\bf g}(\nabla c, \nabla h)\,\mathrm{d}\omega_{\hat{\Omega}}
+  \int_{\hat{\Omega}} \varepsilon c\, h\,\mathrm{d}\omega_{\hat{\Omega}}\,,
\\
( s, h) = & \int_{\hat{\Omega}} s\,h\,\mathrm{d }\omega_{\hat{\Omega}}\,.
\end{align*}

\subsection{Discretization}

We utilize the Ritz-Galerkin method \cite{braess2007finite} to discretize equations \eqref{PWE_variational}, \eqref{NSE_variational}, and (\ref{fick_variational}). It is assumed that the discrete solutions are enclosed within the trial function spaces as stated below.
\begin{align*}
{\mathbb{V}_h}&=\mathsf{span}\big\{\psi^1,\dots,\psi^{n}\big\}
   \subset {\mathbb{V}} 
   \\
 {\mathbb{W}_h}&=\mathsf{span}\big\{\varphi^1,\dots,\varphi^{m}\big\}
   \subset {\mathbb{W}} 
   \\ 
   {\hat{\mathbb{W}}_h}&=\mathsf{span}\big\{\phi^1,\dots,\phi^{m}\big\}
   \subset {\hat{\mathbb{W}}}\,. 
  \end{align*}
In each case, the discretization error is assumed to be orthogonal to the solution. 
We utilize linear Lagrangian (nodal) basis functions, which consist of sets of piecewise linear functions supported in $\Omega$ and $\hat{\Omega}$. Specifically, we consider the sets $\{\psi^i\}_{i=1}^{n}$ and $\{\varphi^h\}_{h=1}^{m}$ supported in $\Omega$ and $\{\phi^h\}_{h=1}^{m}$ supported in $\hat{\Omega}$, such that $\psi^i(x_j)=\delta^i_j$ for $i, j=1, \cdots, n$, $\varphi^h(x_k)=\delta^h_k$ for $h, k=1, \cdots, m$ at the finite element mesh nodes of $\Omega$, and $\phi^h(x_k)=\delta^h_k$ for $h, k=1, \cdots, m$ at the nodes of $\hat{\Omega}$. Consequently, the velocity $\bf u\in {\mathbb{V}}$, the pressure $p \in \mathbb{W}$, and the concentration $c \in \hat{\mathbb{W}}$ are of the form   
\begin{align*}
 u^{\ell}({\bf x}; t)&=\sum_{i=1}^{n} \psi^i({\bf x}) u^{\ell}_i(t)\,, \quad
f^{\ell}({\bf x})=\sum_{i=1}^{n} \psi^i({\bf x}) f^{\ell}_i\,,
 \\
 p({\bf x}; t) &=  \sum_{h=1}^{m} \varphi^{h}({\bf x}) p_{h}(t) \,,\quad \,\,\,
c({\bf x}; t) \! = \! \sum_{h=1}^{m} \phi^{h}({\bf x}) c_{h}(t)\,,
\end{align*}
for $\ell= 1, 2, 3$. The coordinate vectors are denoted by {\setlength\arraycolsep{2pt} 
\begin{align*} 
{\bf u} & = (u^1, u^2, u^3)=(\{u^1_i\}_{i=1}^n, \{u^2_i\}_{i=1}^n, \{u^3_i\}_{i=1}^n)
\\ 
{\bf f} & =  (f^1, f^2, f^3)=(\{f^1_i\}_{i=1}^n,\{f^2_i\}_{i=1}^n, \{f^3_i\}_{i=1}^n) 
\\ 
 {\bf p } & =  (p^1, p^2, p^3)=(\{p_i\}_{i=1}^m,\{p_i\}_{i=1}^m, \{p_i\}_{i=1}^m)
\\
{\bf c }& =   (c_1, c_2, \ldots, c_m)
\\
\uppsi&=(\psi_1, \ldots, \psi_n )
\quad
\upvarphi =(\varphi_1, \ldots, \varphi_m )
\\
\Phi &=( \phi_1, \ldots, \phi_m)\,.
\end{align*}
In this context, we have $p^\ell=p$ for $\ell=1, 2, 3$.


\subsubsection{Discretized arterial blood flow}
\label{sec:discretized_blood_flow}

The discretized version of the variational form \eqref{PWE_variational} is given by 
\begin{equation}
{\bf K} p  + \frac{\partial^2}{\partial t^2} {\bf M} p = {\bf D}({\bf u})\,,
\label{PPE_diff}
\end{equation}
where
{\setlength\arraycolsep{2 pt}
\begin{align*}
&  {\bf K}_{ij}  =  \int_{\Omega} \! \varphi_{i, h}\,\hat{\varphi}_{j, h}=\int_{\Omega} \! \hbox{d}\upvarphi \,(\hbox{d}\hat{\upvarphi})^T \,  \hbox{d} \omega_{\Omega }
\\ 
& {\bf M}_{ij} =  \int_{\partial \Omega} \! \zeta \lambda  \, \varphi_i \hat{\varphi}_j  \, \hbox{d} \omega_{\partial \Omega}=\int_{\partial \Omega} \!\zeta \lambda  \, \upvarphi\hat{\upvarphi}^T\,  \hbox{d} \omega_{\partial \Omega} 
\\
& {\bf D}^\ell_{ij}({\bf u}) =2  \int_{\Omega}  \hat{\varphi}_{\ell, i}\,\psi_{j, h}\,\mu_{,h}\, \hbox{d} \omega_{\Omega}  
 - \int_{\Omega} \!\rho \, \hat{\varphi}_{\ell, i}\,\psi_{j, h}\,u^{h} \,\hbox{d}\omega_{\Omega}
 \\
 &\quad\quad\,\,\,=2  \int_{\Omega}  \hbox{d}\hat{\upvarphi}\,\big((\hbox{d}\uppsi)\,\hbox{d}\mu \big)\,\, \hbox{d} \omega_{\Omega} -\int_{\Omega}\rho\, \big(\hbox{d}\hat{\upvarphi}\,(\uppsi^T u^1)\uppsi^T_{,1}
 \\
 &\quad\quad\quad\quad\quad\quad+\hbox{d}\hat{\upvarphi}\,(\uppsi^T u^2)\uppsi^T_{,2}
+\hbox{d}\hat{\upvarphi}\,(\uppsi^T u^3)\uppsi^T_{,3}\big)\, \mathrm{d}\omega_{\Omega}\,,
\end{align*}}
and
\[
 {\bf D}({\bf u})= \begin{pmatrix}{\bf D}^1({\bf u})& {\bf D}^2({\bf u})&\ldots&{\bf D}^m({\bf u}) \end{pmatrix}^T\,.
\]
The discretized velocity can be obtained via the following regularized equation, which follows as the discretization of \eqref{NSE_variational}: 
\begin{equation}
\frac{\partial}{\partial t} {\bf \mathcal{ C}} {\bf u} + {\bf \mathcal{ H}}(\tilde{\bf u}) {\bf u} +{\bf \mathcal{L}} {\bf u}+{\bf \mathcal{ F}} = - {\bf \mathcal{ Q}}({\bf p})\,,
\label{SE_diff}
\end{equation}


where the smoothed velocity field, denoted by $\tilde{\bf u}$, is obtained from the original velocity field ${\bf u}$ using Leray regularization as described in \cite{guermond2004definition}; to obtain $\tilde{\bf u}$, we solve a discretized version of the equation $({\bf I }- \epsilon^2 {\bf A}) \tilde{\bf u} = {\bf u}$, where \[
{\bf A}^{\ell}_{ij} = \int_\Omega \,\psi_{i}\hat{\psi}_j
 \, \mathrm{d}\omega_{\Omega}=\int_\Omega \uppsi \hat{\uppsi}^T \, \mathrm{d}\omega_{\Omega}\,,
\]
for all $\ell=1, 2, 3$.  The smoothing parameter $\epsilon$ is a user-defined parameter that is used in the regularization process. Thus, the matrices referred to in equation \eqref{SE_diff} have a specific form as follows
\begin{align*}
{\bf \mathcal{ C}}   & = \begin{pmatrix} {\bf C} & {\bf 0} & {\bf 0} \\  {\bf 0} & {\bf C}& {\bf 0} \\ {\bf 0} & {\bf 0} & {\bf C}  \end{pmatrix}
\quad 
{\bf \mathcal{ H}}({\bf u})    = \begin{pmatrix} {\bf H}({\bf u}) & {\bf 0} & {\bf 0} \\  {\bf 0} & {\bf H} ({\bf u})& {\bf 0} \\ {\bf 0} & {\bf 0} & {\bf H} ({\bf u}) \end{pmatrix} 
\\
{\bf \mathcal{F}}  &= \begin{pmatrix} {\bf F}^1 \\ {\bf F}^2\\ {\bf F}^3\end{pmatrix} 
\quad 
{\bf \mathcal{ Q}}({\bf p})  = \begin{pmatrix} {\bf Q}({\bf p}) \\ {\bf Q}({\bf p}) 
\\ {\bf Q}({\bf p})\end{pmatrix}
\\
{\bf \mathcal{ L}}^1   &= \begin{pmatrix} {\bf L}^1_{22}+{\bf L}^1_{33} & -{\bf L}^1_{12} & -{\bf L}^1_{13} \\  -{\bf L}^1_{21} &  {\bf L}^1_{11}+{\bf L}^1_{33}& -{\bf L}^1_{23} \\ -{\bf L}^1_{31} & -{\bf L}^1_{32} &  {\bf L}^1_{11}+{\bf L}^1_{22}  
\end{pmatrix} 
\\
{\bf \mathcal{L}}^2   &= \begin{pmatrix} {\bf L}^2_{11} & {\bf L}^2_{12} & {\bf L}^2_{13} \\  {\bf L}^2_{12} & {\bf L}^2_{22}& {\bf L}^2_{23} \\ {\bf L}^2_{31} & {\bf L}^2_{32} & {\bf L^2_{33}} 
\end{pmatrix}
\\
{\bf \mathcal{ L}}&={\bf \mathcal{ L}}^1+{\bf \mathcal{L}}^2\,.
\end{align*}
with
\begin{align*}
& {\bf C}_{ij}  =   \int_{\Omega} \! \rho \, \psi_{i}  \hat{\psi}_{j}  \, \mathrm{d}\omega_{\Omega}= \int_{\Omega}  \! \rho \, \uppsi\,\hat{\uppsi}^T\mathrm{d}\omega_{\Omega}
\\
& {\bf H}_{ij}({\bf u})  = \int_{\Omega} \!\rho\, {\psi}_{i, k}  \, u^k\, \hat{\psi}_j \, \mathrm{d}\omega_{\Omega}= \int_{\Omega} \!\rho\, {\psi}_{i, k}  \, u_\ell^k\,\psi^{\ell} \,\hat{\psi}_j\,  \mathrm{d}\omega_{\Omega}
\\
 &\quad\quad\,\,\,=\int_{\Omega}\rho\, \big(\hat{\uppsi}(\uppsi^T u^1)\uppsi^T_{,1}+\hat{\uppsi}(\uppsi^T u^2)\uppsi^T_{,2}
\\
&\quad\quad\quad\quad\quad\quad\quad\quad\quad\quad\quad+\hat{\uppsi}(\uppsi^T u^3)\uppsi^T_{,3}\big)\, \mathrm{d}\omega_{\Omega}
 \end{align*}
 \begin{align*}
& {\bf L}^1_{ij}  = \int_{\Omega} \mu \,\psi_{h, i}\,\hat{\psi}_{h, j}\,\mathrm{d}\omega_{\Omega}
=\int_{\Omega} \mu \,  \mathrm{d}\uppsi(\mathrm{d}\hat{\uppsi})^T\,\mathrm{d}\omega_{\Omega}
  \\
&  {\bf L}^2_{ij}  = -2\int_{\Omega} \psi_{k, i}\, \mu_{,k}\,\hat{\psi}_{j}  \,\mathrm{d} \omega_\Omega=-2\int_{\Omega} \big(\mathrm{d} \Psi\,(\mathrm{d} \mu)^T\big)\,\hat{\uppsi}^T  \,\mathrm{d} \omega_\Omega
\\ 
& {\bf Q}_{ij}({\bf p})  =  -\int_\Omega  p_{h}\,\varphi_{h, i} \,\hat{\psi}_{j}  \, \hbox{d} \omega_\Omega = - \int_\Omega    \, \mathrm{d}p\,\hat{\uppsi}^T\,\mathrm{d} \omega_\Omega
 \\
& {\bf F}^{\ell}_{ij}  = \int_\Omega \rho\, f^\ell_i\,\hat{\psi}_j\,\mathrm{d} \omega_\Omega=\int_\Omega \rho\, f^\ell\, \hat{\uppsi}^T   \,\mathrm{d} \omega_\Omega\,,
\end{align*}
for all $\ell=1, 2, 3$.
To solve the equations \eqref{PPE_diff} and \eqref{SE_diff}, we can use an iterative approach that involves approximating the first and second-order time derivatives using the second-order forward difference formula.
\begin{align*} 
 \frac{\partial F}{\partial t} ({\bf x}; t)  = & \frac{F({\bf x}; t+h) - F({\bf x}; t)}{h}   - \frac{h^2}{2} \frac{\partial^2 }{\partial t^2} F({\bf x}; c), \\ &  \hbox{with} \quad t \leq c \leq t + h \\
 \frac{\partial^2 F}{\partial t^2} ({\bf x}; t)  = & \frac{F({\bf x}; t+2h) -2 F({\bf x}; t+h)  + F({\bf x};t)}{h^2} \\   & -\frac{h^2}{12} \frac{\partial^4 }{\partial t^4} F({\bf x}; c), \quad \hbox{with} \quad t \leq c \leq t + 2h\,, 
\end{align*}
which together imply the following recursion formulae:
\begin{align}
{\bf p}_{k}   &=  \,    \big( \Delta t^2 \,  {\bf K}  +  {\bf M} \big)^{-1} \big( \Delta t^2 \, {\bf D}({\bf u}_{k-1})   +  2 {\bf M}  {\bf p}_{k-1}   -  {\bf M}  {\bf p}_{k-2} \nonumber
\\
& +  {\bf M}  {\bf p}_{k}^\mathcal{(B)}  -  2 {\bf M} {\bf p}_{k-1}^\mathcal{(B)}  +  {\bf M}  {\bf p}_{k-2}^\mathcal{(B)} \big)\,, 
\\
{\bf u}_k  &=   {\bf u}_{k-1} + \Delta t \,  {\bf \mathcal{ C}}^{-1} \left( -{\bf \mathcal{ Q}}({\bf p}_k) - {\bf \mathcal{ H}}(\tilde{\bf u}_{k-1}){\bf u}_{k-1} - {\bf \mathcal{ L}} {\bf u}_{k-1}-{\bf \mathcal{ F}}_{k-1} \right)\, . 
\end{align}


\subsubsection{Discretized microcirculation model}

The variational discrete diffusion problem related to the system of \eqref{fick_variational} can be reformulated as follows:
\begin{itemize}
     \item[] Find $c_h\in  {\hat{\mathbb{W}}_h}$, such that, for all $ \varphi_h\in \hat{\mathbb{W}}_h$
\begin{equation}
\begin{aligned}
 d( c_h, \phi_h) =  ( s, \phi_h)\,.
\end{aligned}
\label{fick_variational_var_bar}
\end{equation}
\end{itemize}
A numerical solution ${\bf c}$ of \eqref{fick_variational_var_bar} can be obtained recursively via
\[
\label{d_fick}
\frac{\partial }{\partial t} {\bf U} {\bf c}+ {\bf R}{\bf c} + {{\bf T}}{\bf c} = {\bf w}\,,
\]
where
\begin{align*}
& {\bf U}_{ij}  =   \int_{\hat{\Omega}}  \phi_i  \hat{\phi}_j \,  \hbox{d} \omega_{\hat{\Omega}} =\int_{\hat{\Omega}} \Phi \,\Phi^T \,  \hbox{d} \omega_{\hat{\Omega}}
\\ 
& {\bf R}_{ij}  =   \int_{\hat{\Omega}}  \varsigma \lambda \,\phi_{h,i}\,  \hat{\phi}_{h,j} \,  \hbox{d} \omega_{\hat{\Omega}} =   \int_{\hat{\Omega}}  \varsigma \lambda \,\hbox{d}\Phi\, (\hbox{d} \hat{\Phi} )^T\,  \hbox{d} \omega_{\hat{\Omega}} 
\\ 
&{\bf T}_{ij}  =  \int_{\hat{\Omega}}   \varepsilon \,\phi_i \, \hat{\phi}_j \,  \hbox{d} \omega_{\hat{\Omega}}=  \int_{\hat{\Omega}}  \varepsilon\,  \Phi \, \hat{\Phi}^T \,  \hbox{d} \omega_{\hat{\Omega}}
\\
&{\bf w}_{i}  =  \int_{ \hat{\Omega}}  s \,\phi_i \,  \hbox{d} \omega_{\hat{\Omega}}=  \int_{ \hat{\Omega}}  s \,\Phi \,  \hbox{d} \omega_{\hat{\Omega}}\,.
\end{align*}

The time iteration of the matrix equation above is expressed as follows:
\begin{equation}
{\bf c}_k =  ({\bf U} +  \Delta t \,  {\bf R} + \Delta t \, {\bf T})^{-1}  ({\bf U} {\bf c}_{k-1} + \Delta t \, {\bf w } )\,.
\label{cv}
\end{equation}

  
\subsection{Simulated incoming pulse wave}
\label{sec:pulse}

To simulate the incoming boundary pulse distribution $p^{({\bf \mathcal{B}})}$, we apply a three-component wave model \cite{nagasawa2022blood}  of the form 
\begin{equation*}
    p^{(\mathcal{B})}({\bf x}; t) \! = \!  \mathcal{A}  \alpha_P  w \left(\frac{t -  t_P}{L_P/\mathsf{c}} \right) \! + \! \mathcal{A}  \alpha_T  w \left(\frac{t -  t_T}{L_T/\mathsf{c}} \right)  \! +  \! \mathcal{A}  \alpha_D  w \left(\frac{t -  t_D}{L_D/\mathsf{c}} \right) 
\end{equation*}
in ${\bf x} \in  \mathcal{S}$ and $p^{(\mathcal{B})}({\bf x}; t) = 0$, otherwise.
In the above equation, $w(t)$ is a smooth window function on the interval $0 < t < 1$ that is periodic, such that $w(t) = w(t + 1)$ for any $t$. $\mathcal{A}$ is a normalizing amplitude, while $\mathsf{c}$ represents the cycle length. $\alpha_P$, $\alpha_T$, and $\alpha_D$ denote the weights of the percussion, tidal, and dicrotic wave components, respectively, while $L_P$, $L_T$, and $L_D$ represent their durations, and $t_P$, $t_T$, and $t_D$ denote their start times. We use the Blackman-Harris window as the pulse form $w(t)$.
\\
The spatial support, denoted as $\mathcal{S}$, for $p^{(\mathcal{B})}({\bf x}; t)$ corresponds to two spheres with radii of 10 and 3 mm, feeding pressure in the internal carotid and basilar arteries  in the front of the brainstem. We chose the following parameter values: $\alpha_P = 0.50$, $w_T = 0.30$, $w_D = 0.25$, $L_P = 0.55$, $L_T = 0.55$, $L_D = 0.60$, $t_P = 0.05$, $t_T = 0.20$, and $t_D = 0.38$. We used two alternative cycle lengths, $\mathsf{c} = 1$ and $\mathsf{c} = 0.75$ s, i.e., heart rates of 60 and 80 bpm. The normalizing amplitude $\mathcal{A}$ was set to match the total pulse pressure of 50 mmHg. The resulting two boundary pulses are shown in Figure \ref{fig:boundary_pulse} and their spatial support  $\mathcal{S}$ in Figure \ref{fig:domain}.

\begin{figure}[h!]
    \centering
    \begin{scriptsize}
    \includegraphics[width=8cm]{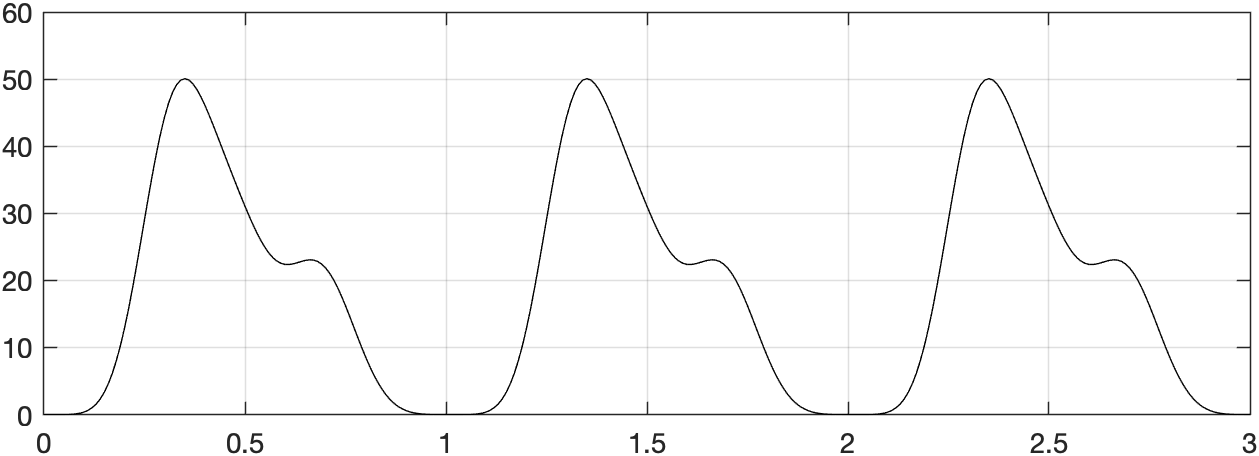} \\ \vskip0.2cm 60 bpm \\ \vskip0.2cm
    \includegraphics[width=8cm]{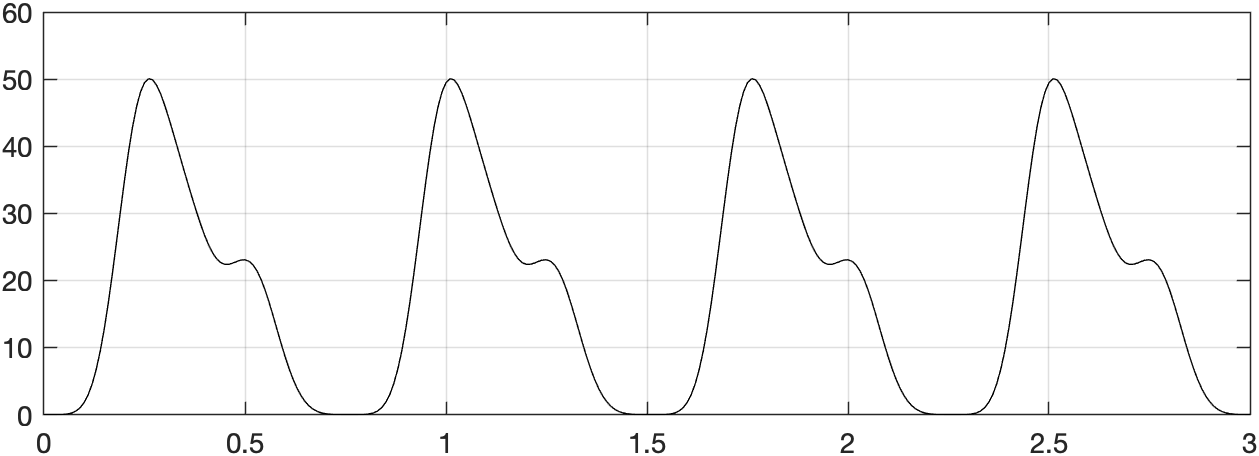} \\
    80 bpm
    \end{scriptsize}
    \caption{The boundary pulse function $p^{(\mathcal{B})}({\bf x}; t)$ for all ${\bf x} \in  \mathcal{S}$ obtained as a  sum of simulated percussion, tidal and dicrotic wave components. Two alternative heart rates 60 and 80 bpm are shown (top and bottom, respectively). }
    \label{fig:boundary_pulse}
\end{figure}
 
\subsection{Blood viscosity}
\label{sec:Mechanical characteristics of blood}

In accordance with Newton's law of viscosity, variations in blood flow geometry or velocity may lead to significant changes in blood viscosity. Blood is a non-Newtonian fluid, which does not follow Newton’s law, and, thus, its viscosity is not constant and can change according to a variety of variables, including the shear rate $\mu=\mu(\dot{\gamma})$.  Therefore, we define the kinematic viscosity $\mu$, [$\mu$] = Pa s (SI unit), as a variable of the blood flow. 


\subsubsection{Strain, shear stress and stress tensors}

Viscosity is closely related to stress, which is the physical quantity determining the magnitude of the forces causing deformation. The strain tensor describes the deformation of fluid in response to applied stress and is a mathematical construct for an order-two symmetric tensor that expresses the rate of deformation in 3D and has the form \eqref{eq:strain_rate}, $[{\bf S{\bf u}}] = s^{-1}$ (SI unit). The rate at which a fluid is being sheared or deformed while it flows is known as the shear rate $\dot{\gamma}$ and is a scalar quantity of the rate of deformation (strain) tensor, which is typically represented as follows:
\begin{align*}
\dot{\gamma} :=\sqrt{\frac{1}{2}\mathsf{tr}({\bf Su})}\,,
    \label{}
\end{align*}
where
\[
\mathsf{tr}({\bf Su})={\bf Su}:{\bf Su}=(\nabla {\bf u}+(\nabla {\bf u})^T)^{ij}(\nabla {\bf u}+(\nabla {\bf u})^T)_{ji}\,.
\]
The amount of force per area required for a fluid to flow is measured by shear stress. It represents the amount of stress being applied to the fluid and is also represented by an order-two tensor, ${\pmb\tau}{\bf u}=\mu {\bf S{\bf u}}$, $[{\pmb\tau}{\bf u}] =$ Pa (SI unit).

The distribution of stresses within a fluid is described by mathematical structures known as stress tensors, which are defined as ${\bf\sigma}({\bf u}, p)=-p{\bf I}+\mu {\bf S{\bf u}}$ an order-two tensors, $[{\bf\sigma}({\bf u}, p)] =$ Pa (SI unit). We can understand more about the fluid's behavior under different conditions and how it may react to external forces in its surroundings by analyzing the stress tensor.




One common approximation of the zero-shear-rate viscosity, where the shear force is absent, and Infinite-shear-rate viscosity which describes the viscosity of a fluid under an infinite shear force or stress are as follows:
\begin{align*}
   \mu_{0}&= \mathsf{lim}_{\dot{\gamma}\rightarrow 0} \,\mu(\dot{\gamma})= 56\times 10^{-3}\, \hbox{Pa s}\,,
\\   
   \mu_{\infty}&= \mathsf{lim}_{\dot{\gamma}\rightarrow \infty} \,\mu(\dot{\gamma})= 3.45\times 10^{-3}\,\hbox{Pa s}\,.
\end{align*}

\subsubsection{Carreau-Yasuda model} 
\label{sec:carreau_yasuda_model}

Given the shear rate $\dot{\gamma}$, Carreau-Yasuda model \cite{cho1991effects,weddell2015hemodynamic} can be applied to define the viscosity of non-Newtonian fluids as follows:
$$
 \eta(\gamma)=\mu_{\infty}+\left(\mu_0-\mu_{\infty}\right)\left(1+(\uplambda\, |\dot{\gamma}|)^\upalpha\right)^{(\mathsf{n}-1) / \upalpha}
$$
where $\uplambda$ is a relaxation time, $\mathsf{n}$ is a power law index, and $\alpha$ is a transition parameter. In this study, we apply the Carreau-Yasuda model by selecting these parameters as suggested in \cite{weddell2015hemodynamic}, i.e., $\uplambda = 1.902$, $0.22$, $1.25$. In the numerical discretization, the Carreau-Yasuda model was implemented using the linear Lagrangian finite element basis functions and the (Leray) smoothing scheme described in Section \ref{sec:discretized_blood_flow} to obtain a numerically stable solution.


\subsection{Archie's law}
\label{sec:archies_law}

To estimate the effect of the volumetric blood concentration on the conductivity distribution, we apply Archie's law,  which approximates the effective electrical conductivity $\sigma$ for a two-phase mixture of fluid and inhomogeneous medium as follows \cite{j2001estimation,peters2005electrical,glover2000modified,cai2017electrical}:
\begin{equation}
\label{archie}
 \sigma =\sigma_m ( 1 - c )^\tau+\sigma_f {c}^{\beta} \text { with } \tau=\frac{\log \left(1-c^\beta \right) }{ \log (1-c) }\,, 
\end{equation}
Here $\sigma_f$ and $\sigma_m$ are  fluid and medium conductivities, respectively, and $\beta$ is a cementation factor \cite{j2001estimation,glover2000modified}, which for the cerebral cortex has been suggested to satisfy $3/2 < \beta < 5/3$ \cite{j2001estimation}. The lower and upper limit for $\beta$  follow from spherical  and cylindrical inhomogeneities, which in the  cortex  are represented by the somas and dendrites of the pyramidal cells, respectively. Since we assume the uncertainty of the concentration model be greater than that of the Archie's law, we restrict the present investigation to the upper limit $\beta = 5/3$ which gives the lower conductivity out of the two limits.

\subsection{Head model segmentation}
\label{sec:head_model}

\begin{table}[!ht]
    \centering
    \caption{Compartments of the head model segmentation applied in the numerical experiments. The piecewise constant conductivity values of the compartments were based on  \cite{dannhauer2010}  associating the subcortical nuclei with the conductivity of the grey matter \cite{rezaei2021reconstructing} and the vessel conductivity with that of the blood \cite{gabriel1996compilation}. 
    }
    \begin{footnotesize}
    \begin{tabular}{llr}
    \hline 
        Compartment   &  Segmentation method & $\sigma_m$ (S m\textsuperscript{-1}) \\
        \hline
 Blood vessels & Vesselness &  0.70\\
    Grey matter & FreeSurfer &   0.33 \\
White matter  &   "& 0.14 \\
Cerebellum cortex & " & 0.33 \\
Cerebellum white matter & "  & 0.14 \\
Brainstem & " & 0.33 \\ 
Cingulate cortex & " & 0.14 \\ 
Ventral Diencephalon &" & 0.33 \\
Amygdala & " & 0.33 \\ 
Thalamus &  "& 0.33 \\
       Caudate &" & 0.33 \\  
              Accumbens & " & 0.33 \\ 
    Putamen & " & 0.33 \\
     Hippocampus & "& 0.33 \\ 
 Pallidum & " & 0.33 \\ 
  Ventricles & "& 0.33 \\ 
       Cerebrospinal fluid (CSF) & FieldTrip-SPM12 & 1.79 \\    
                         \hline                     
    \end{tabular}
    \end{footnotesize}
    \label{tab:segmentation}
\end{table}

The head model of this study was created using the open sub-millimeter precision Magneto Resonance Imaging (MRI) dataset\footnote{doi:10.18112/openneuro.ds003642.v1.1.0} of CEREBRUB-7T \cite{svanera2021cerebrum}.  Due to the dataset being acquired using 7 Tesla fields, arterial vessels can be separated as individual compartments, as shown in \cite{fiederer2016}. The segmentation of the head was found
applying the FreeSurfer Software Suite \cite{fischl2012freesurfer}, FieldTrip's \cite{oostenveld2011} interface for the SPM12 surface extractor \cite{ashburner2014spm12}, and the Vesselness algorithm \cite{van2014scikit,frangi1998multiscale,fiederer2016}, the last one of which found the vessel segmentation. Altogether, 16 brain compartments enclosed by the skull were included in the microcirculation domain (Table \ref{tab:segmentation}). The segmentation is shown in Figure \ref{fig:domain}. 

\subsubsection{Vessel extraction}
\label{sec:vessels}

\begin{figure*}[h!]
\begin{footnotesize}
    \centering
    \begin{minipage}{3.0cm}
    \centering
    \includegraphics[width=3cm]{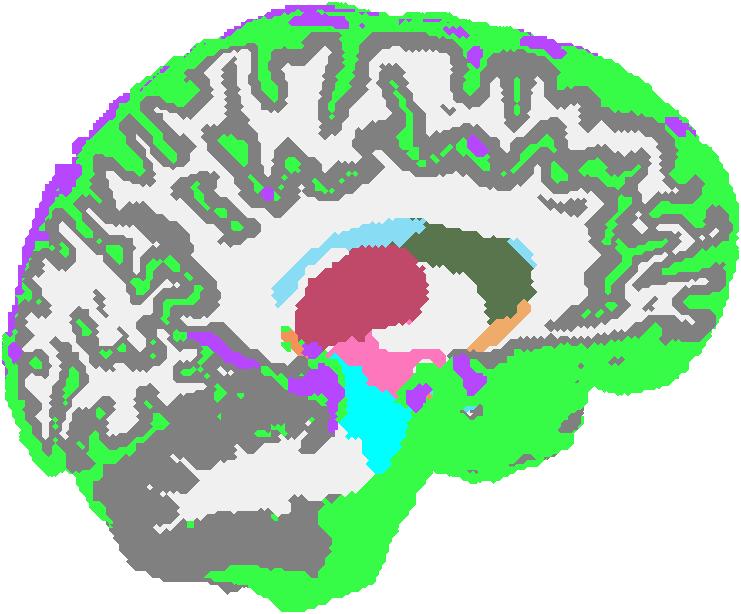} \\
   Domain, sagittal
\end{minipage}
 \begin{minipage}{3.0cm}
    \centering
        \includegraphics[width=2.6cm]{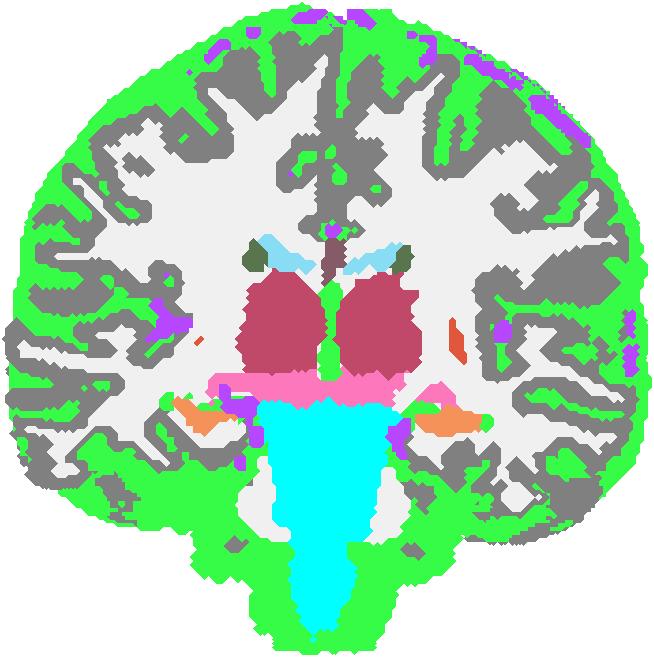} \\
        Domain, coronal
        \end{minipage}
        \begin{minipage}{3.0cm}
    \centering
        \includegraphics[width=2.1cm]{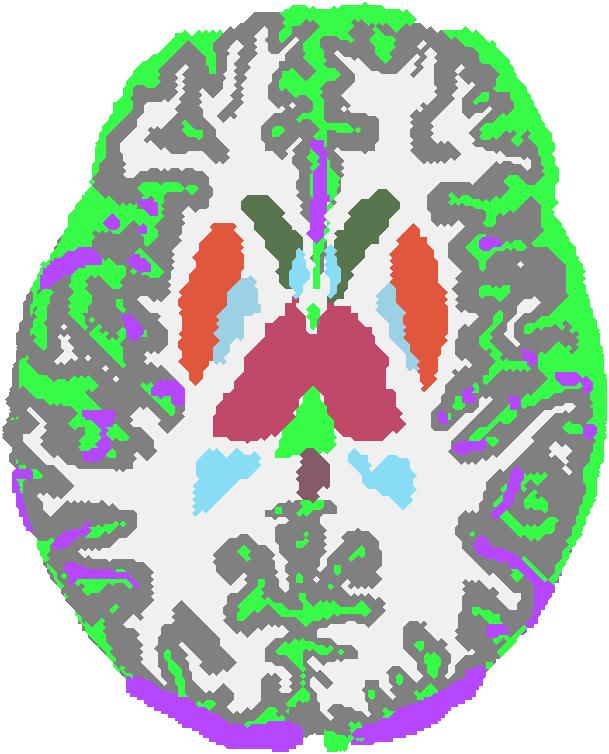} \\
        Domain, axial
        \end{minipage}  
                \begin{minipage}{3.0cm}
    \centering
            \includegraphics[width=3.0cm]{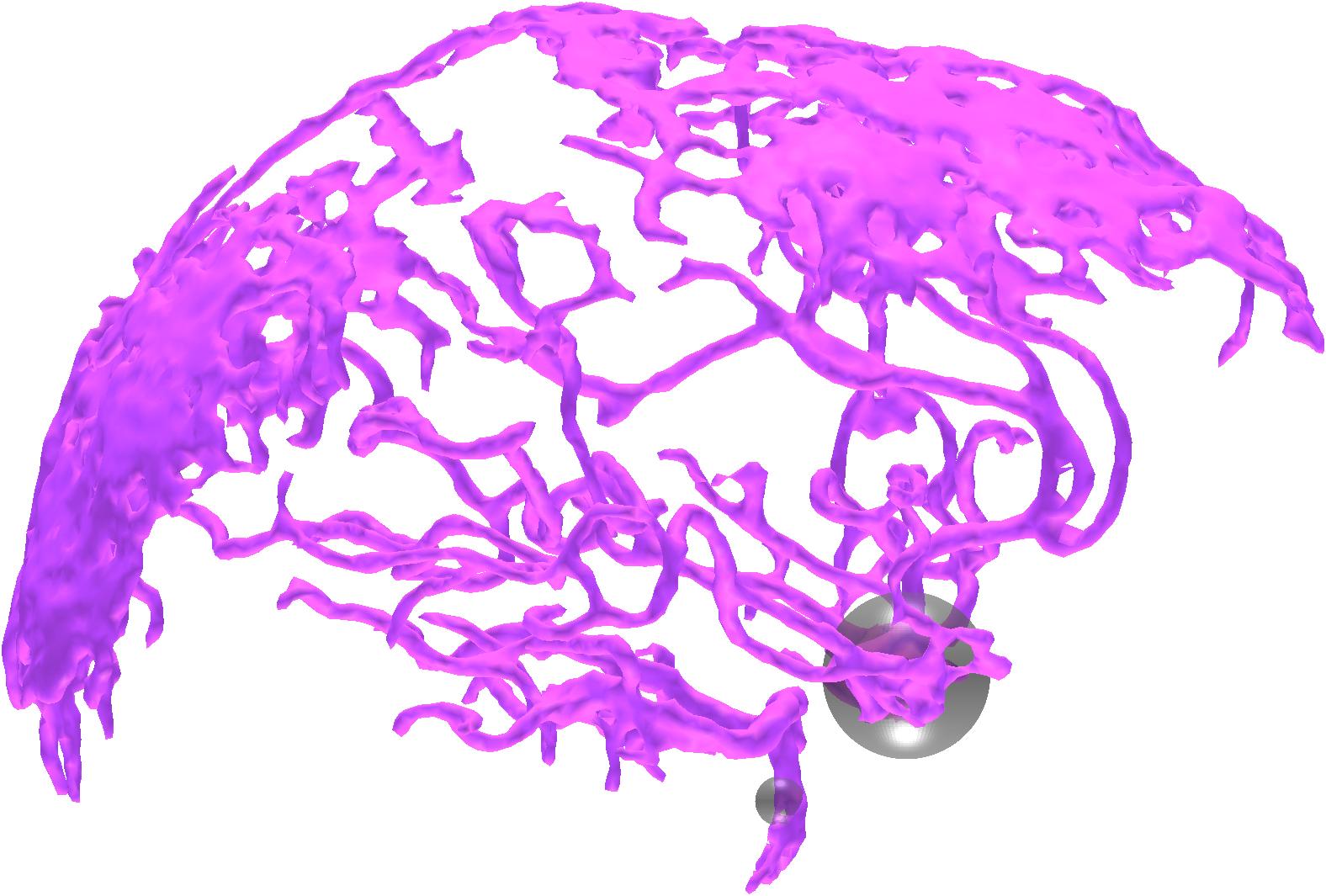} \\ Vessels, sagittal
            \end{minipage} 
                            \begin{minipage}{3.0cm}
    \centering
            \includegraphics[width=2.8cm]{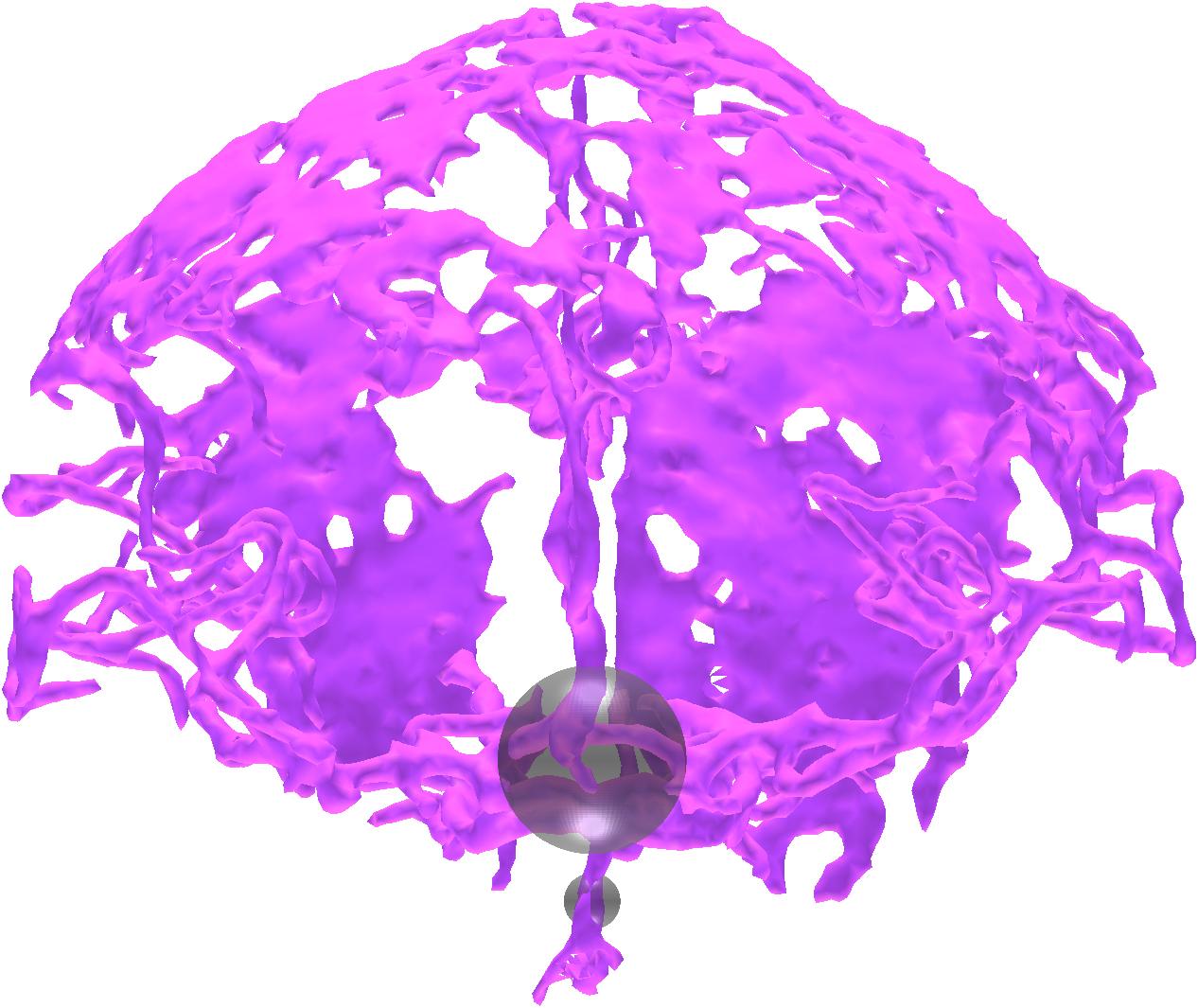} \\ Vessels, sagittal
            \end{minipage} \\
    \end{footnotesize}
    \caption{{\bf 1st to 3rd from left:} The computing domain consists of multiple brain compartments, of which the outermost one (green) depicts cerebrospinal fluid (CSF) enclosing vessels (violet), grey (grey), and white (white) matter compartments.  The other compartments correspond to subcortical nuclei. {\bf 4th and 5th from left:} A sagittal and coronal view of the arterial vessel compartment (violet) with two spherical sets depicting the spatial support $\mathcal{S}$ of the incoming pulse. } 
    \label{fig:domain}
\end{figure*}

The discretized arterial vessels are shown in Figure \ref{fig:domain}.Inspired by Choi {\em et al.} \cite{choi2020cerebral}, the Frangi filter was first applied to the MRI data slicewise. A final arterial model was created by aggregating the results. The process consisted of three steps:

\begin{enumerate}
\item  Frangi's algorithm was applied to both the preprocessed INV2 and T1w slices of the dataset separately. Both the preprocessed INV2 and T1w slices were subjected to Frangi's algorithm independently. The Frangi method was implemented using different parameters for each slice in Scikit-Image's \cite{van2014scikit}  package.
\item A mask was created by applying the filter to INV2 and T1w data and superposing the resulting two layers voxelwise.  Each voxel below a user-defined threshold level was set to zero, and those above it were set to one.
\item  The previous two steps were iterated for each MRI slice with a given orientation, and the results were aggregated.  Separate masks were obtained with sagittal, axial, and coronal sets of slices, labeling a voxel as a vessel if the sum of these masks was greater than or equal to two.
\end{enumerate}



\subsection{Numerical experiments}

\begin{table}[!h]
    \centering
        \caption{Gravitation has been set to be parallel to the z-axis. The conductivity of the blood $\sigma_f$ and the reference pressure were selected according to\cite{gabriel1996compilation} and \cite{blanco2017blood}, respectively. Blood density $\rho$,  $\mu$, total cerebral blood flow $Q$, pressure decay in arterioles $\vartheta$, diameters $D_a$, $D_c$ and $D_v$ of arterioles, capillaries, and venules (subtracting the total wall thicknesses, 2.0E-5, 2.0E-06, and 2E-6, respectively), and the relative total area fractions $\gamma_a$, $\gamma_c$ and $\gamma_v$, respectively, are based on  \cite{tu2015human,caro2012mechanics}. Microvessel densities have been suggested in \cite{kubikova2018numerical},  the cementation factor in \cite{j2001estimation}, and the viscosity parameters in \cite{weddell2015hemodynamic}. The smoothing parameters were chosen to be the smallest possible, which allowed obtaining a numerically stable NSE-solution.}
    \begin{footnotesize}
    \begin{tabular}{lllr}
    \hline
        Property & Param.   & Unit & Value\\
        \hline
        Gravitation (z-component) & $f^{(3)}$ &  m s\textsuperscript{-2} & -9.81\\
         Blood conductivity & $\sigma_f$ & S m\textsuperscript{-1} & 0.70  \\
        Average pressure difference & $\overline{p}$ & mmHg & 87  \\
        Arteriole length & $L$ & mm & 0.4 \\
         Blood density & $\rho$ & kg m\textsuperscript{-3} & 1050  \\
         Viscosity & $\mu$ & m\textsuperscript{2} Pa s & 4.0E-03 \\ 
         Total cerebral blood flow & $Q$ & ml min\textsuperscript{-1} & 750 \\
         Pressure decay in arterioles & $\vartheta$ & \% & 70 \\
         Arteriole diameter  & $D_a$ & m & 1.0E-05 \\
          Capillary diameter & $D_c$ & m & 7.0E-06 \\
           Venule diameter & $D_a$ & m & 1.8E-05 \\
        Arteriole total area fraction & $\gamma_a$ & \% & 25 \\
      Capillary area fraction & $\gamma_c$ & \% & 50 \\
    Venule area fraction & $\gamma_v$ & \% & 25 \\
    Microvessels in cerebral GM & $\xi$ & m\textsuperscript{-2} & 2.4E08 \\
    Microvessels in cerebral WM & $\xi$ & m\textsuperscript{-2} & 1.4E08 \\
    Microvessels in cerebellar GM & $\xi$ & m\textsuperscript{-2} & 3.0E08 \\
    Microvessels in cerebellar WM & $\xi$ & m\textsuperscript{-2} & 1.0E08 \\
    Microvessels in subcortical WM & $\xi$ & m\textsuperscript{-2} & 1.5E08 \\
     Microvessels in brainstem & $\xi$ & m\textsuperscript{-2} & 2.9E08 \\
       Cementation factor & $\beta$ & None & 5/3 \\
       Time step (s) & 2E-03 \\
       Leray regularization & $\epsilon$ & None & 0.03 \\ 
       Viscosity smoothing & $\epsilon$ & None & 0.01 \\
      Viscosity relaxation time & $\uplambda$ & s & 1.902\\
      Viscosity power law index & $\mathsf{n}$ & None & 0.22 \\
       Viscosity transition parameter & $\alpha$ & None & 1.25 \\ 
      \hline                        
    \end{tabular}
    \end{footnotesize}
    \label{tab:parameters}
\end{table}

We performed numerical experiments on modelling the dynamical blood conductivity via Archie's model (Section \ref{sec:archies_law}) applied to the volumetric blood concentration estimates given by Fick's law (Section \ref{sec:ficks_law}) together with the dynamic NSE solution for the cerebral blood circulation (Section \ref{sec:dynamic_form}). We used the present head model (Section \ref{sec:head_model}),  vessel segmentation (Section \ref{sec:vessels}), pulse form (Section \ref{sec:pulse}), and the parameter values described in Table  \ref{tab:parameters}. The smoothing and regularization parameters were set to be approximately at the minimal level required for numerical stability. In order to parallelize the spatio-temporal NSE system effectively, it was solved numerically using a graphics processing unit (GPU) Quadro RTX 8000 with 48 GB of random-access memory. Each simulation run took less than 30 minutes of total processing time and covered a simulated time interval of 6 s of which the first 3 s in the beginning were filtered out as a burn-in phase in which the amplitudes of the blood flow and viscosity per cardiac cycle adapt towards their long-term tendencies. The spatio-temporal data was stored for 300 equispaced temporal points between 3 and 6 s, i.e., for a step length of 0.01 s.  Our numerical solver implementation has been included as a part of the open Zeffiro Interface code package \cite{he2019zeffiro}.

\section{Results} 

The results have been included in Figures \ref{fig:arteries_60bpm}-- \ref{fig:conductivity_80bpm} which show the mean, peak, and standard deviation (STD)  for the simulated time-lapse of distributions. The results for pressure, velocity, and viscosity corresponding to the heart rates of 60 and 80 bpm can be found in Figures \ref{fig:arteries_60bpm} and \ref{fig:arteries_80bpm},   for concentration in Figures \ref{fig:concentration_60bpm} and \ref{fig:concentration_80bpm}, and for conductivity in Figures \ref{fig:conductivity_60bpm} and \ref{fig:conductivity_80bpm}. To filter out outliers, the peak values of pressure, velocity, and viscosity were set to be the 90 \% quantile of the distribution obtained.

For 60 bpm heart rate, the mean pressure, velocity, and viscosity over the simulated time interval vary spatially in the ranges 90--98 mmHg, 0--0.26 m/s, and 0.0035--0.0059 Pa s. For the peak, these ranges are 96--98 mmHg, 0--0.82 m/s, and 0.0040--0.0125 Pa s, and for STD, these ranges are 0--6.3 mmHg, 0--0.20 m/s, and 0--0.0042 Pa s, respectively.

For 80 bpm heart rate, the mean ranges are 90--98 mmHg, 0--0.53 m/s, and 0.0035--0.0052 Pa s. Compared to the lower heart rate, the pressure range is virtually the same, the velocity is elevated due to more blood flow, and the viscosity is somewhat lowered as a consequence of the greater velocity. The peak ranges are 96--127 mmHg, 0--1.4 m/s, and 0--0.0084  Pa s, and STD ranges 0--15.2 mmHg, 0--0.35 m/s, and 0--0.0032 Pa s. Compared to the lower heart rate, these show similar tendencies as in the case of the mean, but with a stronger elevation in pressure.

The mean, peak, and STD of volumetric blood concentration predicted by Fick's law can be found in Figures \ref{fig:concentration_60bpm} and \ref{fig:arteries_80bpm} for the heart rates of 60 and 80 bpm, respectively. In both cases, the concentration can be observed to decay to zero at a distance of 1–2 cm from the arterial blood vessels. The concentration is one or close to one in the vicinity of the vessels. STD differs from zero outside this region, varying between 0-0.05. The peak concentration is slightly higher than the mean. A heart rate of 80 bpm yields a slightly greater overall concentration than 60 bpm.

The mean, peak, and STD of conductivity following from the concentration distributions via Archie's law are shown in Figures \ref{fig:conductivity_60bpm} and \ref{fig:conductivity_80bpm} for the heart rates of 60 and 80 bpm, respectively. The effect of elevated concentrations is clearly visible in these distributions. The effect of the concentration is slightly more spread at the peak as compared to the mean. STD varies in the range 0-0.55 S/m corresponding to the areas of concentration variation. For the 80 bpm heart rate, the conductivity close to the arterial blood vessels is, overall, slightly lifted as compared to the case of 60 bpm.

\label{sec:Results}

\begin{figure}[h!]
\begin{footnotesize}
    \centering
    \begin{minipage}{2.5cm}
    \centering
    \includegraphics[width=2.5cm]{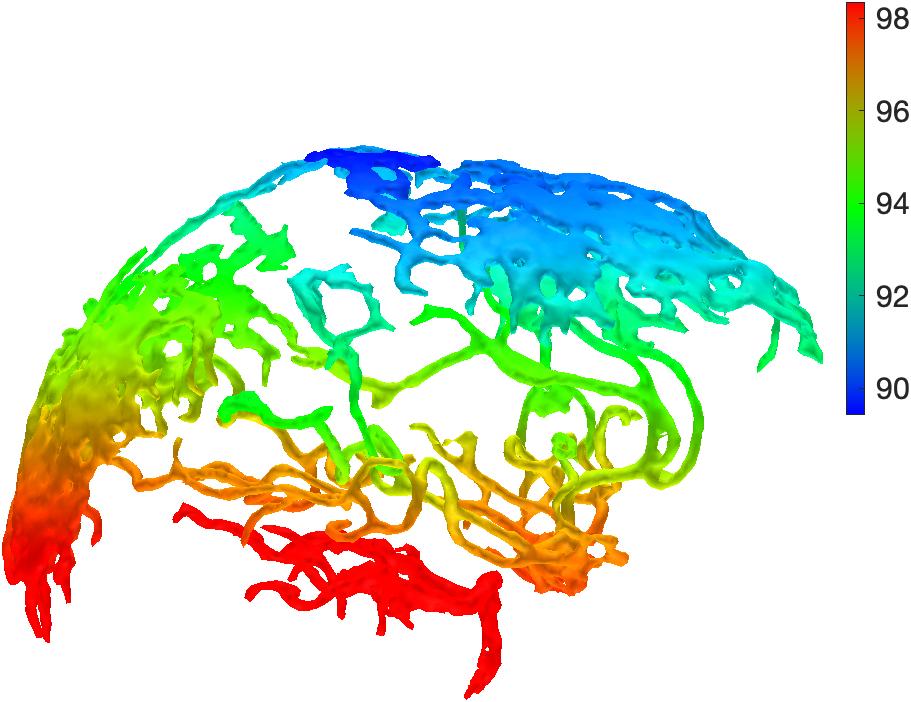} \\
    Mean pressure
\end{minipage}
 \begin{minipage}{2.5cm}
    \centering
        \includegraphics[width=2.5cm]{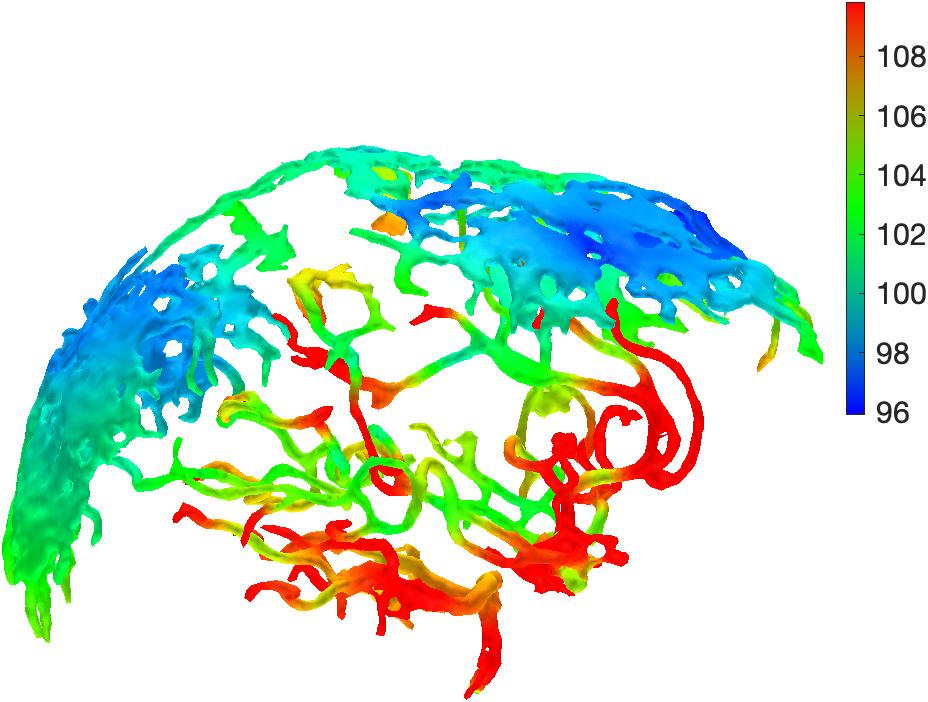} \\
        Peak pressure
        \end{minipage}
        \begin{minipage}{2.5cm}
    \centering
        \includegraphics[width=2.5cm]{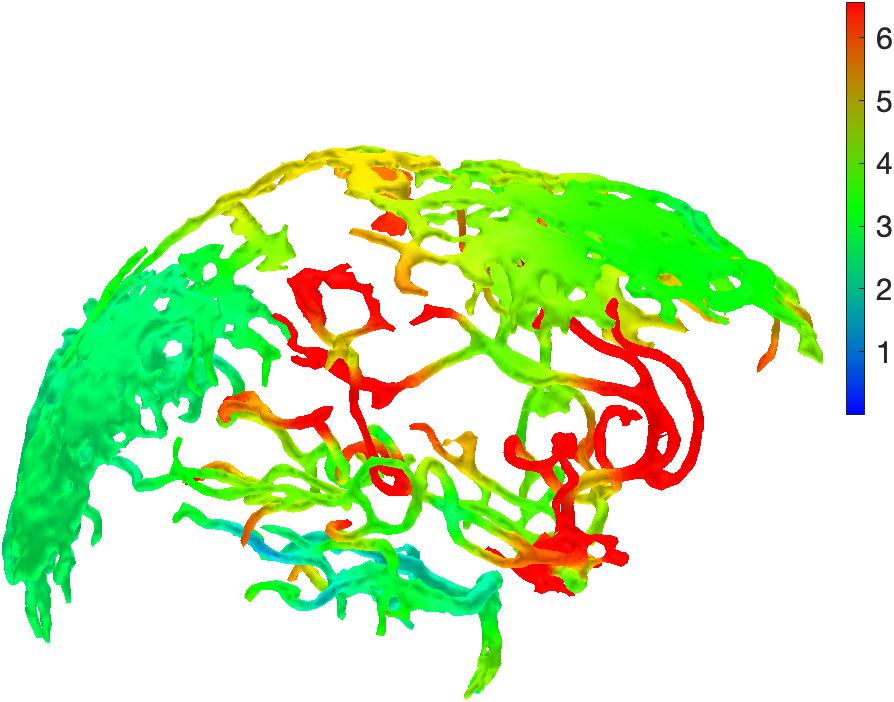} \\
        STD pressure
        \end{minipage} \\ \vskip0.2cm 
                \begin{minipage}{2.5cm}
    \centering
            \includegraphics[width=2.5cm]{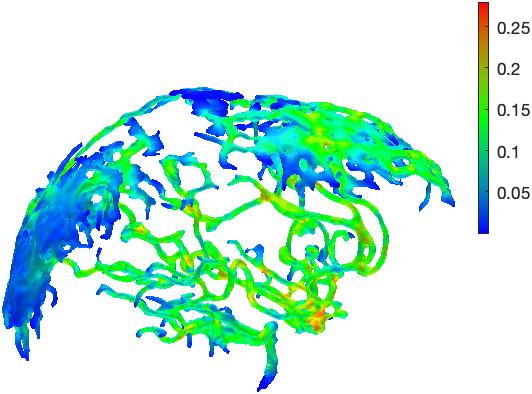} \\ Mean velocity
            \end{minipage} 
             \begin{minipage}{2.5cm}
    \centering
        \includegraphics[width=2.5cm]{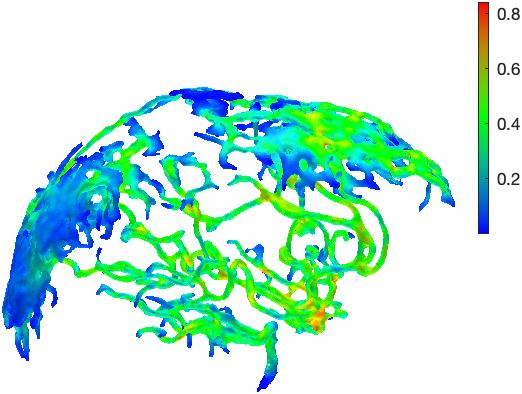} \\
        Peak velocity
        \end{minipage}
\begin{minipage}{2.5cm}
    \centering
        \includegraphics[width=2.5cm]{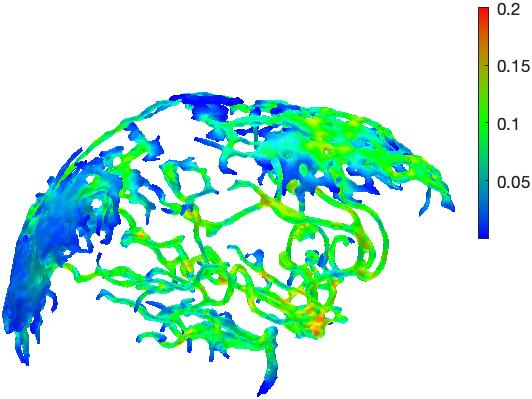} \\
        STD velocity
        \end{minipage} \\ \vskip0.2cm
            \begin{minipage}{2.5cm}
    \centering
            \includegraphics[width=2.5cm]{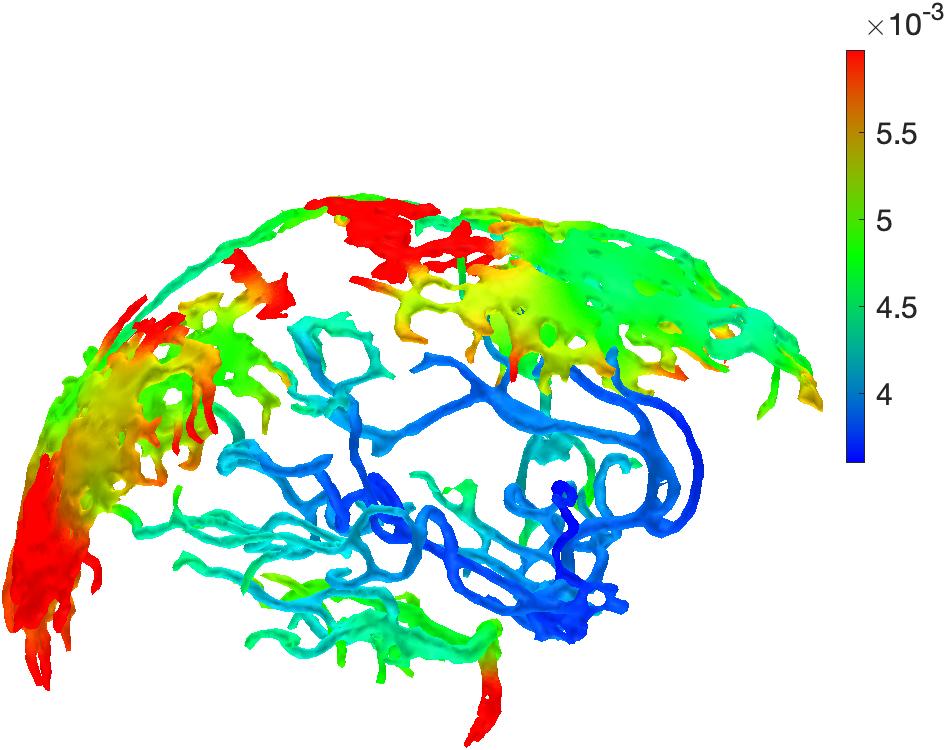} \\ Mean viscosity
            \end{minipage} 
             \begin{minipage}{2.5cm}
    \centering
        \includegraphics[width=2.5cm]{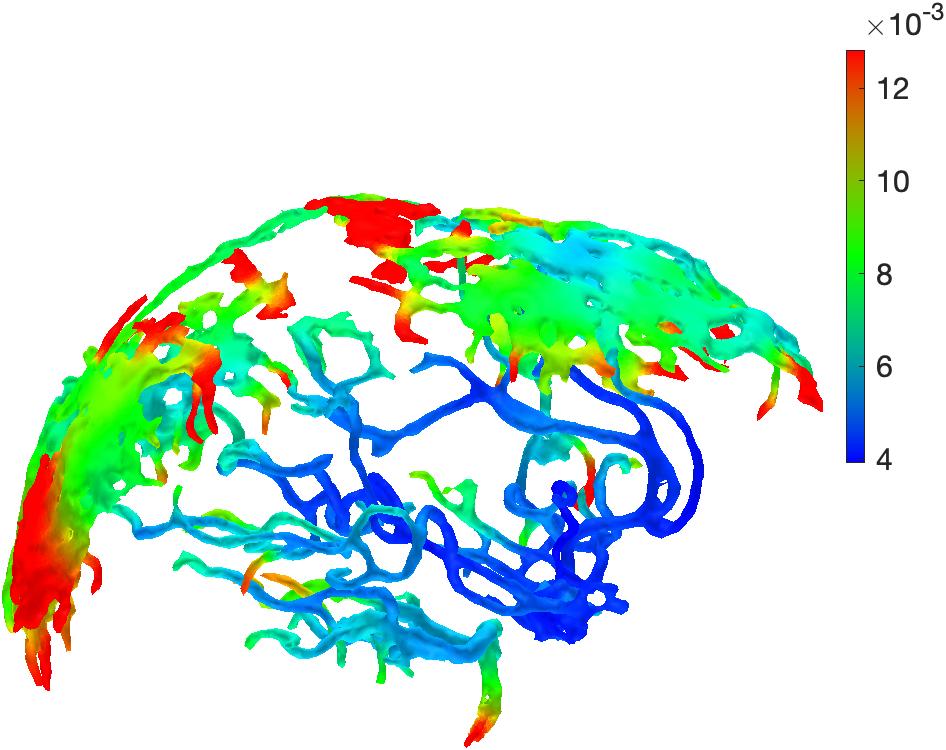} \\
        Peak viscosity
        \end{minipage}
\begin{minipage}{2.5cm}
    \centering
        \includegraphics[width=2.5cm]{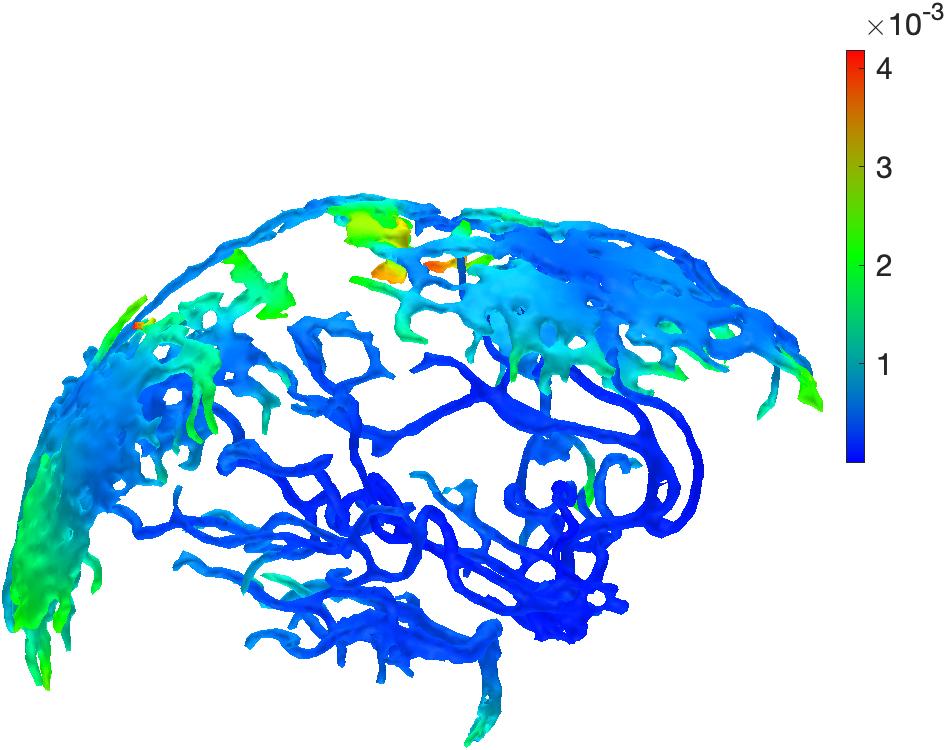} \\
        STD viscosity
        \end{minipage} \\ 
    \end{footnotesize}
    \caption{Mean, peak and standard deviation  (STD) estimates of  blood pressure, velocity, and viscosity in arterial circulation for 60 bpm heart rate.}
    \label{fig:arteries_60bpm}
\end{figure}

\begin{figure}[h!]
\begin{footnotesize}
    \centering
    \begin{minipage}{2.5cm}
    \centering
    \includegraphics[width=2.5cm]{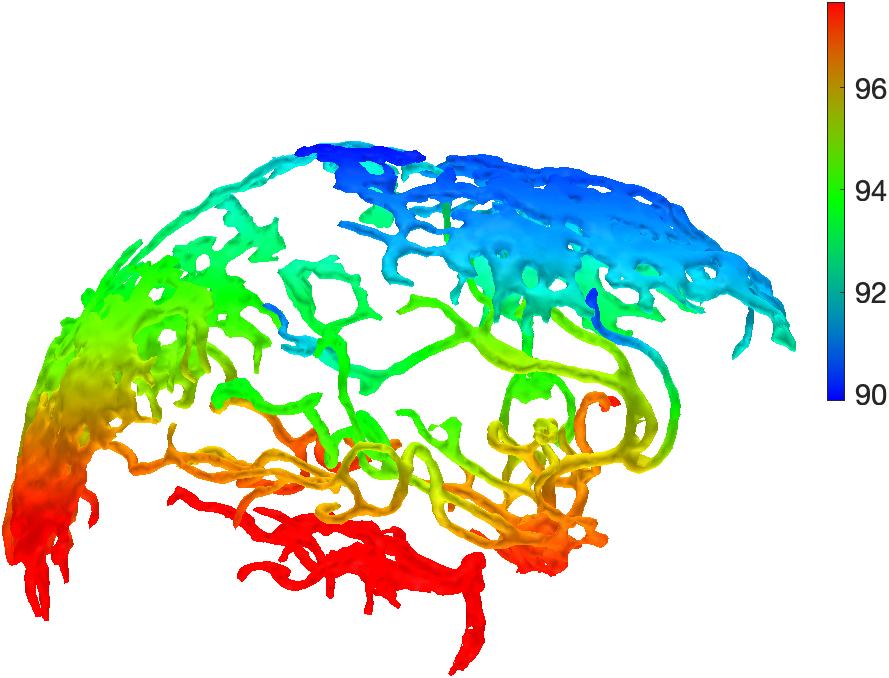} \\
    Mean pressure
\end{minipage}
 \begin{minipage}{2.5cm}
    \centering
        \includegraphics[width=2.5cm]{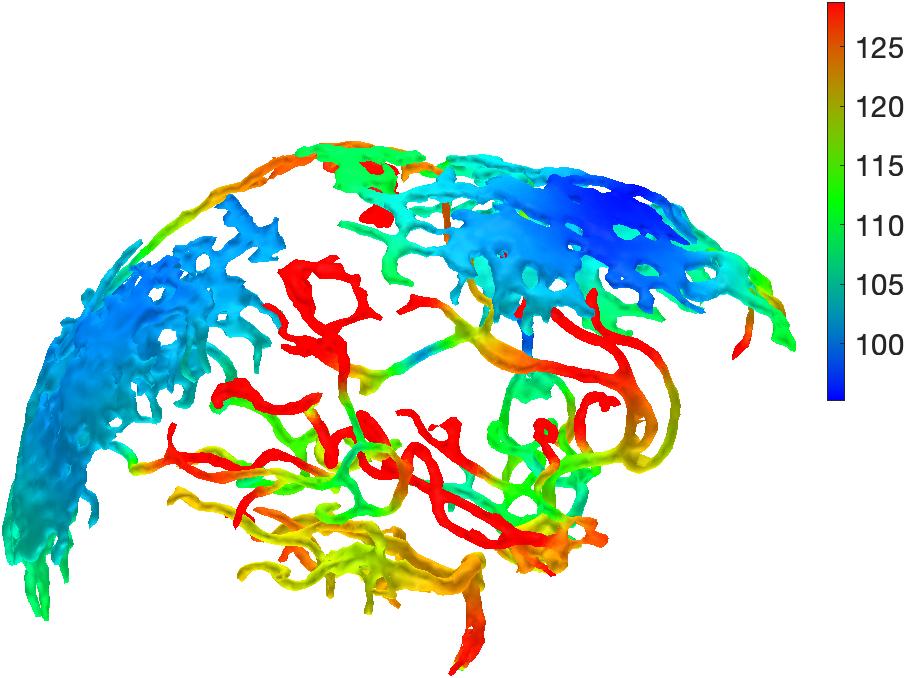} \\
        Peak pressure
        \end{minipage}
        \begin{minipage}{2.5cm}
    \centering
        \includegraphics[width=2.5cm]{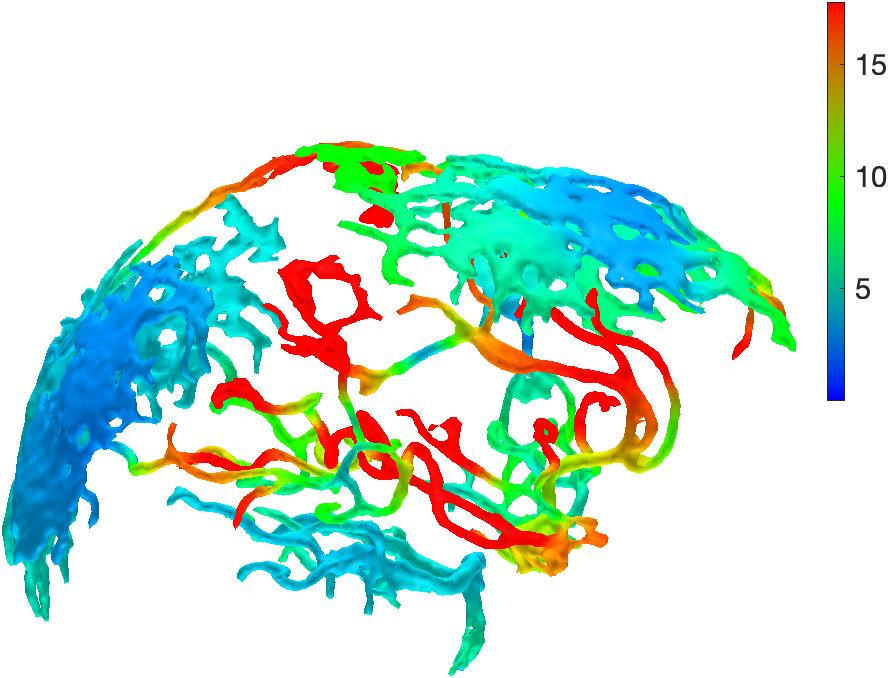} \\
        STD pressure
        \end{minipage} \\ \vskip0.2cm 
                \begin{minipage}{2.5cm}
    \centering
            \includegraphics[width=2.5cm]{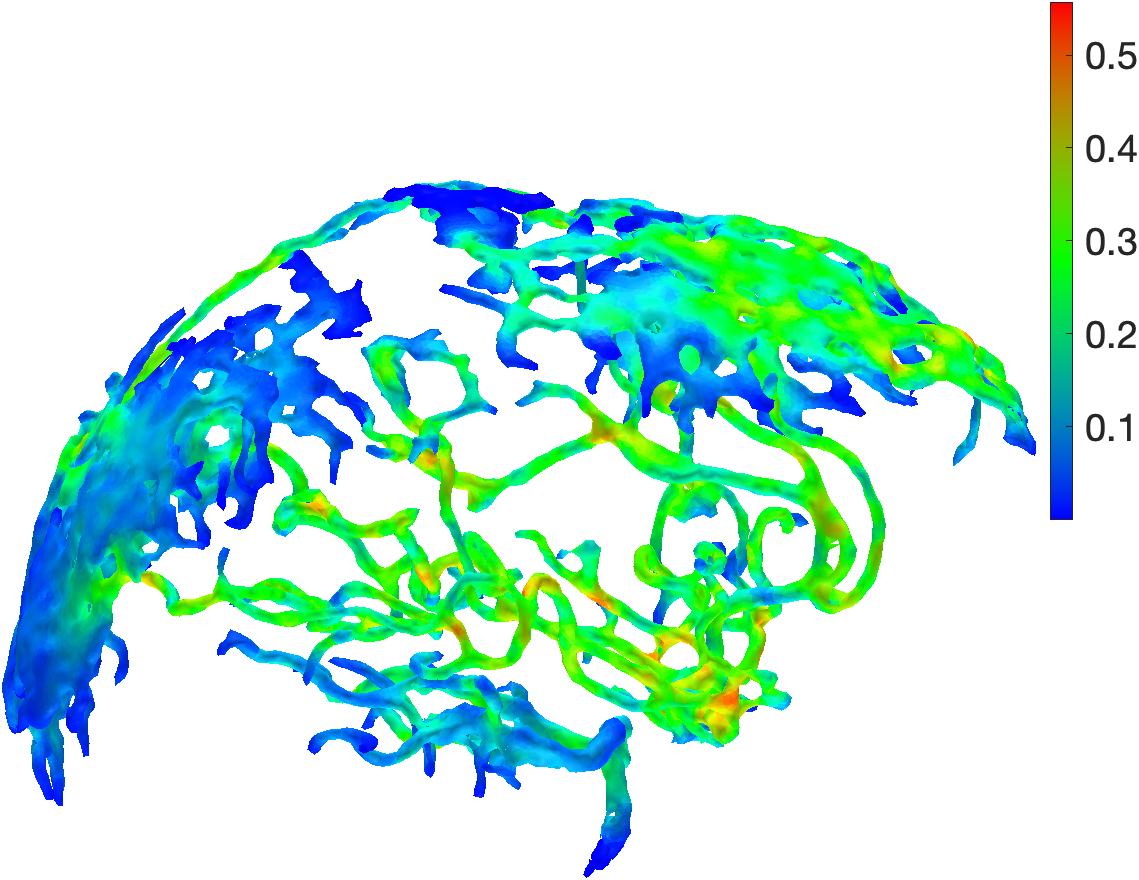} \\ Mean velocity
            \end{minipage} 
             \begin{minipage}{2.5cm}
    \centering
        \includegraphics[width=2.5cm]{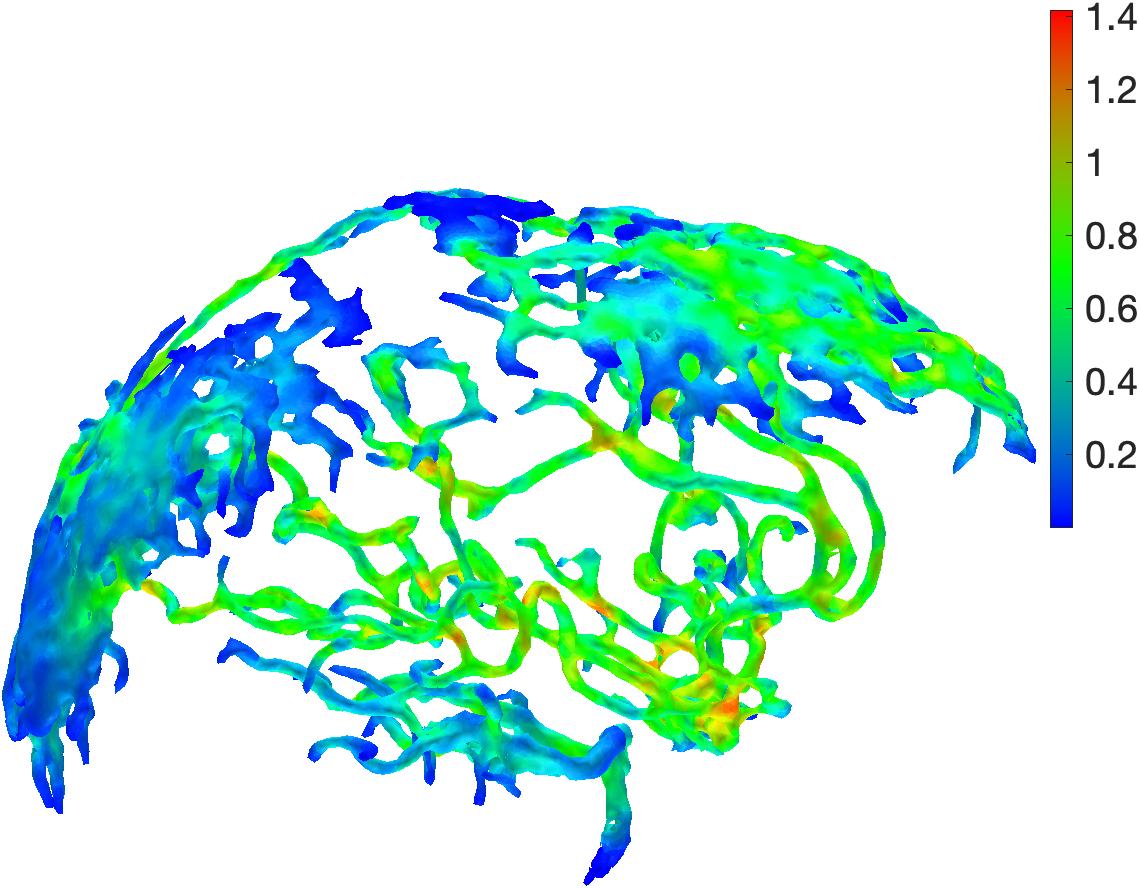} \\
        Peak velocity
        \end{minipage}
\begin{minipage}{2.5cm}
    \centering
        \includegraphics[width=2.5cm]{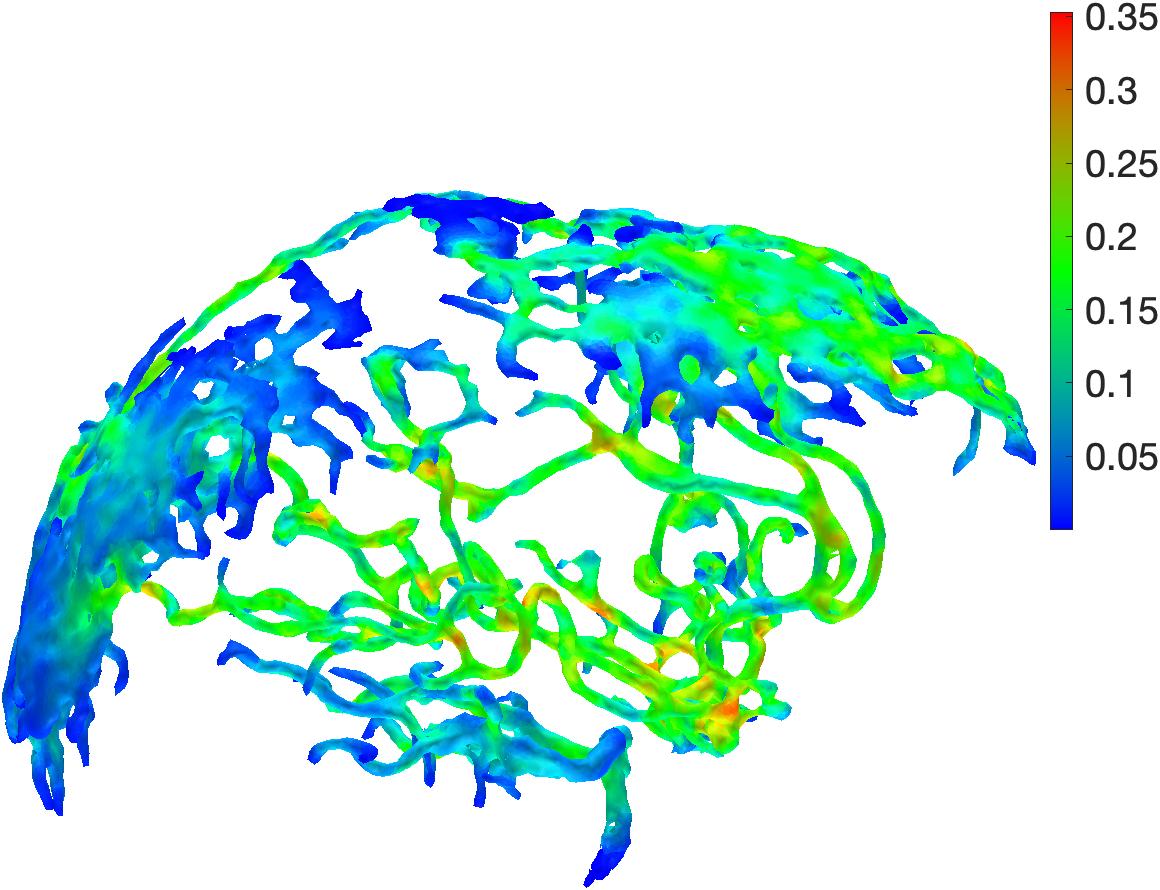} \\
        STD velocity
        \end{minipage} \\ \vskip0.2cm
            \begin{minipage}{2.5cm}
    \centering
            \includegraphics[width=2.5cm]{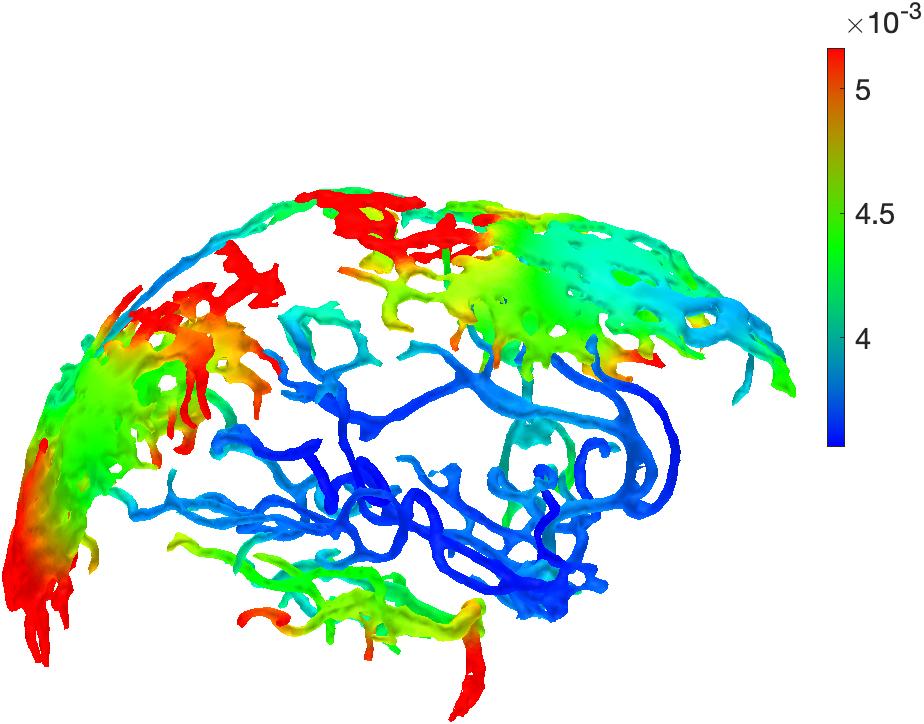} \\ Mean viscosity
            \end{minipage} 
             \begin{minipage}{2.5cm}
    \centering
        \includegraphics[width=2.5cm]{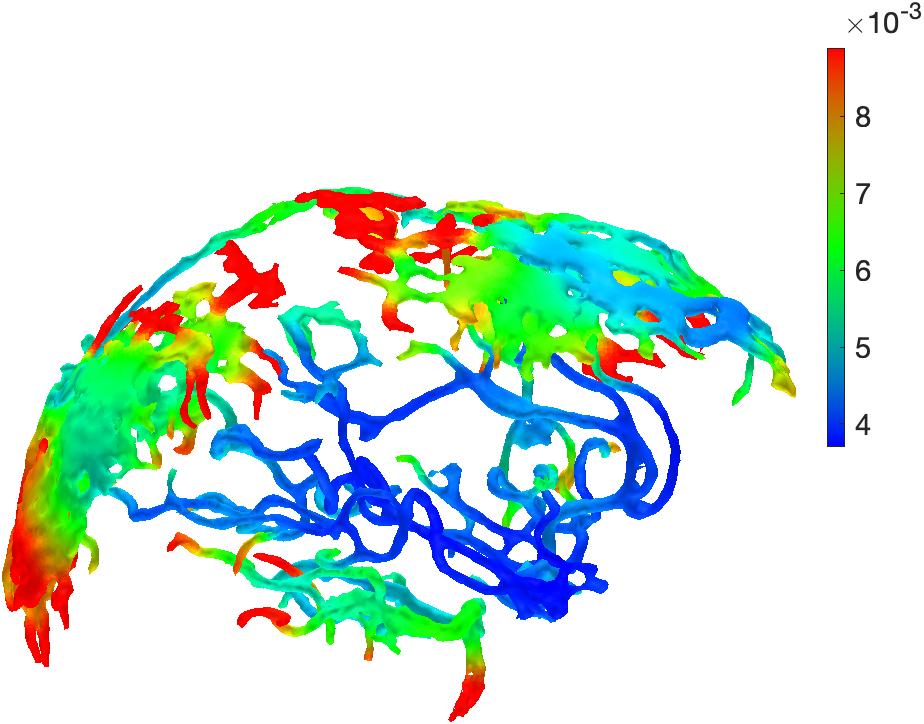} \\
        Peak viscosity
        \end{minipage}
\begin{minipage}{2.5cm}
    \centering
        \includegraphics[width=2.5cm]{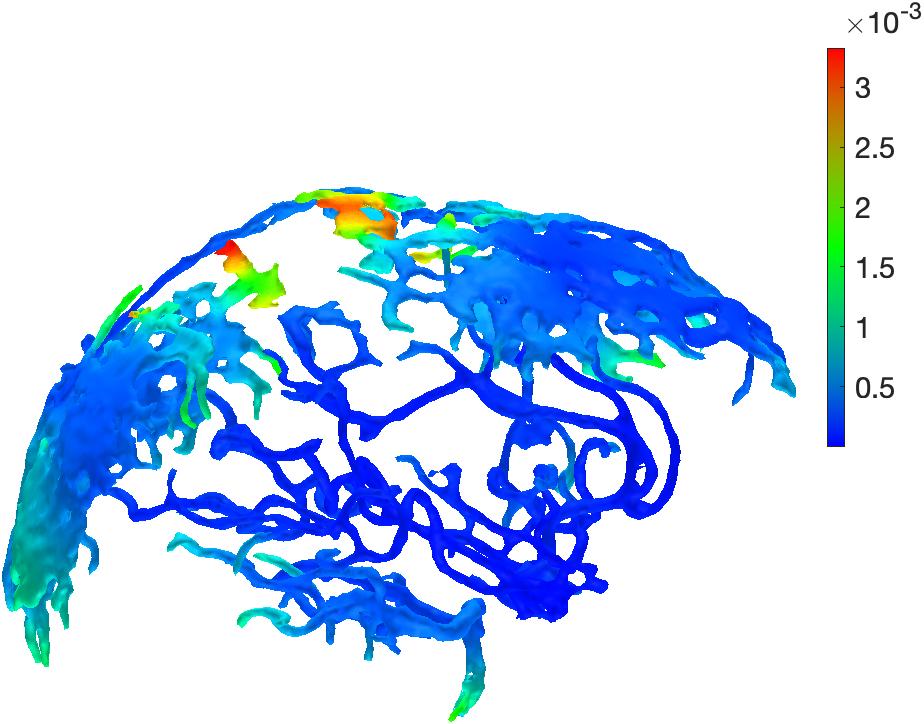} \\
        STD viscosity
        \end{minipage} \\ 
    \end{footnotesize}
    \caption{
    Mean, peak and standard deviation (STD) estimates of  blood pressure, velocity, and viscosity  for  80 bpm heart rate.}
    \label{fig:arteries_80bpm}
\end{figure}

\begin{figure}[h!]
\begin{footnotesize}
    \centering
    \begin{minipage}{2.5cm}
    \centering
    \includegraphics[width=2.5cm]{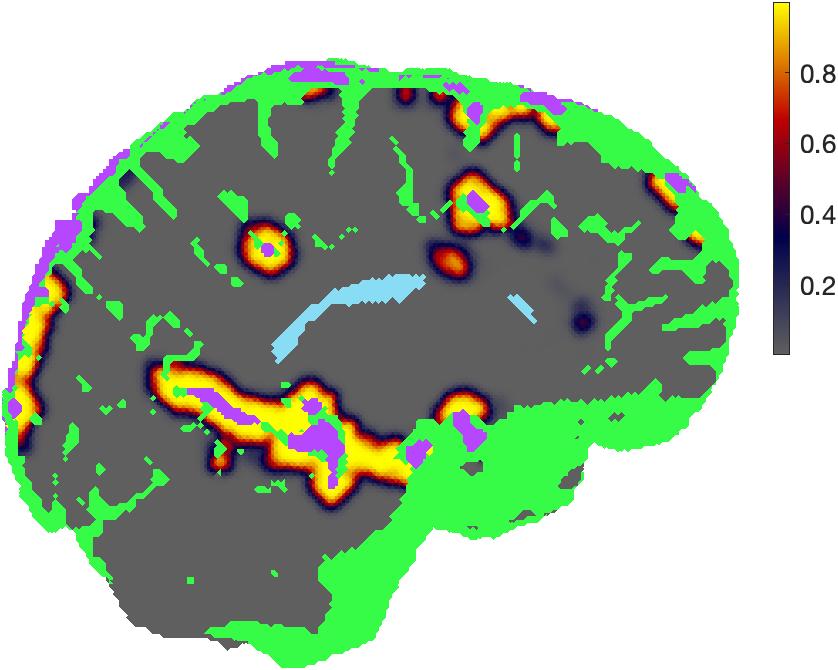} \\
    Mean, sagittal
\end{minipage}
 \begin{minipage}{2.5cm}
    \centering
            \includegraphics[width=2.5cm]{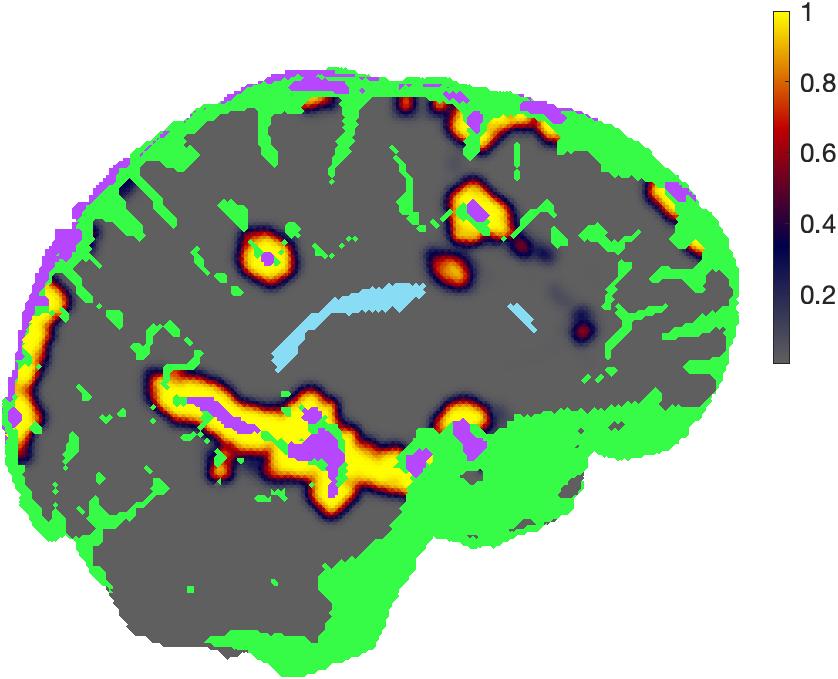} \\ peak, sagittal
            \end{minipage} 
 \begin{minipage}{2.5cm}
    \centering
            \includegraphics[width=2.5cm]{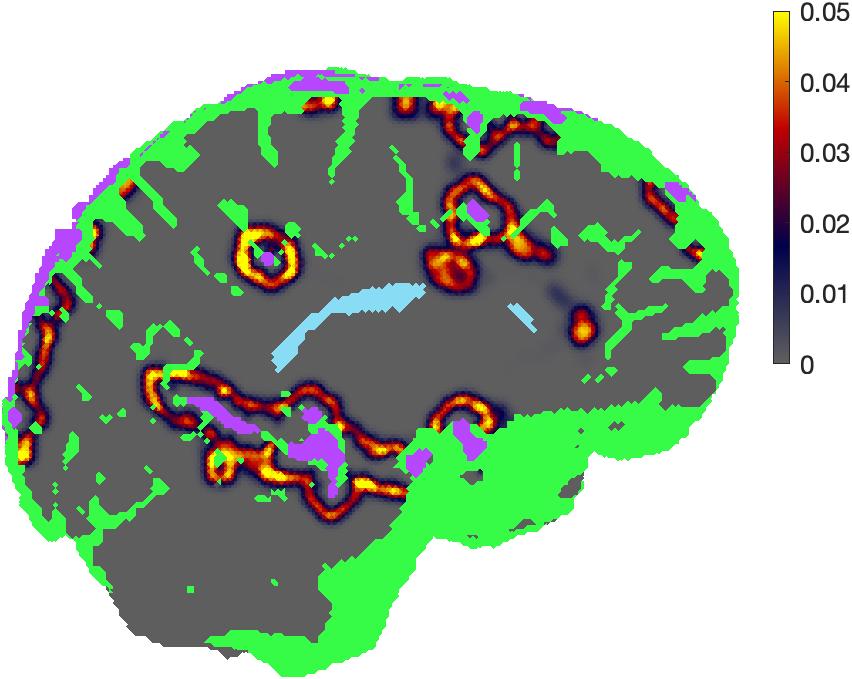} \\ STD, sagittal
            \end{minipage} \\ \vskip0.2cm 
 \begin{minipage}{2.5cm}
    \centering
        \includegraphics[width=2.4cm]{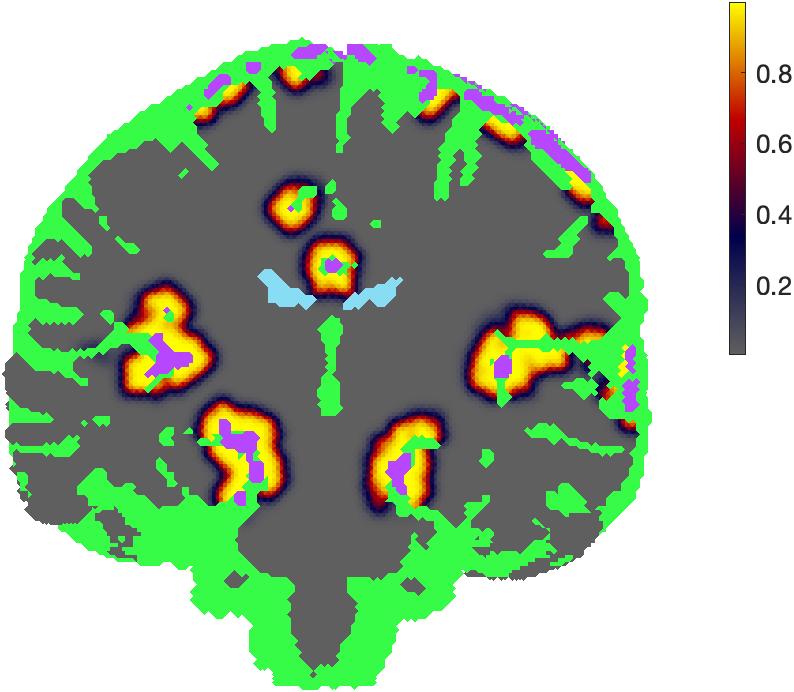} \\
        Mean, coronal 
        \end{minipage}
        \begin{minipage}{2.5cm}
    \centering
        \includegraphics[width=2.4cm]{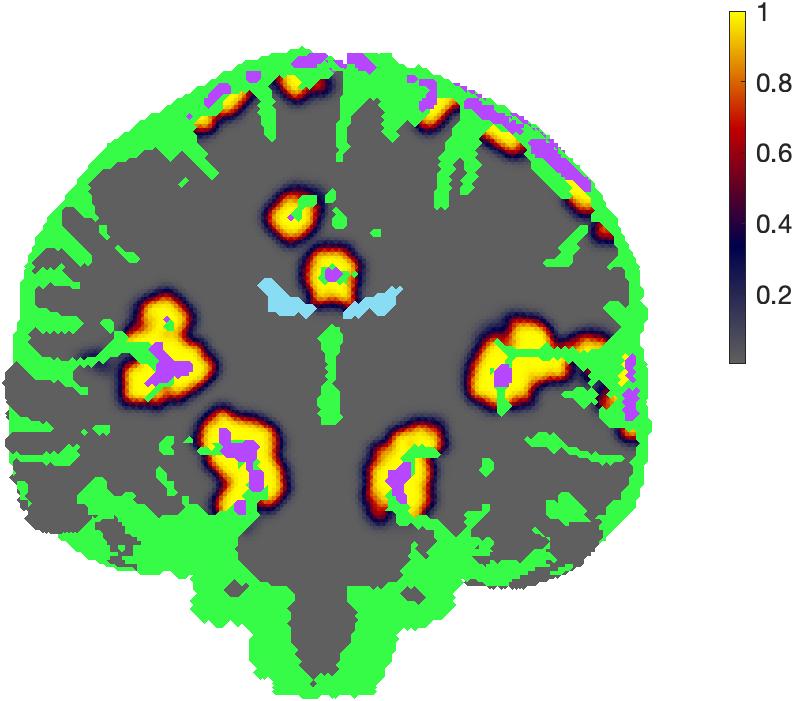} \\
        Peak, coronal
        \end{minipage}
             \begin{minipage}{2.5cm}
    \centering
        \includegraphics[width=2.4cm]{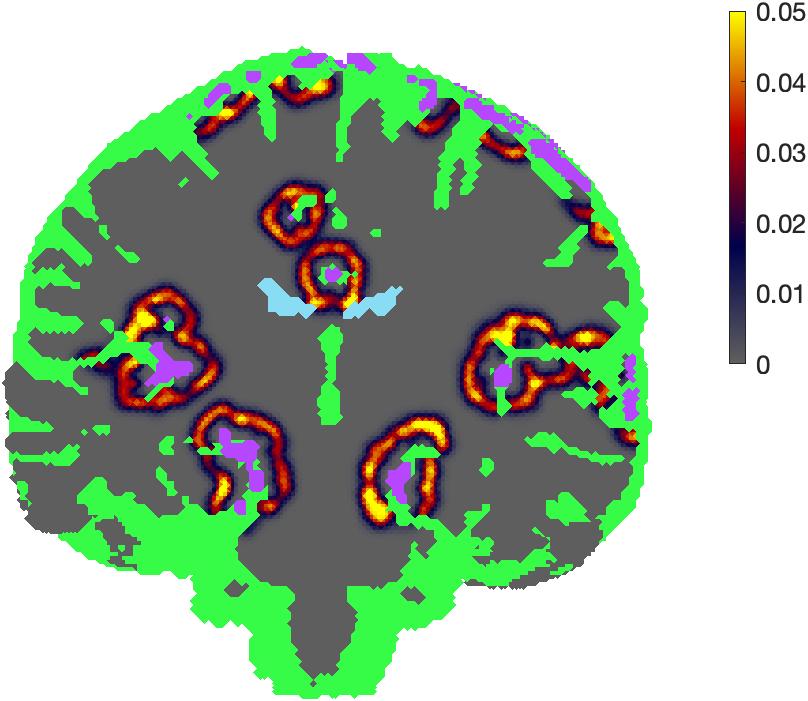} \\
        STD, coronal
        \end{minipage} \\ \vskip0.2cm
  \begin{minipage}{2.5cm}
    \centering
        \includegraphics[width=2.1cm]{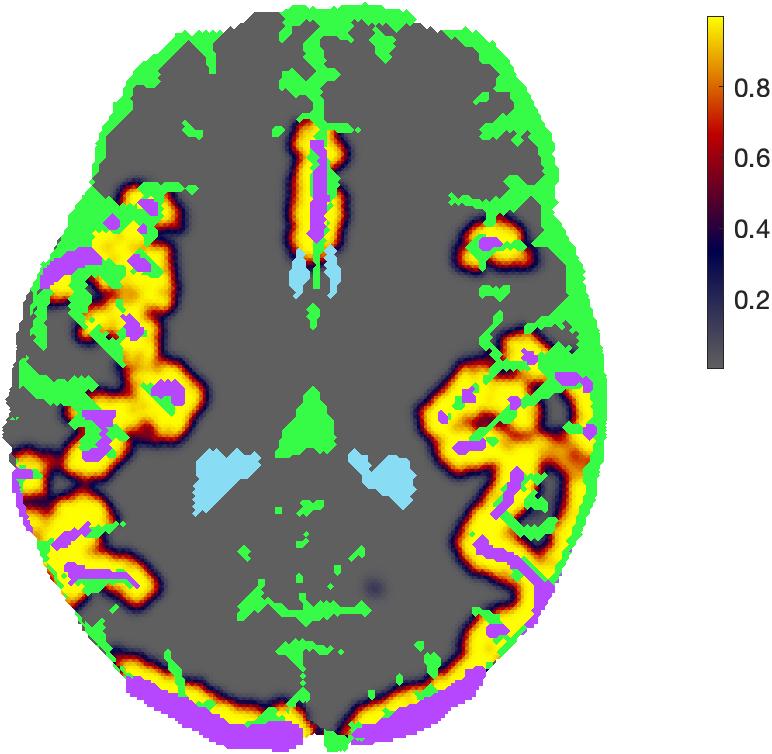} \\
      Mean, axial
        \end{minipage} 
\begin{minipage}{2.5cm}
    \centering
        \includegraphics[width=2.1cm]{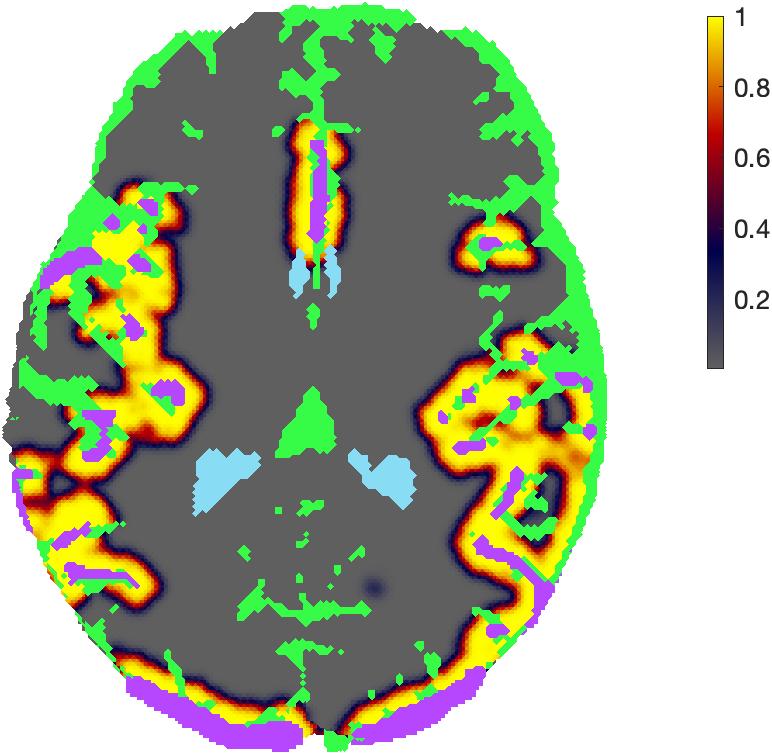} \\
        Peak, axial
        \end{minipage} 
\begin{minipage}{2.5cm}
    \centering
        \includegraphics[width=2.1cm]{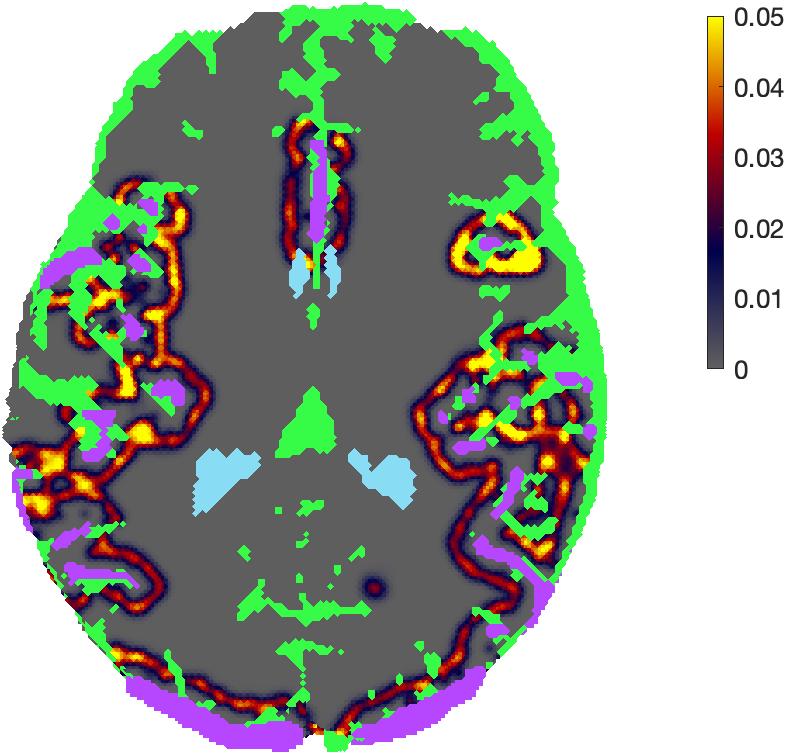} \\
        STD, axial
        \end{minipage} \\    
    \end{footnotesize}
    \caption{Estimates of  volumetric blood concentration in microcirculation for  80 bpm heart rate.}
    \label{fig:concentration_60bpm}
\end{figure}

\begin{figure}[h!]
\begin{footnotesize}
    \centering
    \begin{minipage}{2.5cm}
    \centering
    \includegraphics[width=2.5cm]{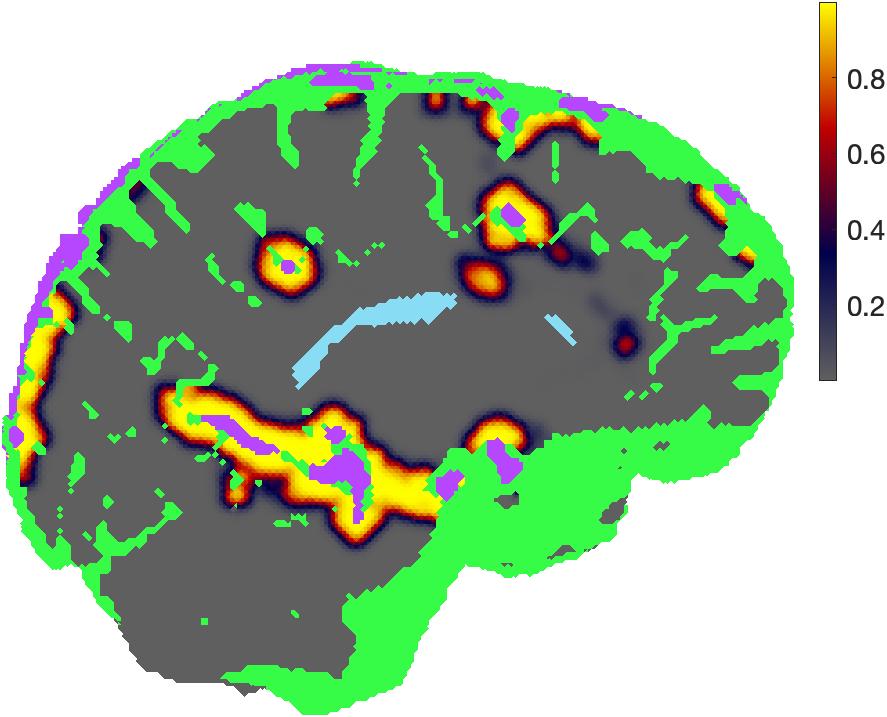} \\
    Mean, sagittal
\end{minipage}
 \begin{minipage}{2.5cm}
    \centering
            \includegraphics[width=2.5cm]{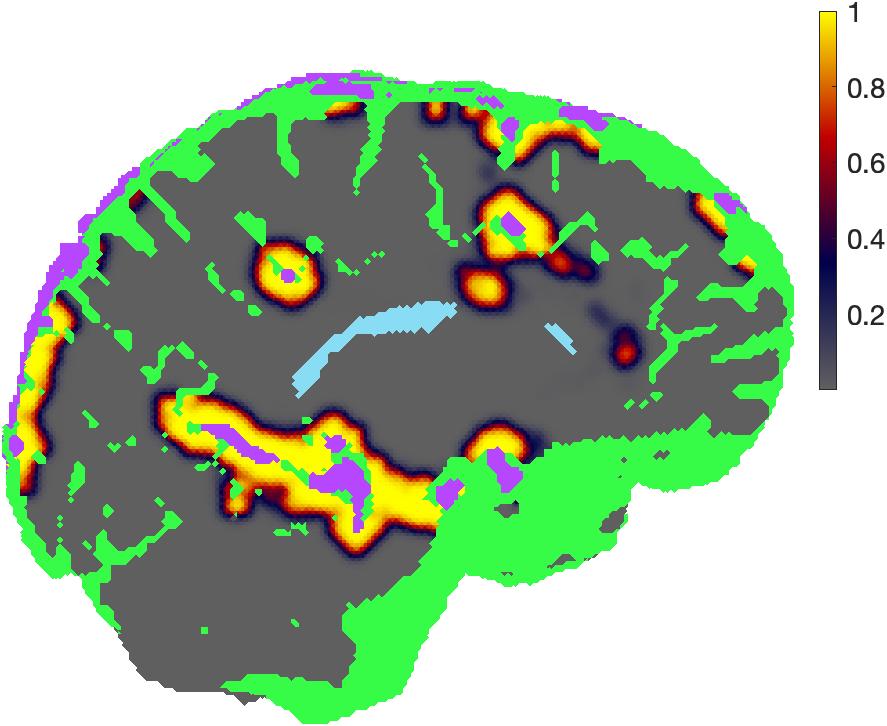} \\ Peak, sagittal
            \end{minipage} 
 \begin{minipage}{2.5cm}
    \centering
            \includegraphics[width=2.5cm]{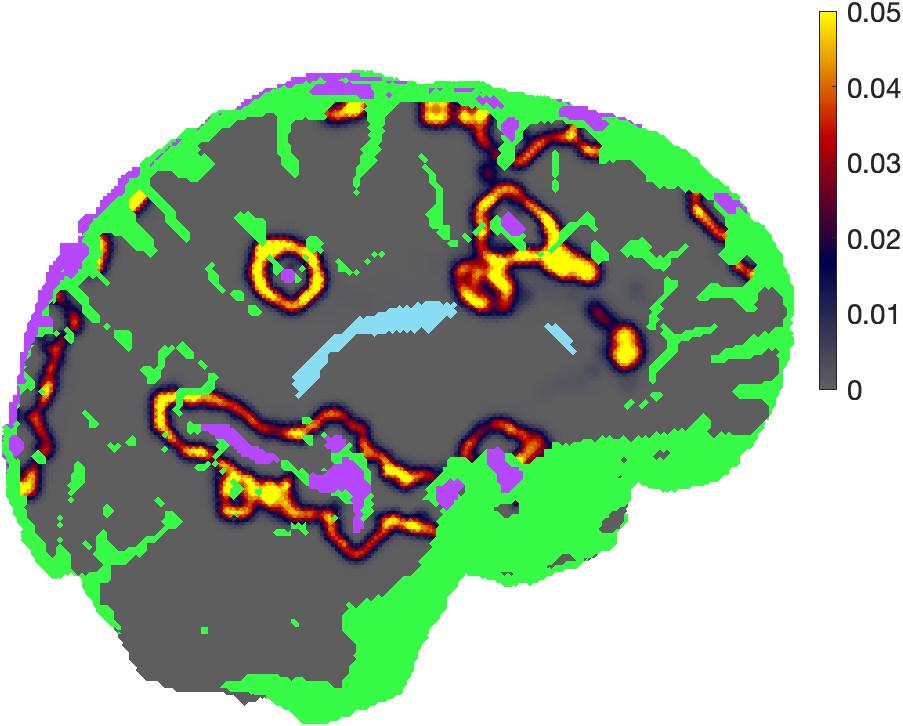} \\ STD, sagittal
            \end{minipage} \\ \vskip0.2cm 
 \begin{minipage}{2.5cm}
    \centering
        \includegraphics[width=2.4cm]{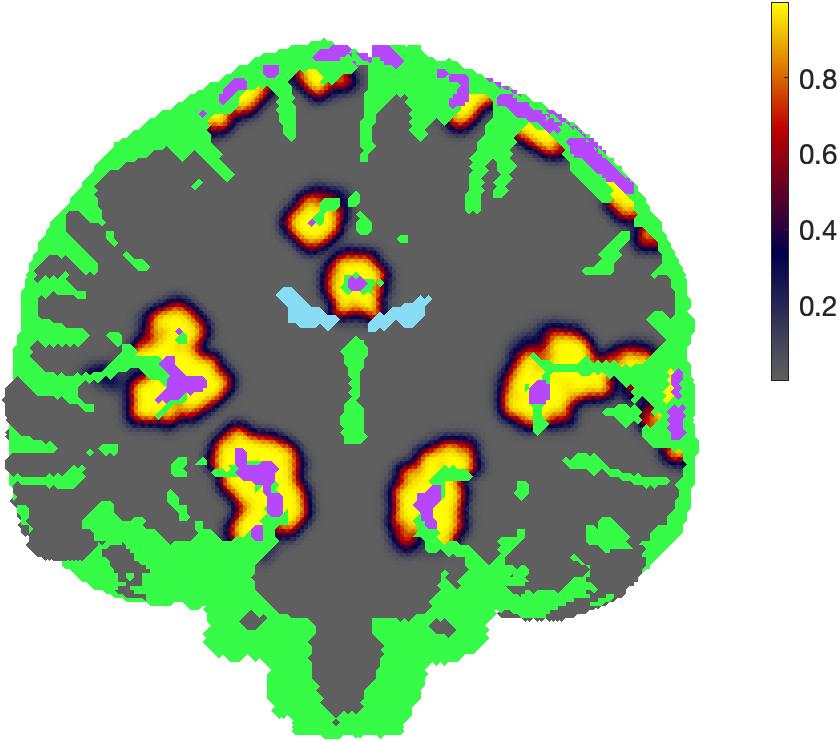} \\
        Mean, coronal 
        \end{minipage}
        \begin{minipage}{2.5cm}
    \centering
        \includegraphics[width=2.4cm]{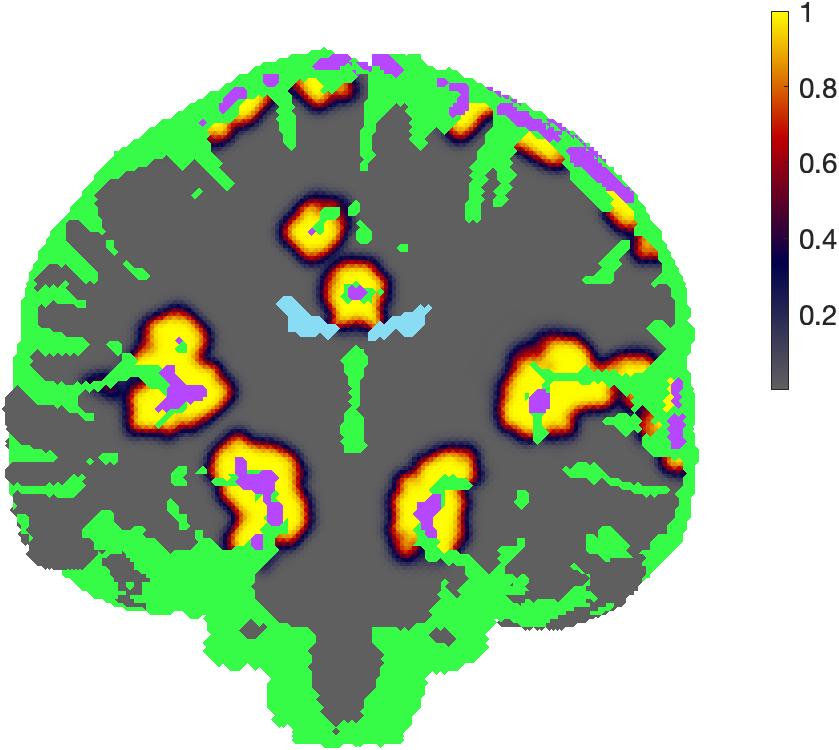} \\
        Peak, coronal
        \end{minipage}
             \begin{minipage}{2.5cm}
    \centering
        \includegraphics[width=2.4cm]{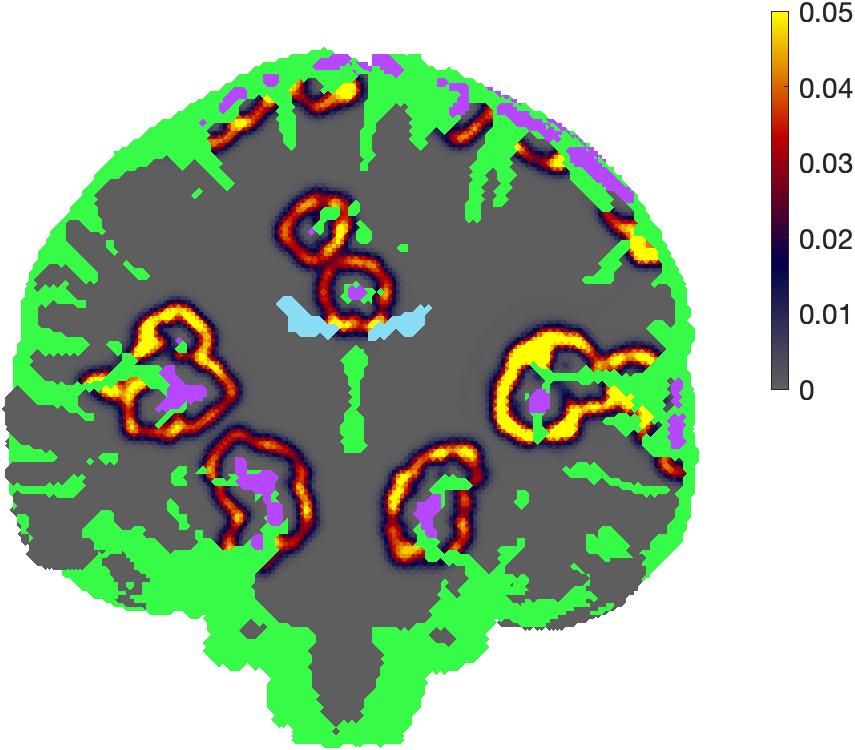} \\
        STD, coronal
        \end{minipage} \\ \vskip0.2cm
  \begin{minipage}{2.5cm}
    \centering
        \includegraphics[width=2.1cm]{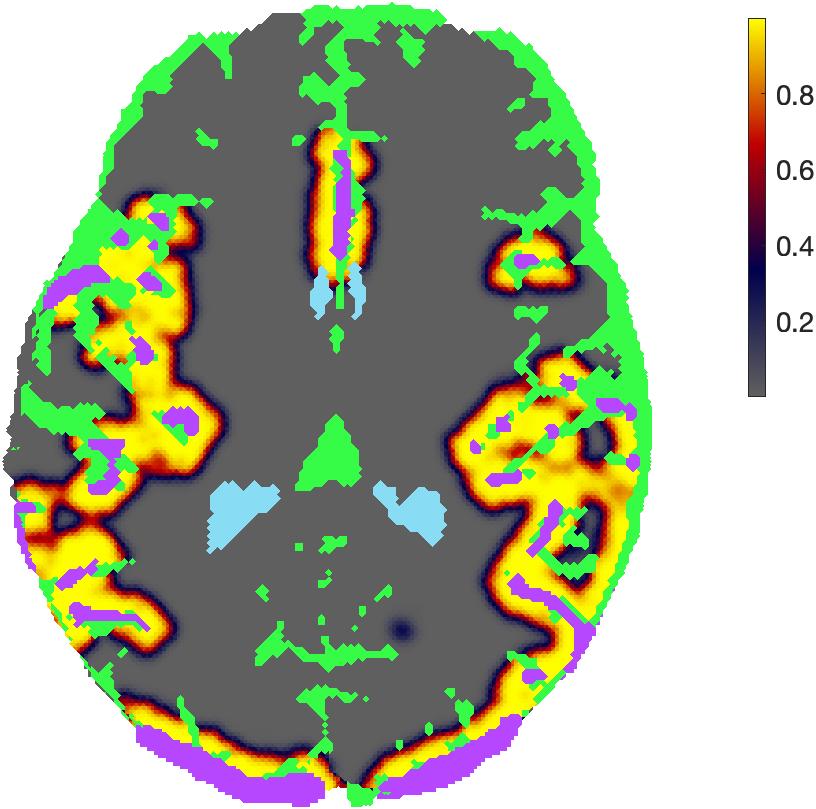} \\
      Mean, axial
        \end{minipage} 
\begin{minipage}{2.5cm}
    \centering
        \includegraphics[width=2.1cm]{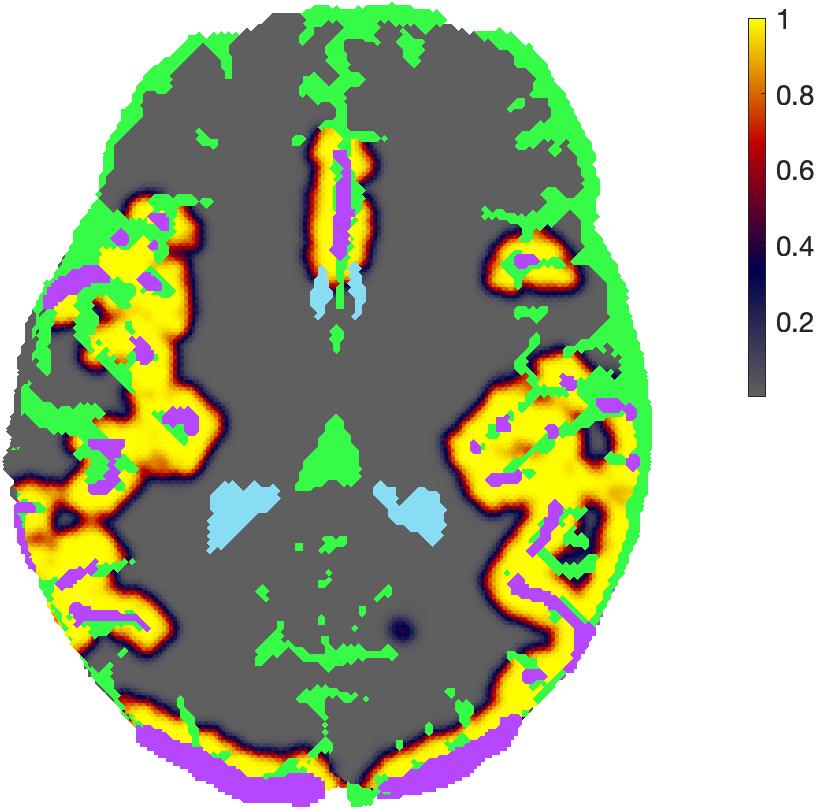} \\
        Peak, axial
        \end{minipage} 
\begin{minipage}{2.5cm}
    \centering
        \includegraphics[width=2.1cm]{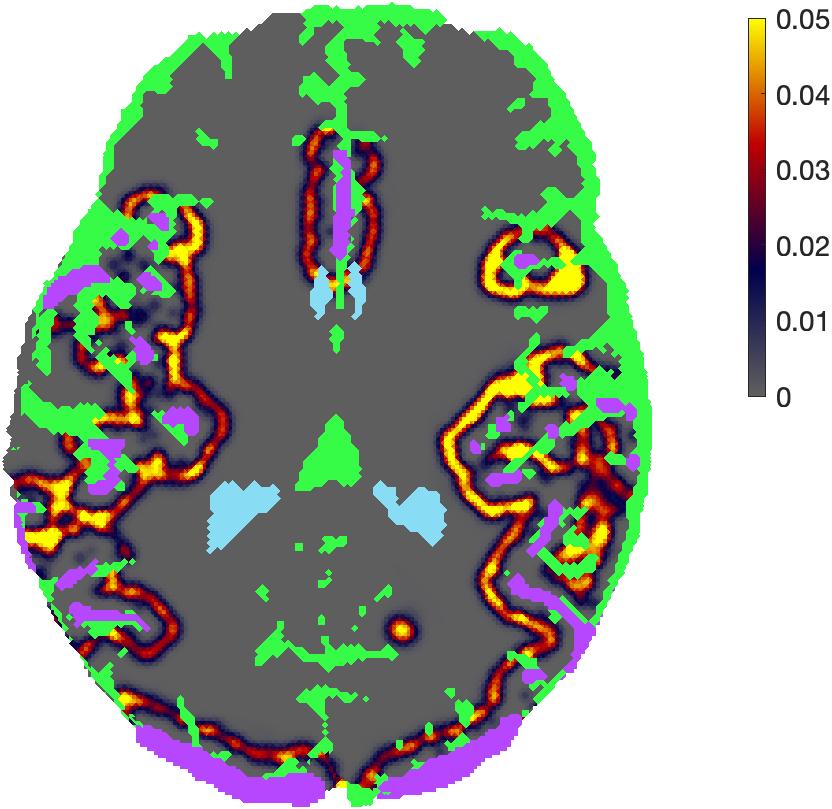} \\
        STD, axial
        \end{minipage} \\    
    \end{footnotesize}
    \caption{Estimates of  volumetric blood concentration for 80 bpm heart rate.}
    \label{fig:concentration_80bpm}
\end{figure}

\begin{figure}[h!]
\begin{footnotesize}
    \centering
    \begin{minipage}{2.2cm}
    \centering
    \includegraphics[width=2.1cm]{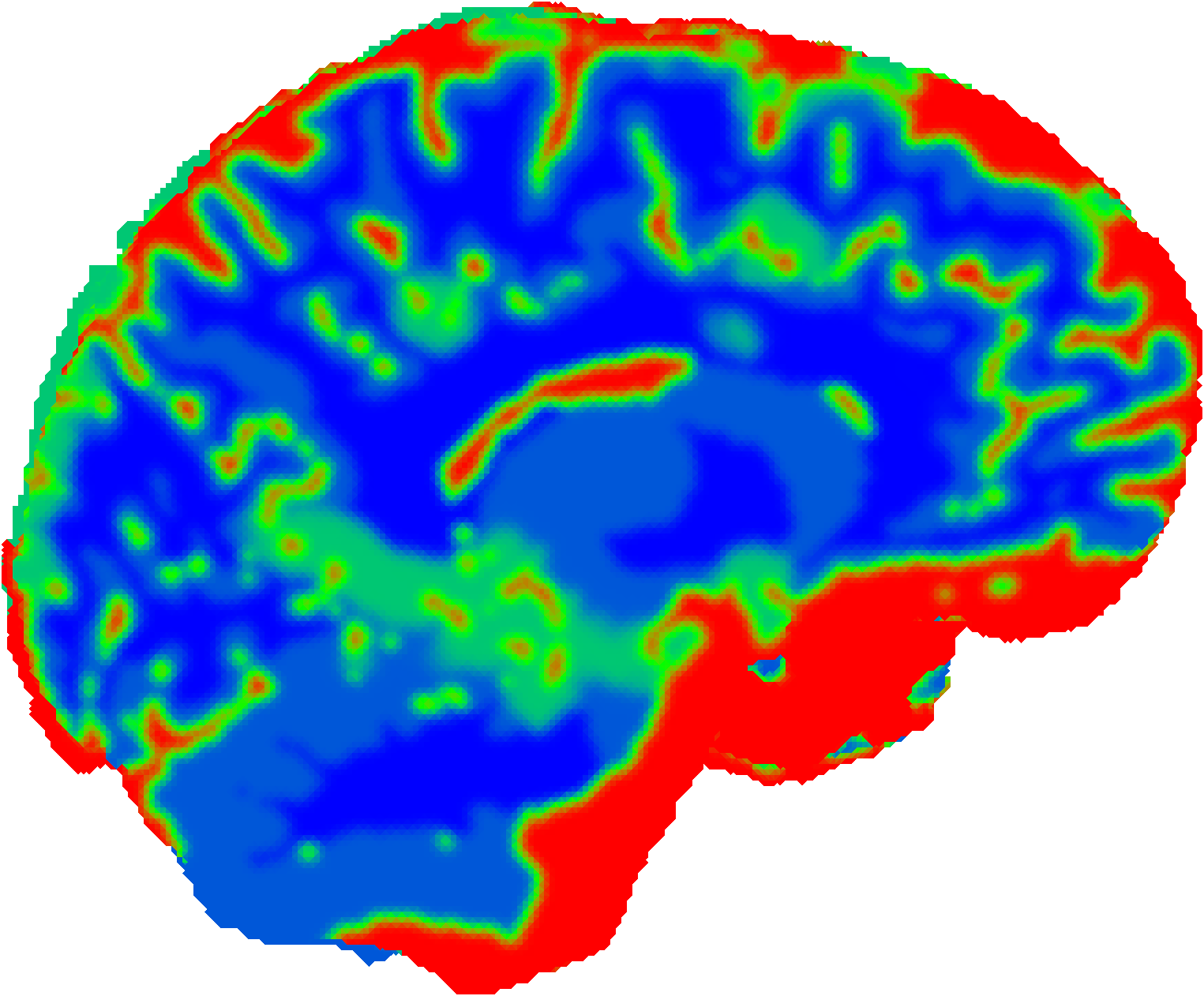} \\
    Mean, sagittal
\\ \vskip0.2cm
    \centering
        \includegraphics[width=2.0cm]{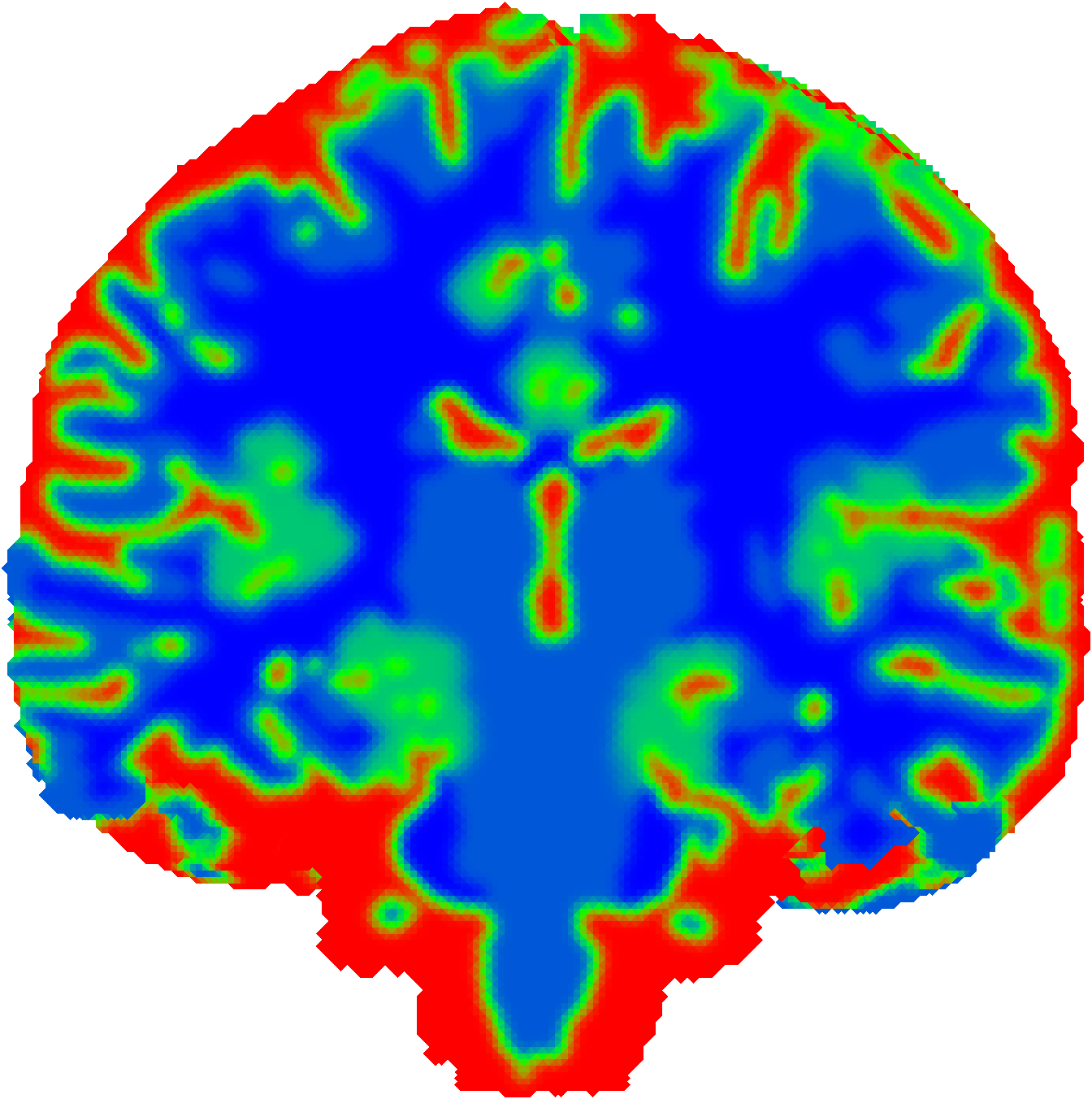} \\
        Mean, coronal 
       \\ \vskip0.2cm
    \centering
        \includegraphics[width=1.7cm]{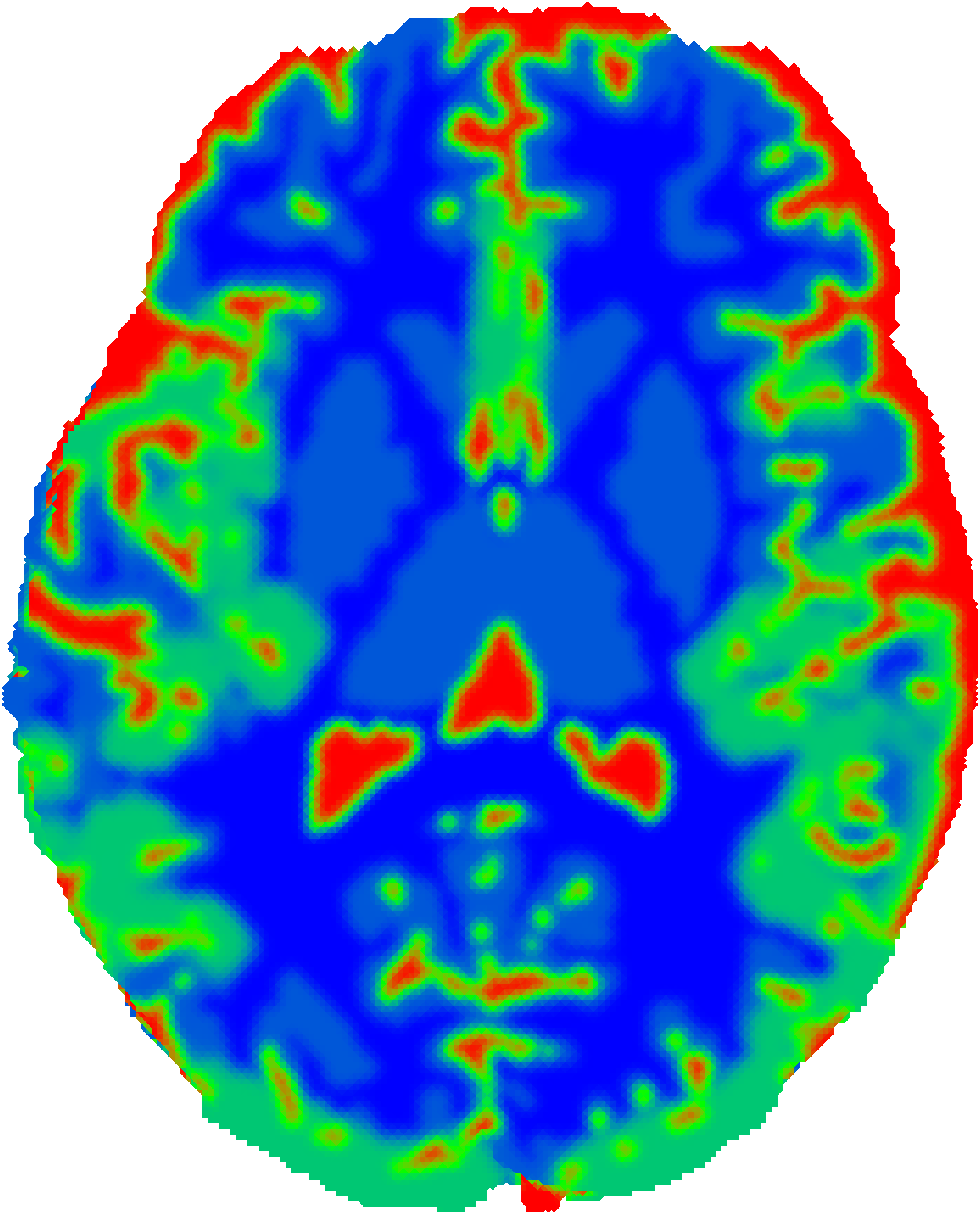} \\
      Mean, axial
        \end{minipage} 
        \begin{minipage}{0.5cm}
              \includegraphics[height=4.5cm]{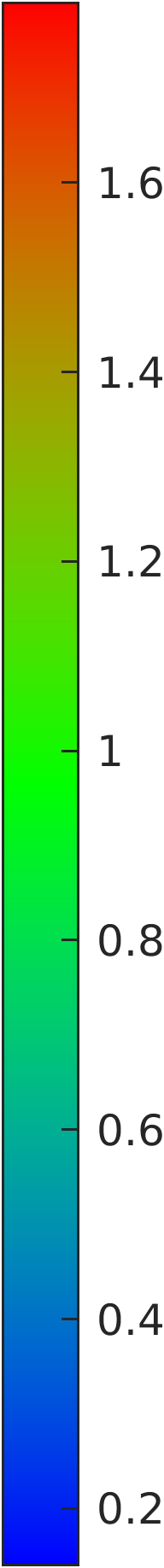}
              \end{minipage}
 \begin{minipage}{2.2cm}
    \centering
            \includegraphics[width=2.1cm]{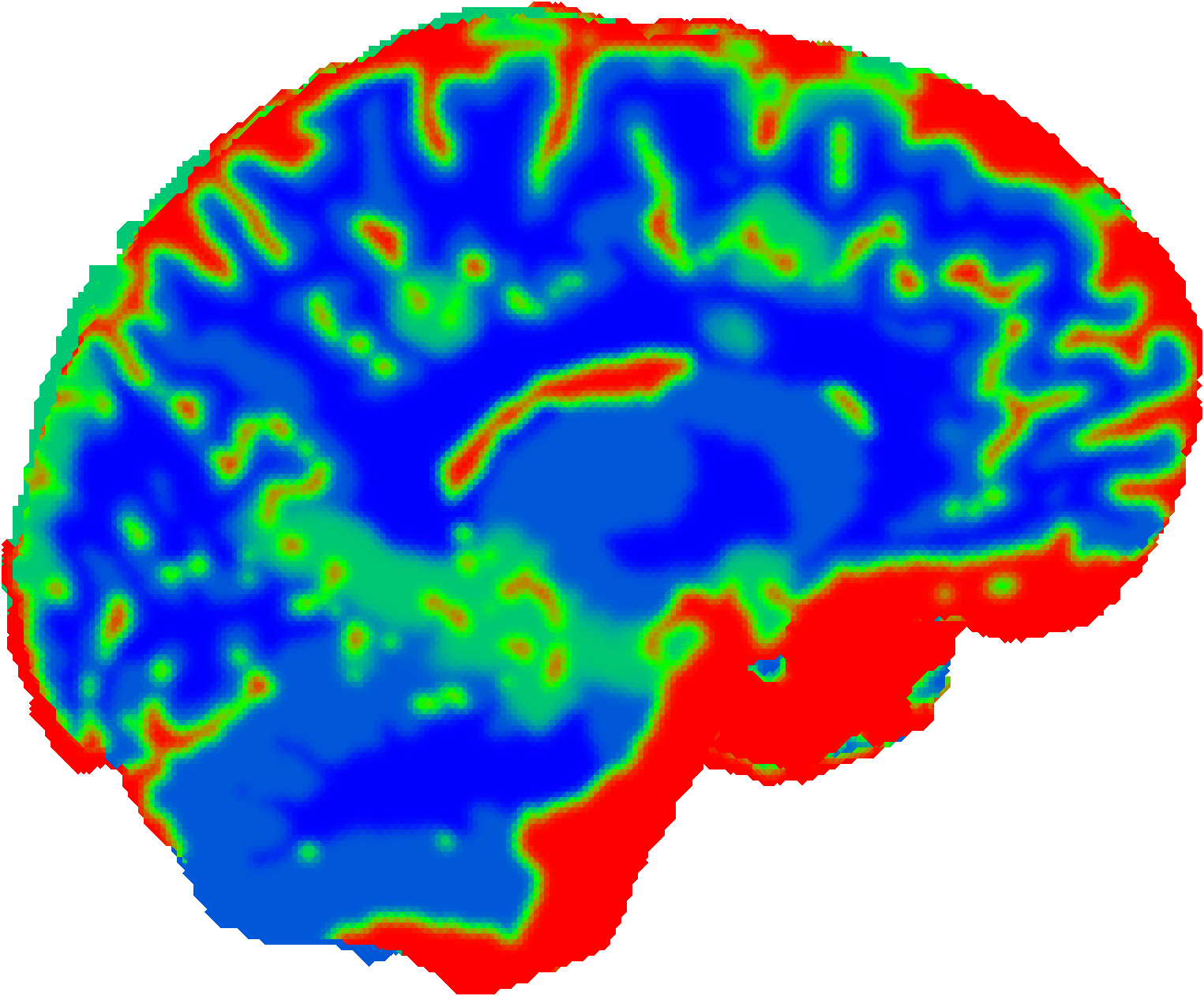} \\ Peak, sagittal
          \\ \vskip0.2cm
    \centering
        \includegraphics[width=2.0cm]{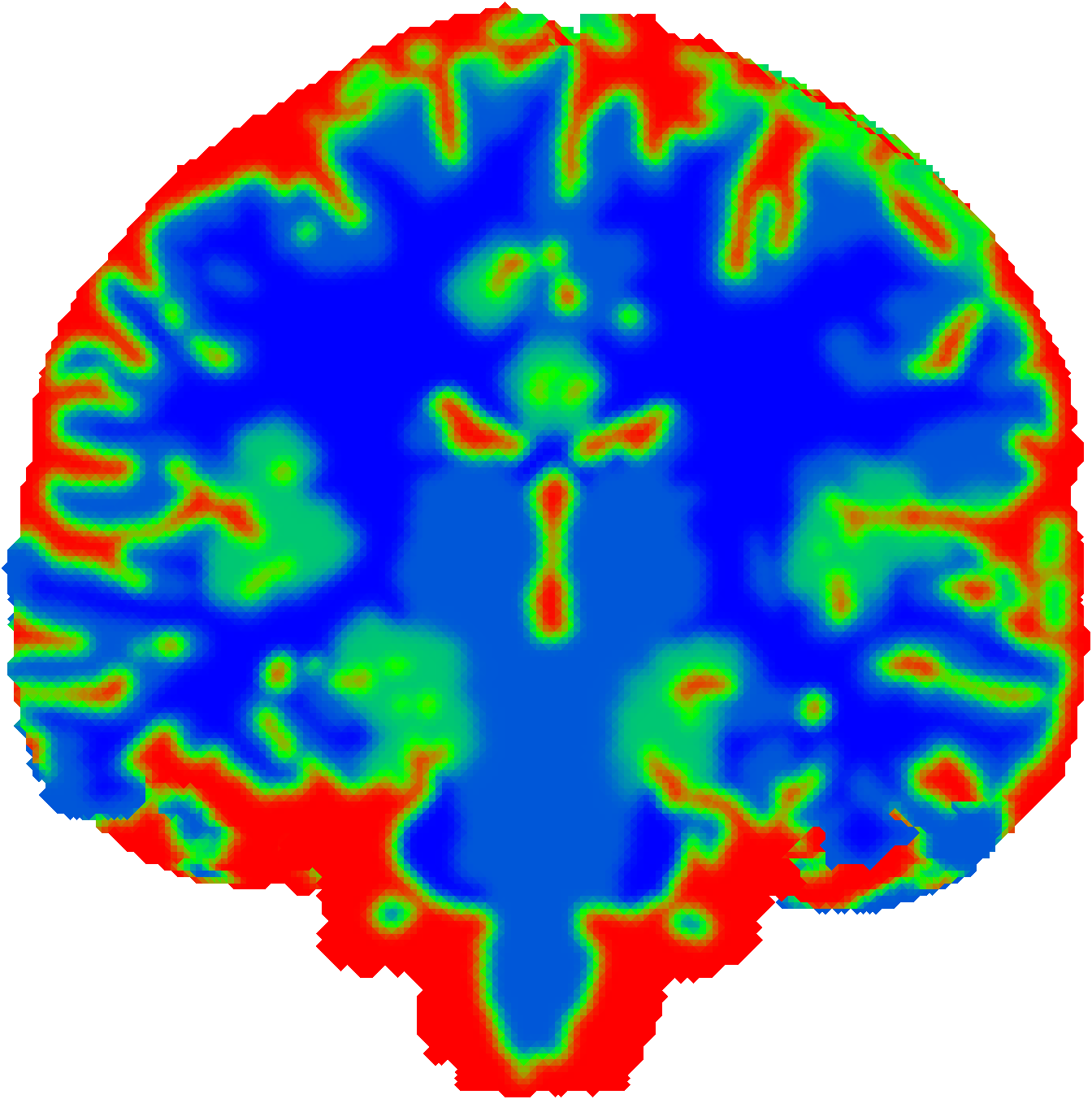} \\
        Peak, coronal
   \\ \vskip0.2cm
    \centering
        \includegraphics[width=1.7cm]{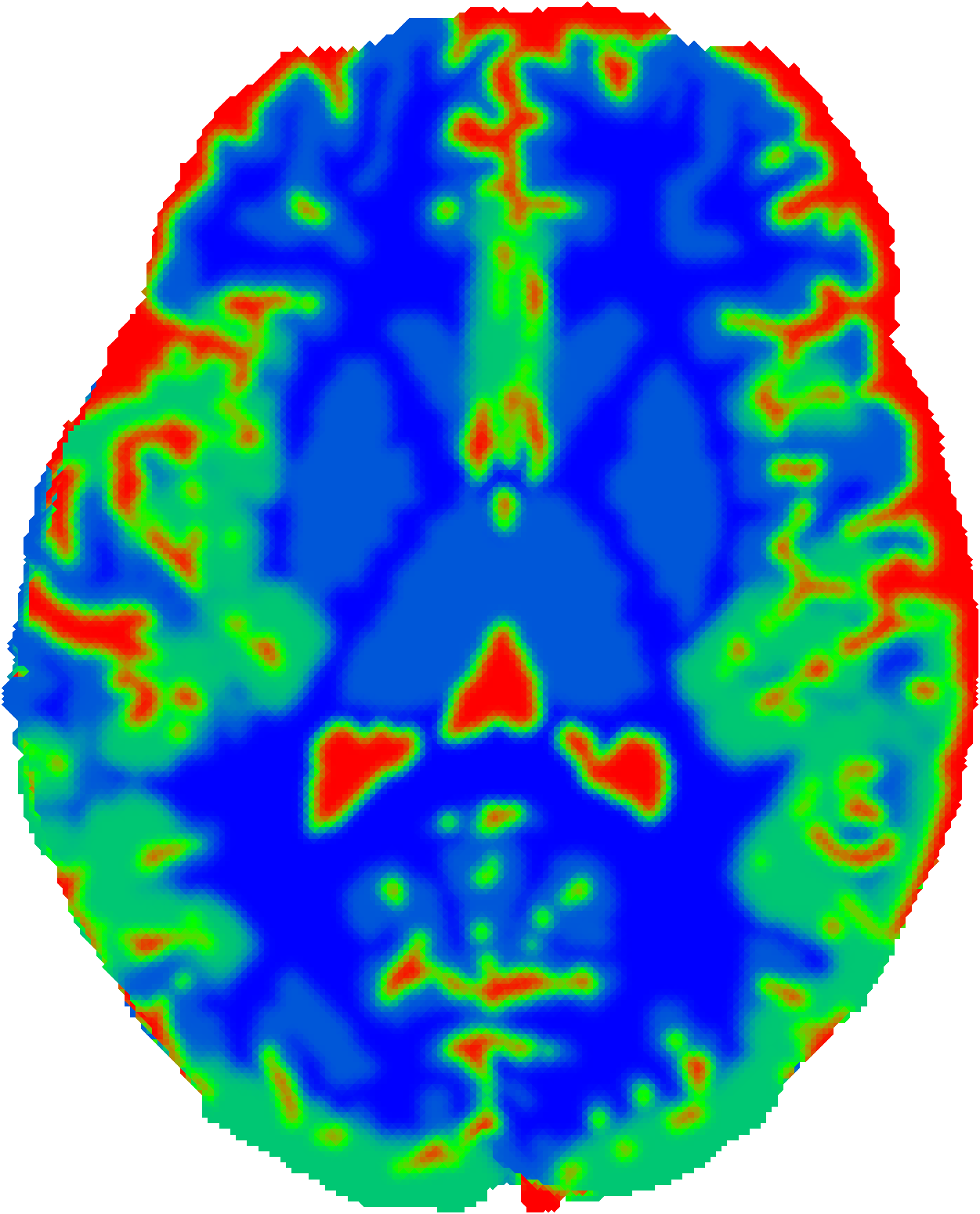} \\
        Peak, axial
        \end{minipage} 
       \begin{minipage}{0.5cm}
              \includegraphics[height=4.5cm]{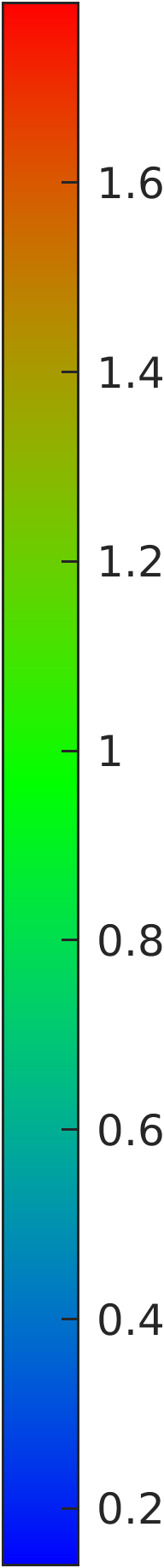}
              \end{minipage}
        \begin{minipage}{2.2cm}
    \centering
            \includegraphics[width=2.1cm]{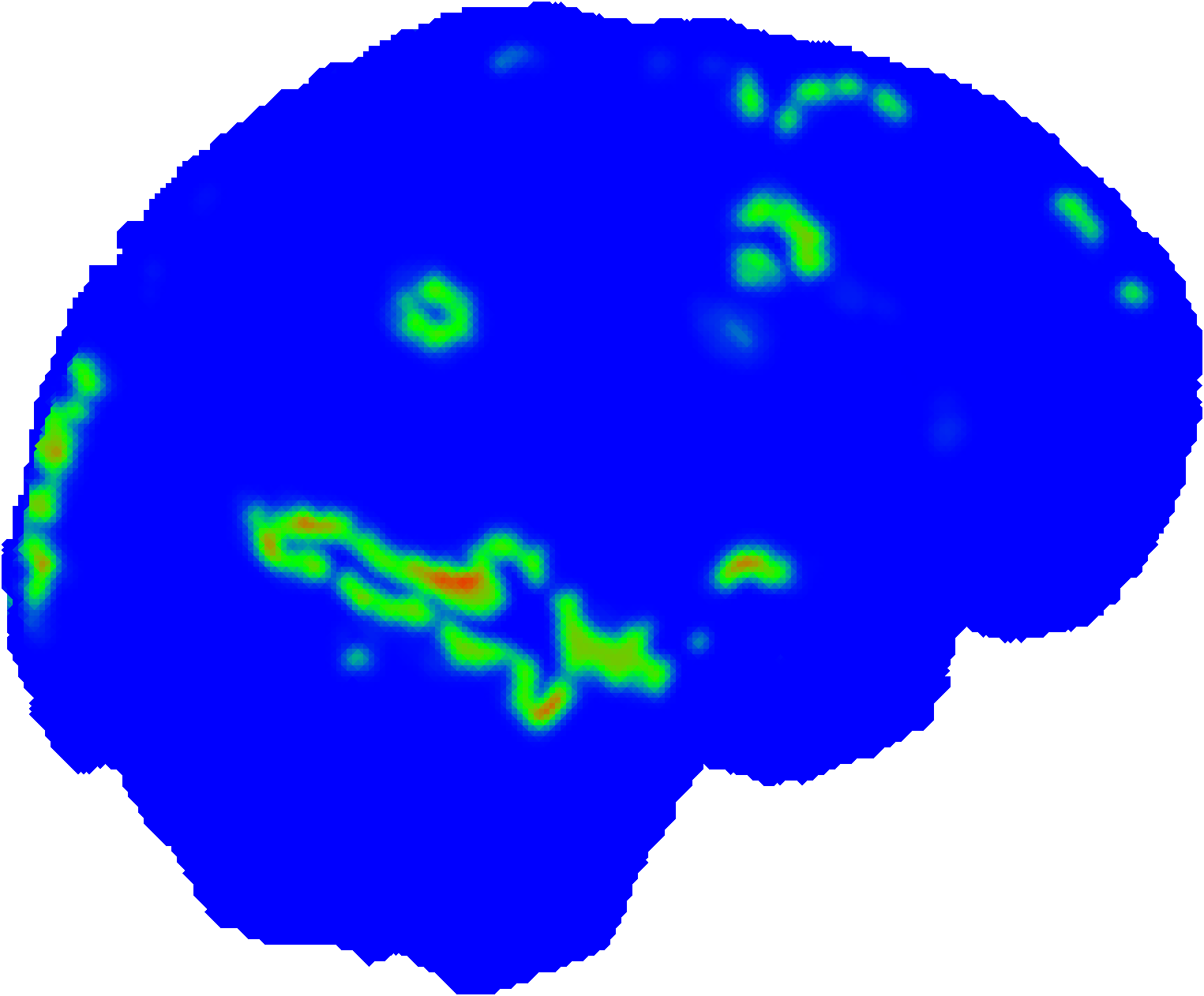} \\ STD, sagittal
         \\ \vskip0.2cm 
    \centering
        \includegraphics[width=2.0cm]{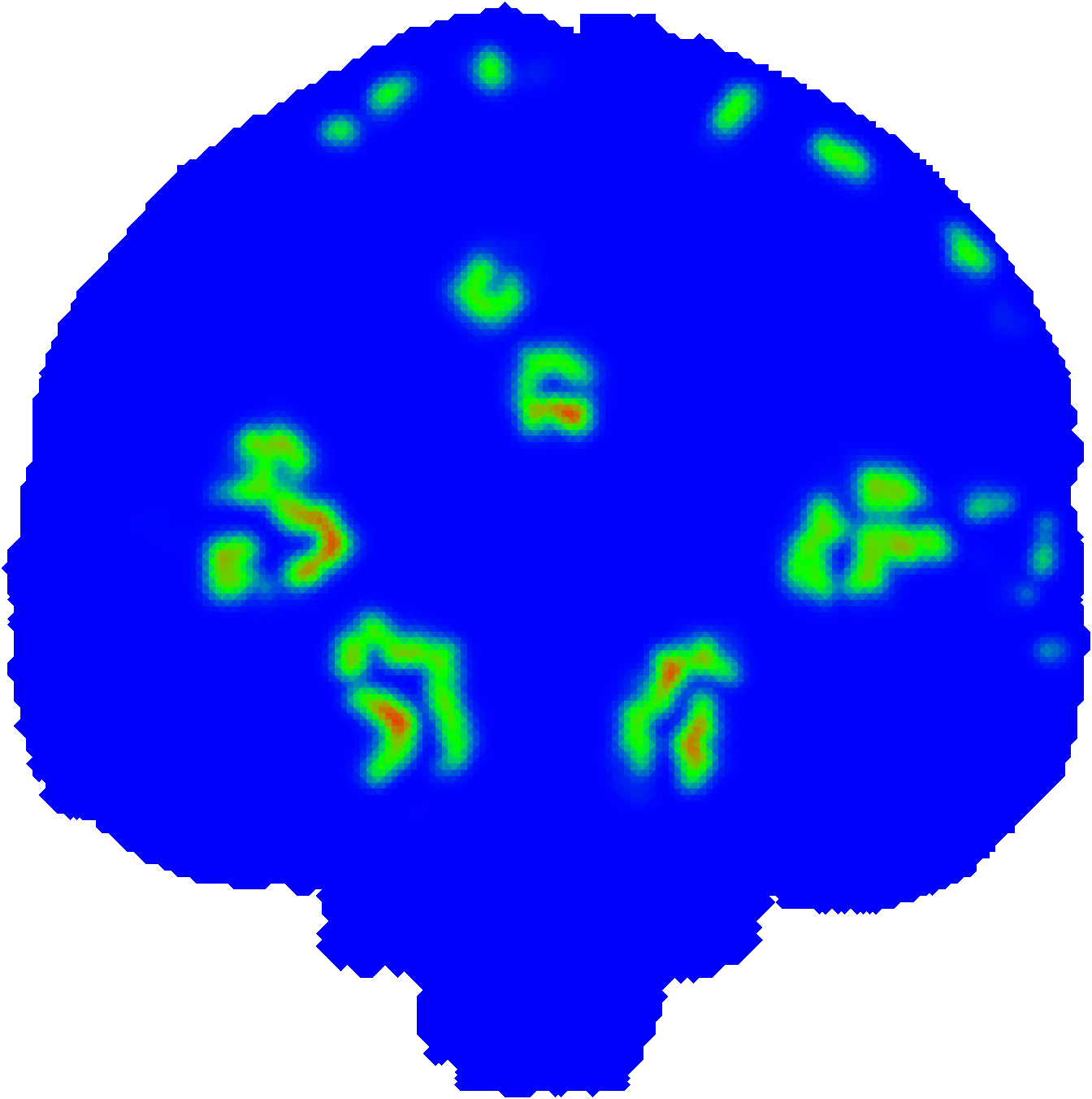} \\
        STD, coronal
      \\ \vskip0.2cm
    \centering
        \includegraphics[width=1.7cm]{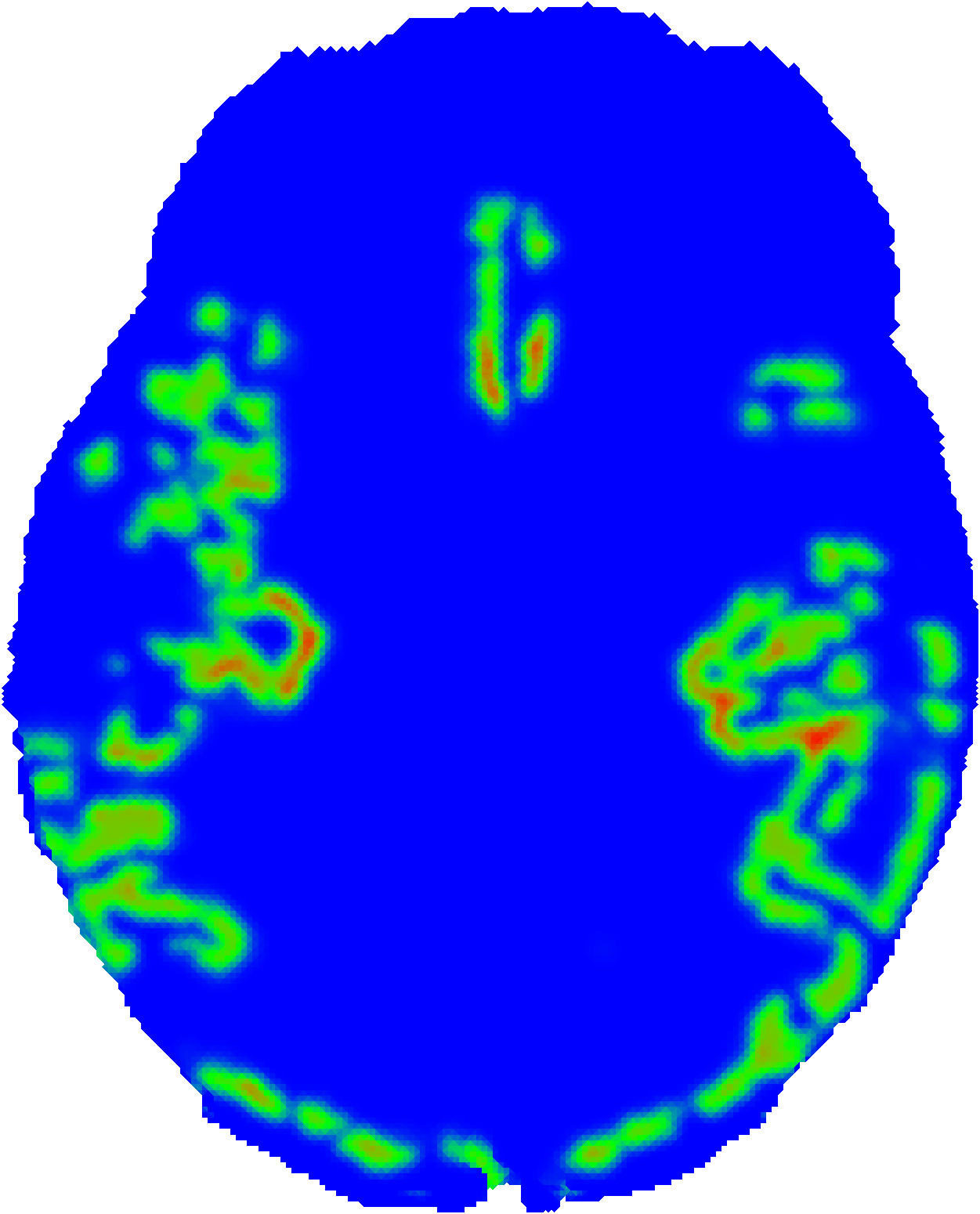} \\
        STD, axial
        \end{minipage} 
       \begin{minipage}{0.5cm}
              \includegraphics[height=4.5cm]{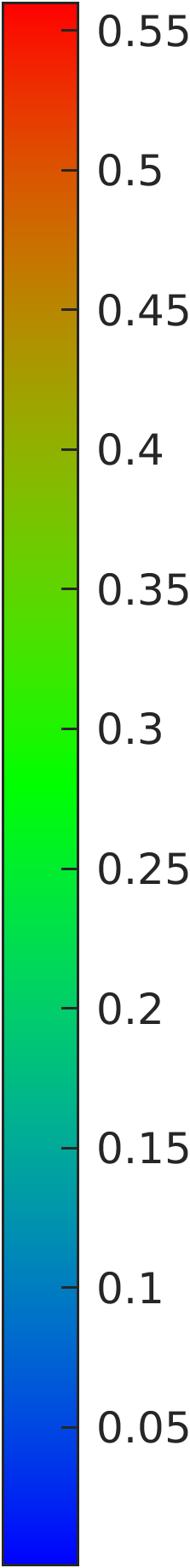}
              \end{minipage}
        \\    
    \end{footnotesize}
    \caption{Estimates of  tissue conductivity based on a piecewise constant background conductivity and estimated volumetric blood concentration in microcirculation for   60 bpm heart rate.}
    \label{fig:conductivity_60bpm}
\end{figure}

\begin{figure}[h!]
\begin{footnotesize}
    \centering
    \begin{minipage}{2.2cm}
    \centering
    \includegraphics[width=2.1cm]{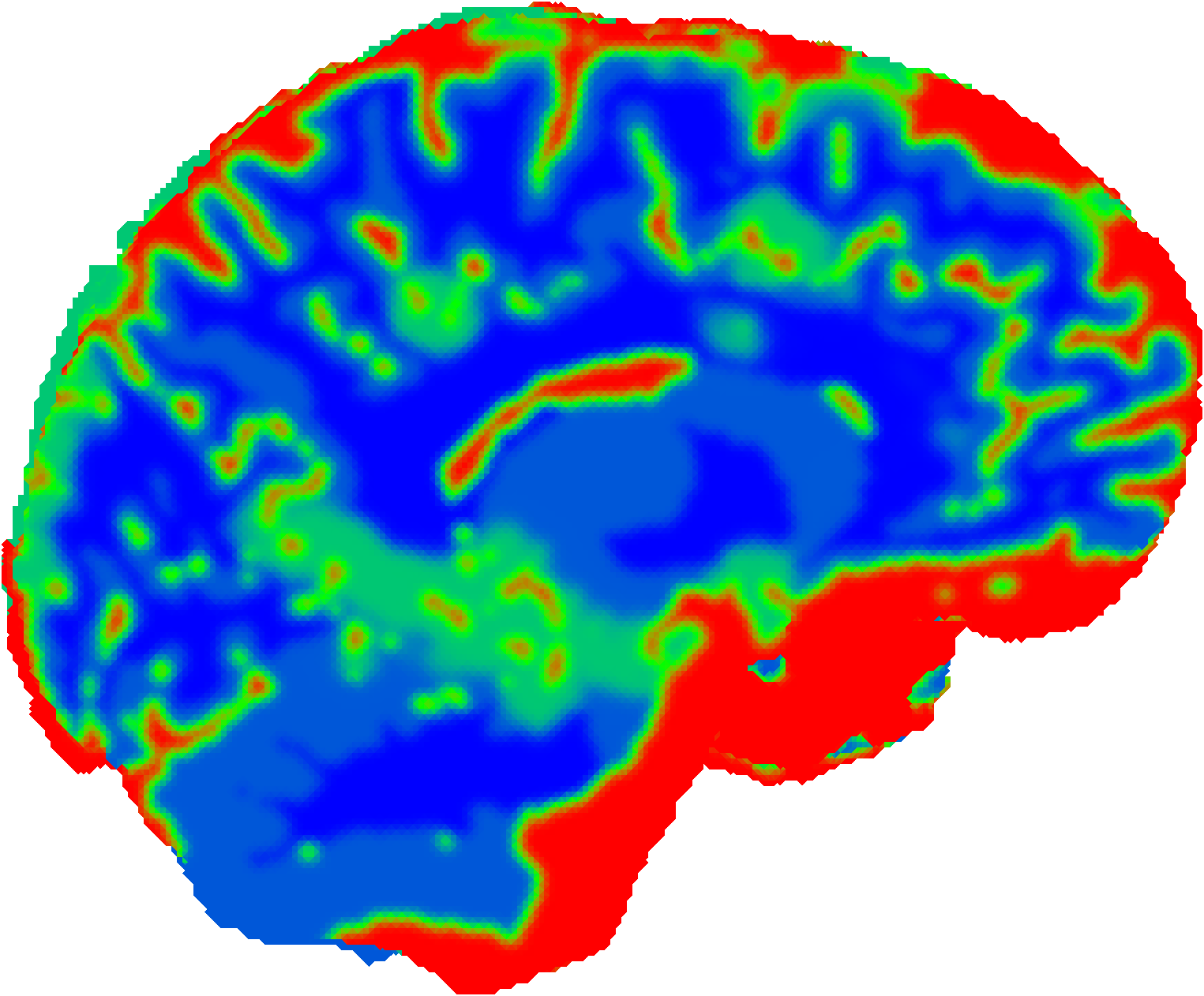} \\
    Mean, sagittal
\\ \vskip0.2cm
    \centering
        \includegraphics[width=2.0cm]{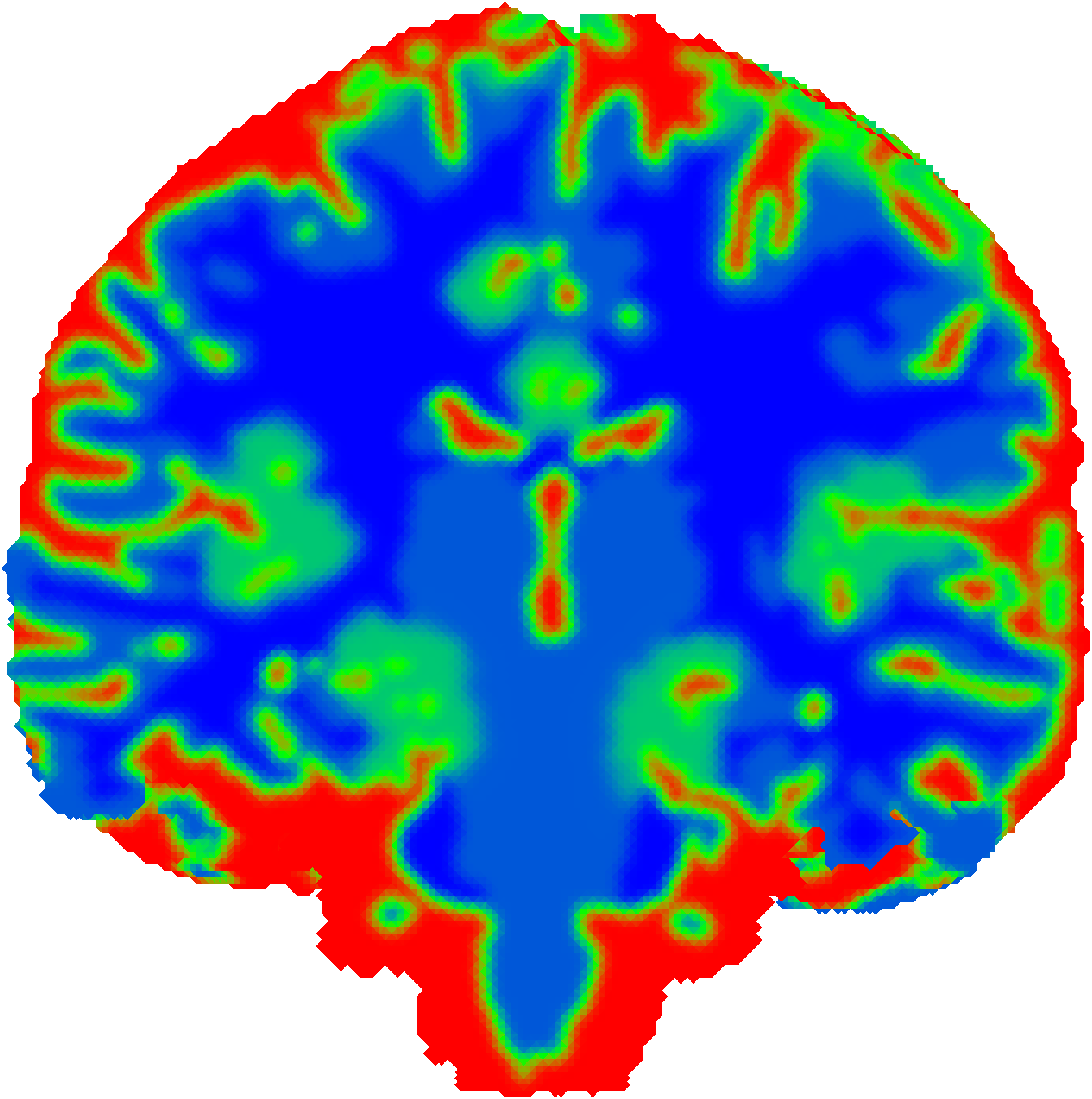} \\
        Mean, coronal 
       \\ \vskip0.2cm
    \centering
        \includegraphics[width=1.7cm]{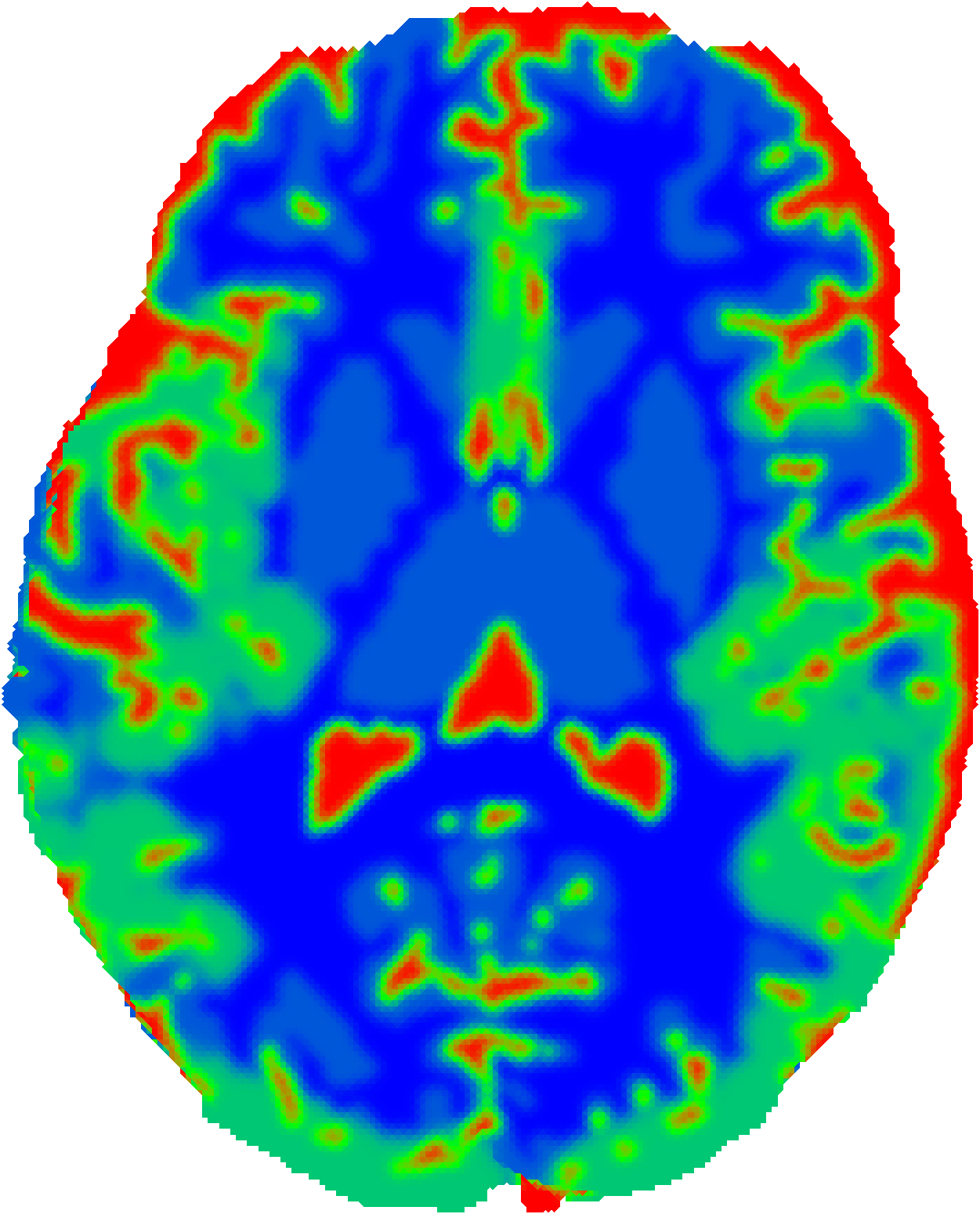} \\
      Mean, axial
        \end{minipage} 
        \begin{minipage}{0.5cm}
              \includegraphics[height=4.5cm]{mean_bar.png}
              \end{minipage}
 \begin{minipage}{2.2cm}
    \centering
            \includegraphics[width=2.1cm]{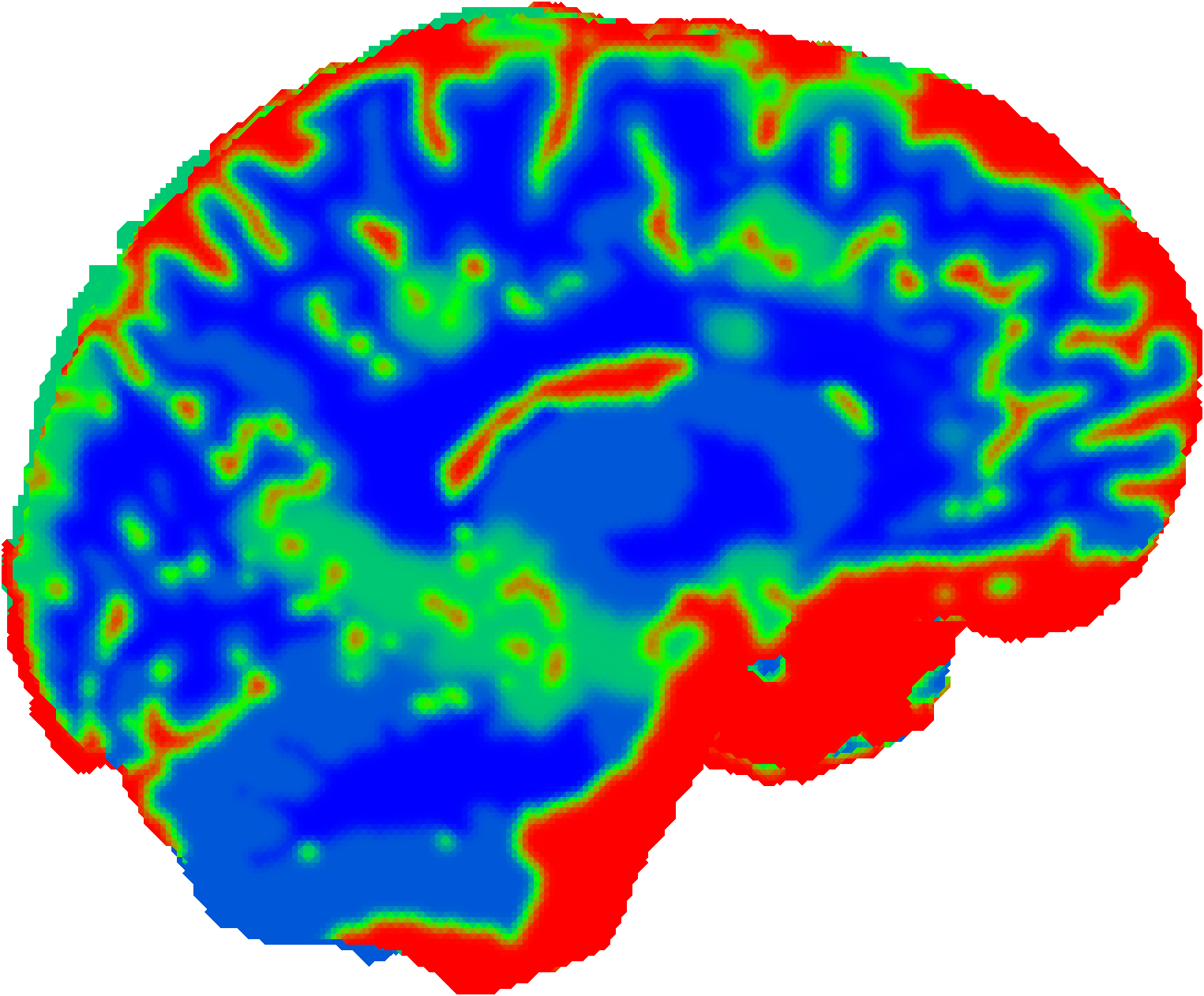} \\ Peak, sagittal
          \\ \vskip0.2cm
    \centering
        \includegraphics[width=2.0cm]{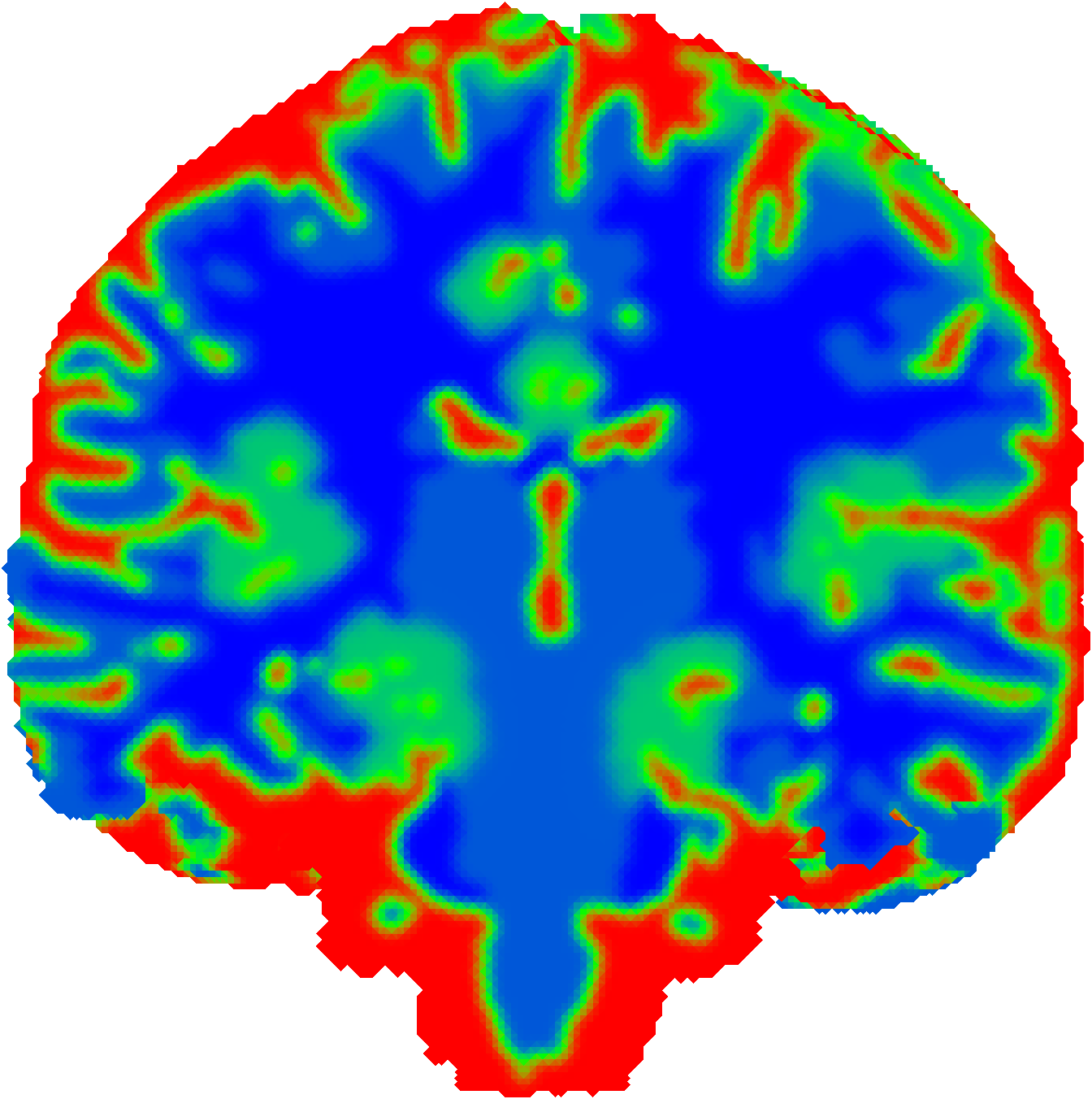} \\
        Peak, coronal
   \\ \vskip0.2cm
    \centering
        \includegraphics[width=1.7cm]{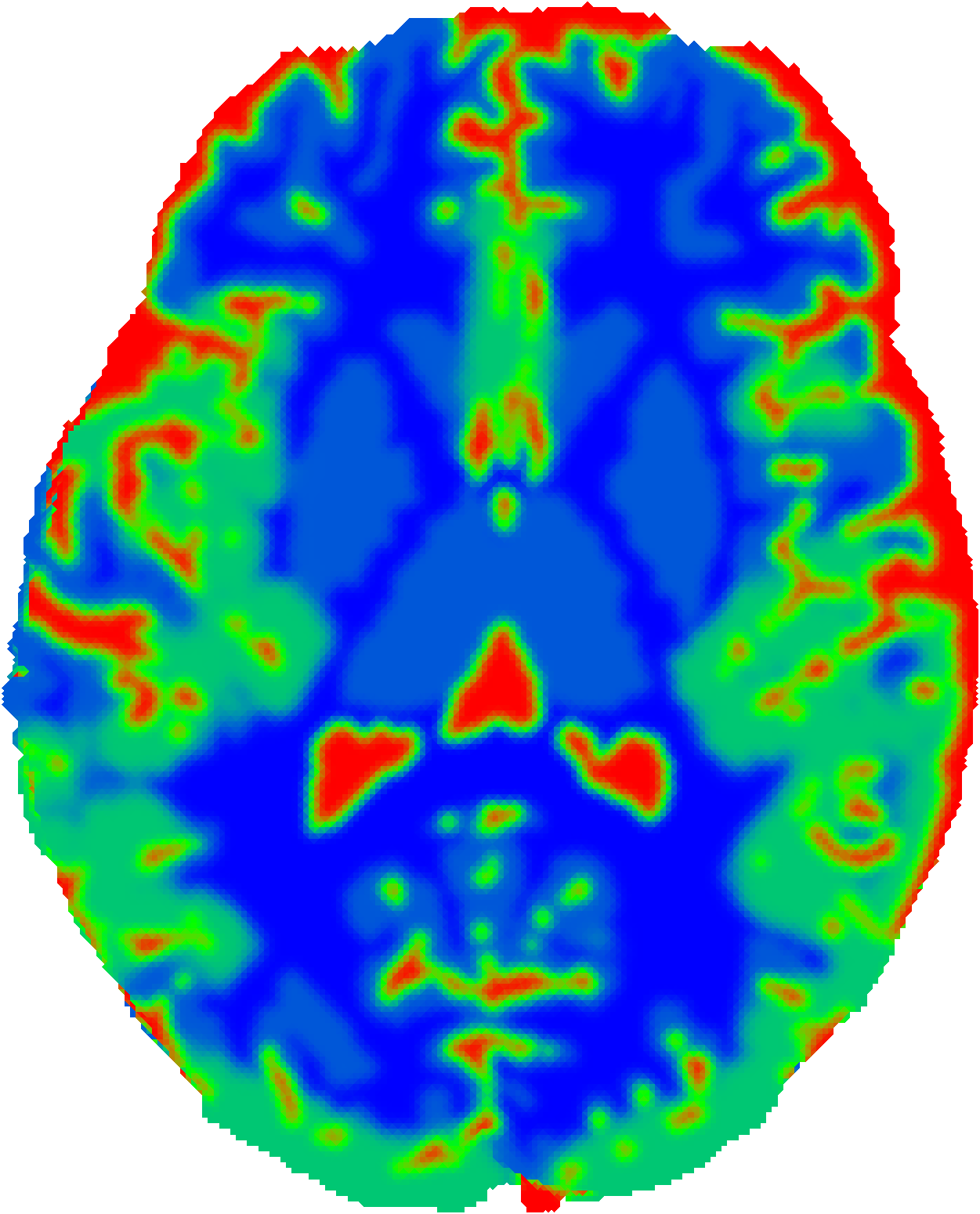} \\
        Peak, axial
        \end{minipage} 
       \begin{minipage}{0.5cm}
              \includegraphics[height=4.5cm]{max_bar.png}
              \end{minipage}
        \begin{minipage}{2.2cm}
    \centering
            \includegraphics[width=2.1cm]{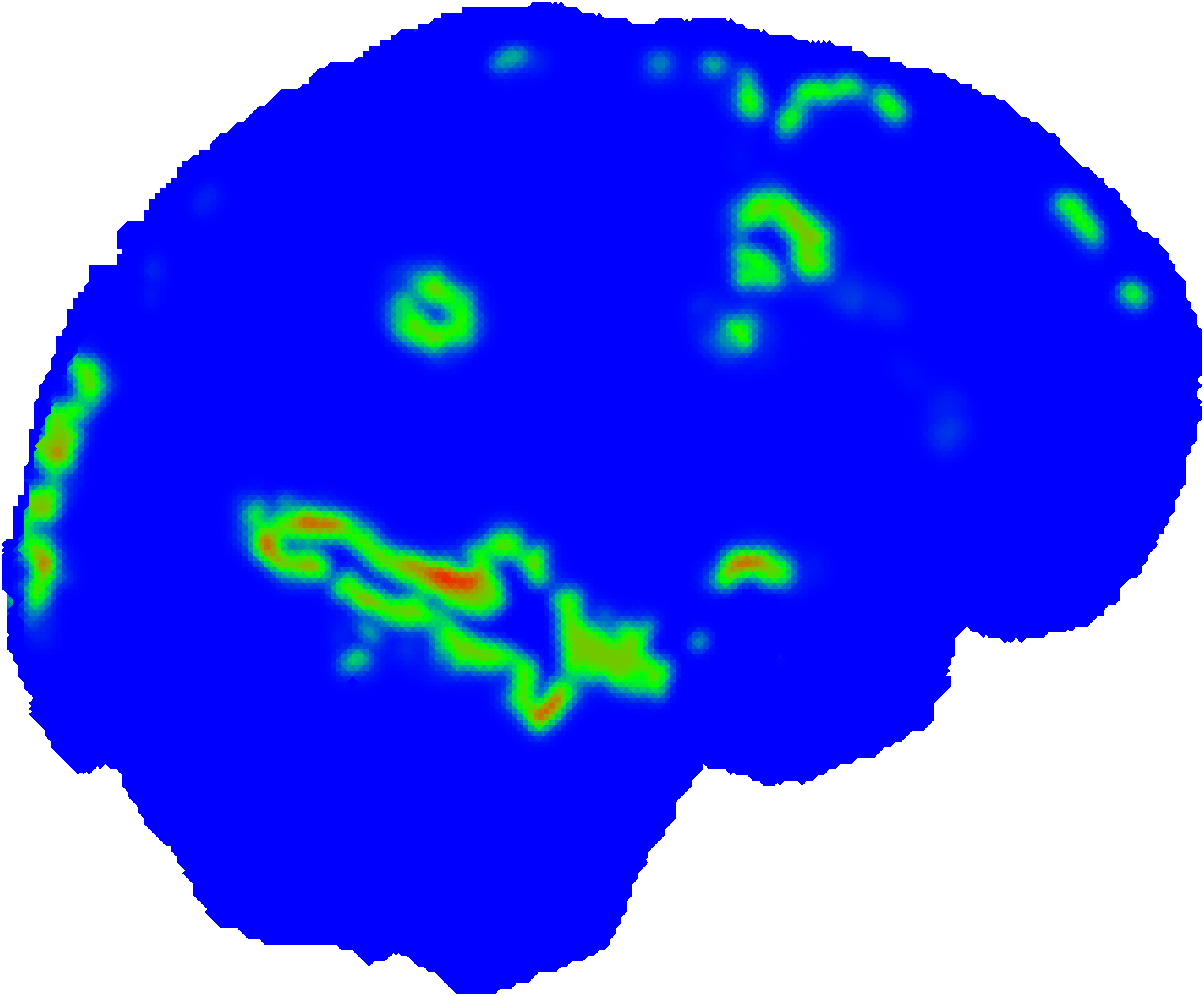} \\ STD, sagittal
         \\ \vskip0.2cm 
    \centering
        \includegraphics[width=2.0cm]{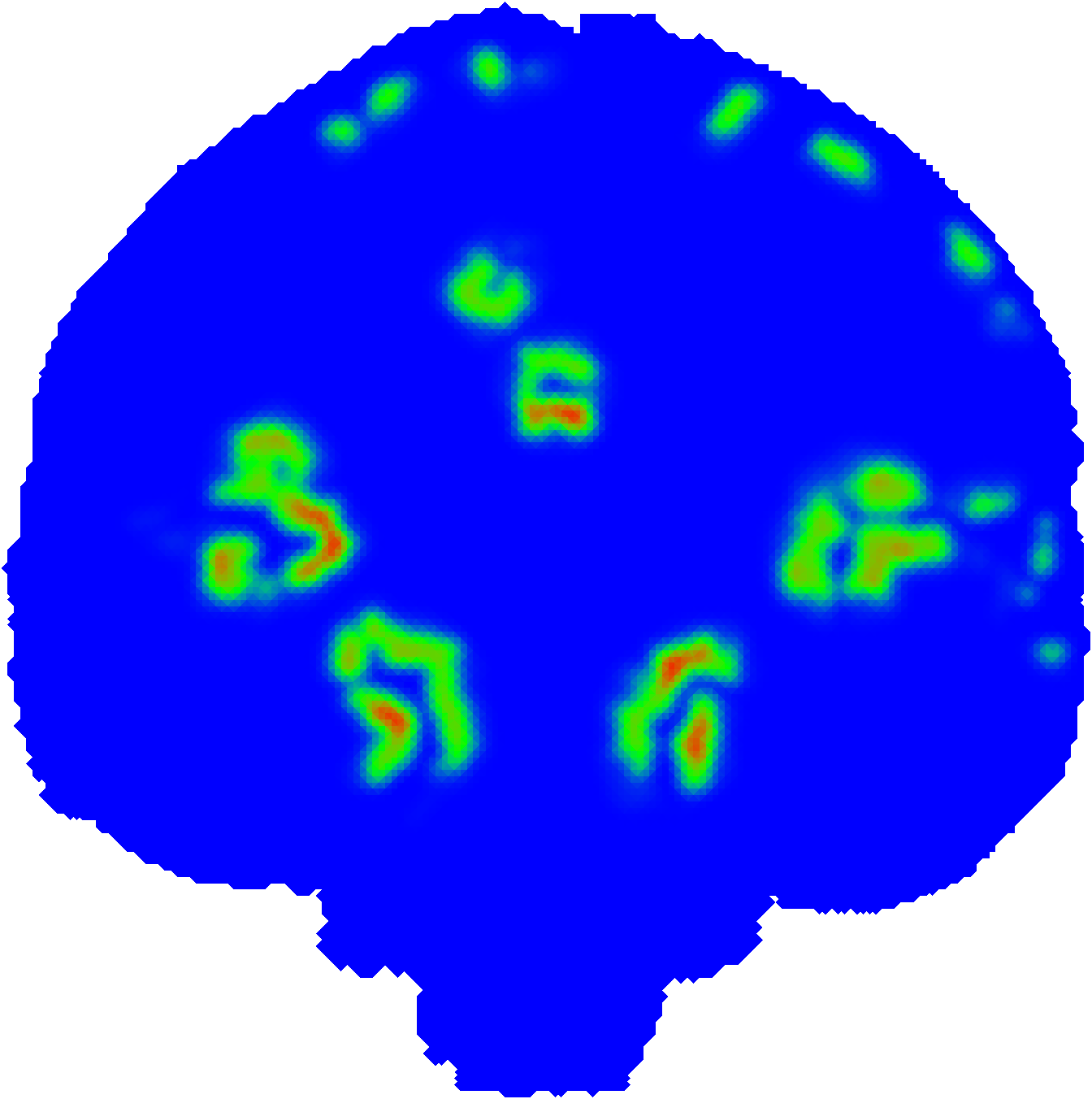} \\
        STD, coronal
      \\ \vskip0.2cm
    \centering
        \includegraphics[width=1.7cm]{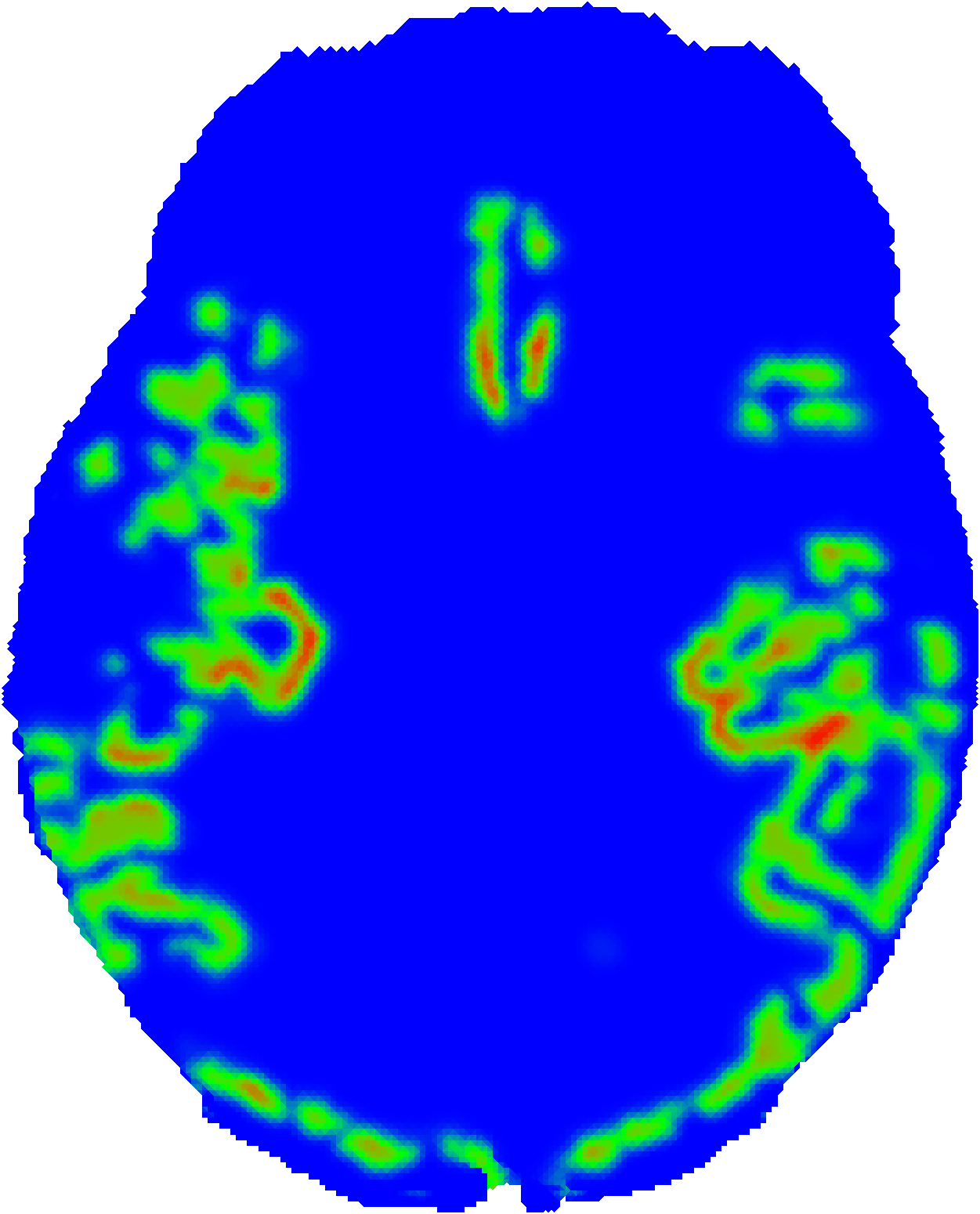} \\
        STD, axial
        \end{minipage} 
       \begin{minipage}{0.5cm}
              \includegraphics[height=4.5cm]{std_bar.png}
              \end{minipage}
        \\    
    \end{footnotesize}
    \caption{Estimates of  tissue conductivity for 80 bpm heart rate.}
    \label{fig:conductivity_80bpm}
\end{figure}

\section{Discussion}
\label{sec: Discussion}

The present study aimed at obtaining a dynamic estimate for brain conductivity given a piecewise constant background estimate \cite{dannhauer2010}, a rough arterial blood vessel segmentation $\Omega$, and a dynamical solution of NSEs together with Fick's law for microcirculation in $\hat{\Omega} $\cite{berg2020modelling, arciero2017mathematical}, and Archie's law of two-phase mixture conductivity \cite{j2001estimation,peters2005electrical,glover2000modified,cai2017electrical}. The motivation for the present study was that conductivity modelling has an important role in various electrophysiological modalities, including EEG  \cite{niedermeyer2004}, tES \cite{herrmann2013transcranial}, EIT \cite{cheney1999electrical}. In state-of-the-art applications, the dynamical features can be considered especially important in EIT applications monitoring dynamic conductivity distribution of the brain \cite{moura2021anatomical}. Recently, a dynamic conductivity model has been suggested to have an effect on EEG source localization \cite{lahtinen2023silico}, and dynamic electric field modelling in tES has been discussed in   \cite{alekseichuk2019electric}.

In order to make our approach applicable with limited {\em a priori} information about the blood flow as a boundary condition, we used a given pressure distribution since it can be easily approximated based on textbook non-invasive blood pressure measurements. An alternative approach would be to set the blood velocity in the incoming vessels, which might be approximated non-invasively, e.g., based on transcranial Doppler ultrasound \cite{lindegaard1987variations,wang2016peak,purkayastha2012transcranial} or whole-brain cerebral blood flow scans built upon MRI, positron emission tomography (PET), or single-photon emission computed tomography (SPECT) \cite{chen2018evaluation, liu2018resting, taber2005blood}. The two investigated boundary pressure distributions with a 60 and 80 bpm heart rate and a pulse pressure of 50 mmHg correspond to light physical activity, e.g., walking.  

A discretized non-Newtonian NSE system was solved by iterating the following two stages for each time step: (1) evaluation of a pressure estimate as the solution of a dynamic PPE \cite{samavaki2023ppe,pacheco2021continuous}, i.e., an equation following as the divergence of the original NSE; and (2) velocity field update by substituting the pressure field of the first stage into a Leray-regularized version of the original equation  \cite{leray1934mouvement}. It is well-known that regularization of NSEs is necessary to obtain a stable solution   \cite{pietarila2008three, guermond2004definition}. In addition, the viscosity field discretized using the linear Lagrangian basis was smoothed using a smoothing strategy similar to Leray regularization. That is, the resolution of the viscosity was made slightly lower than that of the pressure or the velocity field to establish the numerical stability of the solution.  The amount of smoothing was kept as low as possible; larger smoothing parameters led to numerical instability.

The results match well to the existing knowledge of blood pressure and velocity in cerebral blood circulation. For example, \cite{blanco2017blood} demonstrated a numerically simulated systolic normotensive blood pressure distribution that is close to the current peak estimates: 113 mmHg for the basilar artery, 110 mmHg for the distal medial striate artery, and 85 mmHg for the posterior parietal branch of the middle cerebral artery. The mean and peak velocity ranges of 0--0.26 m/s, 0--0.82 m/s for 60 bpm, and 0--0.53 m/s, 0--1.4 m/s for 80 bpm heart rate, respectively,  are in an appropriate agreement with the findings of experimental studies on cerebral blood velocity \cite{bouvy2016assessment,lindegaard1987variations,wang2016peak}. 

The mean viscosity is largely maintained in the interval 0.0035--0.0055 Pa s, which can be considered normal \cite{nader2019blood}. The peak viscosities are above the normal range, which can be expected to be the case for actual cerebral blood circulation too. Here the peak values can be interpreted to be at least partially due to the geometrical artifacts in the model, as the peak values are observed in regions corresponding to the grey matter where the individual arterial vessels are too thin to be reconstructed with the current MRI geometry. Due to this reason, those vessels were reconstructed as a clustered mass distribution.

Our distributional dynamic estimates for the volumetric blood concentration roughly match the experimental blood flow distributions obtained through MRI, PET, and SPECT \cite{chen2018evaluation, liu2018resting, taber2005blood}. The quickly decaying blood flow variation when moving away from the blood vessels is also a well-documented phenomenon \cite{caro2012mechanics}. The conductivity atlases obtained based on the concentration clearly show the estimated effect of microcirculation, including the time-variation. Comparing the present results with the recent dynamic conductivity estimates obtained with a cylindrical NSE model \cite{lahtinen2023silico,moura2021anatomical}, there is a match in the distribution and value-range of the conductivity estimates and their time-variation. Hence, we can conclude that our goal of modelling the conductivity distribution through dynamic NSEs was achieved, showing the feasibility of the task.

Compared to the approach of solving cylindrical NSEs  \cite{lahtinen2023silico,moura2021anatomical}, the importance of our present technique is its applicability to an arbitrary head segmentation with arterial vessels; whereas a solution for cylindrical NSEs needs to be extended and interpolated to cover the full computing domain, our method is, in principle, capable of finding the actual NSE-based blood flow for a given head segmentation. An obvious limitation of our approach is, however, its dependence on the 7-Tesla MRI data, which in this study was obtained from the open dataset of CEREBRUM-7T \cite{svanera2021cerebrum}. Applying our present method using datasets corresponding to a lower magnetic flux density might necessitate the embedding of an external arterial vessel structure relying on an intersubject average approximation akin to the interpolation strategy adopted in \cite{lahtinen2023silico,moura2021anatomical}.

As an obvious future work direction, we consider investigating the theoretical grounds of our present two-stage approach to solving NSEs, which combines a dynamical PPE with Leray-regularized NSEs. For example, analytical solutions and alternative regularization strategies may be involved. Another important direction is to examine the effect of the computing geometry on the results, for example, by performing a multi-subject study using our present approach or evaluating the two-stage NSE solution for a specific geometry, such as an individual vessel branch, to find its performance at different scales. Other alternatives might be to consider different boundary conditions in addition to the present pressure condition and to investigate the performance of alternative viscosity models in addition to the Carreau-Yasuda model.


\appendix
\section{Background Theory}
\label{B-H}
We define the Bochner and Hodge Laplacians for any vector field ${\bf u}$ as follows
\begin{align*}    
     \Delta_B{\bf u}&=(\delta d+d\delta){\bf u}\,,
      \\
       \Delta_H{\bf u}&=-\sharp (\delta d+d\delta)\flat {\bf u}\, .
\end{align*}
Furthermore, we define the (2,0)-tensors $ {\bf Su}$ and $ {\bf Au}$ as follows
\begin{equation}
\begin{aligned}
 {\bf Su}&:=\nabla {\bf u}+(\nabla {\bf u})^T 
 \\ 
  {\bf Au}&:=\nabla {\bf u}-(\nabla {\bf u})^T 
\end{aligned}   
\label{harmo}
\end{equation}
The following definitions are given
\begin{equation}
\begin{aligned}
{\bf L}{\bf u}&:=\mathsf{div}({\bf S}{\bf u})=\Delta_B {\bf u}+\mathsf{grad}(\mathsf{div}({\bf u}))+{\bf Ri}({\bf u})
\\
{\bf L_{A}}{\bf u}&:=\mathsf{div}({\bf A}{\bf u})=\Delta_H {\bf u}-\mathsf{grad}(\mathsf{div}({\bf u}))
\end{aligned}   
\label{Lu-Lu}
\end{equation}
where $\Delta_H {\bf u}=\Delta_B {\bf u}-{\bf Ri}({\bf u})$. 

Note that $\nabla\times {\bf u}$ and $\Box\times {\bf u}$ are curl and cross products of the vector field ${\bf u}$, respectively, and that the following equations give their representations in coordinates due to the components of the Levi-Civita symbol:
\begin{align*}
(\nabla\times {\bf u})^k&=\varepsilon^{ijk}g_{jh}u^h_{;i}
\\
(\Box\times {\bf u})^{k}&=\varepsilon_{ijh}\,g^{kh} u^i \Box^j  
\end{align*}

In 3D, for any smooth function $f\in C^{\infty}(\Omega)$ and two vector fields ${\bf u}$ and ${\bf v}$ in $\tau(\Omega)$, the following lemmas hold.
\setcounter{lemma}{0}
\renewcommand{\thelemma}{\Alph{section}\arabic{lemma}}
\begin{lemma}
\begin{align*}
  {\bf g}\big(\nabla\times {\bf u}, \nabla\times(f\,{\bf v})\big)= {\bf g}\big({\bf Au}(\nabla f), {\bf v}\big)+f({\bf Au} : \nabla{\bf v})\,,
\end{align*}
where 
\[
  {\bf Au}:\nabla{\bf v}={\bf g}(\nabla{\bf u},\nabla{\bf v})-(\nabla{\bf u}:\nabla{\bf v})\,.
\]
If the vector field $\bf v$ can be represented as a potential, such that there exists a smooth function $q$ satisfying $\nabla q={\bf v}$, then the given integral takes the specific form:
\begin{align*}
  {\bf g}\big(\nabla\times {\bf u}, \nabla\times(f\,\nabla q)\big)= {\bf g}\big({\bf Au}(\nabla f), \nabla q\big)\,.
\end{align*}
\label{lemma1}
\end{lemma}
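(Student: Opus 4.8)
The plan is to verify the identity by a direct coordinate computation, using the representation of the curl recorded above, $(\nabla\times{\bf u})^k=\varepsilon^{ijk}g_{jh}u^h_{;i}=\varepsilon^{ijk}\nabla_i u_j$ with $u_j=g_{jh}u^h$. First I would apply the Leibniz rule for the covariant derivative to the second factor, $\nabla_m(f{\bf v})_n=f_{;m}v_n+f\,\nabla_m v_n$, so that ${\bf g}(\nabla\times{\bf u},\nabla\times(f{\bf v}))$ splits into a piece carrying $\nabla f$ and a piece carrying $f\,\nabla{\bf v}$. The entire calculation then hinges on a single algebraic identity, the contracted product rule for the Levi--Civita tensor on an oriented Riemannian $3$-manifold,
\[
g_{kl}\,\varepsilon^{ijk}\varepsilon^{mnl}=g^{im}g^{jn}-g^{in}g^{jm}.
\]
Because the $1/\sqrt{\det g}$ factors built into $\varepsilon$ make this a genuine tensorial equality, no flatness or curvature hypothesis is needed for this lemma, in contrast to the assumptions imposed later on the full Navier--Stokes system.

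Substituting this identity into ${\bf g}(\nabla\times{\bf u},\nabla\times(f{\bf v}))=g_{kl}\,\varepsilon^{ijk}\nabla_i u_j\,\varepsilon^{mnl}(f_{;m}v_n+f\,\nabla_m v_n)$ annihilates both Levi--Civita tensors and leaves four fully contracted terms. I would then regroup them by the factor they carry. After raising indices, the two $\nabla f$-terms collapse to a contraction of $f^{;j}$ and $v^i$ against the antisymmetric combination $\nabla_j u_i-\nabla_i u_j$, which are exactly the components of ${\bf Au}=\nabla{\bf u}-(\nabla{\bf u})^T$; this reproduces ${\bf g}({\bf Au}(\nabla f),{\bf v})$. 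The two $f\,\nabla{\bf v}$-terms collapse to $f\big(\nabla_i u_j\,\nabla^i v^j-\nabla_i u_j\,\nabla^j v^i\big)=f\big({\bf g}(\nabla{\bf u},\nabla{\bf v})-\nabla{\bf u}:\nabla{\bf v}\big)$, which by the defining relation stated in the lemma is precisely $f\,({\bf Au}:\nabla{\bf v})$. Summing the two groups gives the first assertion. Throughout I would use metric compatibility $\nabla{\bf g}=0$ to raise and lower indices freely inside the covariant derivatives.

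The step I expect to be the main obstacle is the index bookkeeping in this regrouping. The antisymmetry of ${\bf Au}$ has to be played off against a fixed choice of which slot of the tensor is fed to $\nabla f$ and of which of the two double contractions is named ${\bf g}(\nabla{\bf u},\nabla{\bf v})$ (the Frobenius pairing $\nabla_i u_j\,\nabla^i v^j$) versus $\nabla{\bf u}:\nabla{\bf v}$ (the trace of the product, $\nabla_i u_j\,\nabla^j v^i$); it is exactly these conventions that force the $\nabla f$-group and the $\nabla{\bf v}$-group to carry the same overall sign and thus add rather than cancel. Getting these signs internally consistent, rather than any analytic difficulty, is where the care is required.

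Finally, the potential case ${\bf v}=\nabla q$ follows by specialising the first identity. Then $\nabla{\bf v}=\nabla\nabla q$ is the Hessian of $q$, which is symmetric since the Levi--Civita connection is torsion-free, so that $\nabla_m\nabla_n q=\nabla_n\nabla_m q$. As ${\bf Au}$ is antisymmetric, its contraction with this symmetric tensor vanishes, ${\bf Au}:\nabla(\nabla q)={\bf g}(\nabla{\bf u},\nabla\nabla q)-\nabla{\bf u}:\nabla\nabla q=0$, so the $f$-term drops out and only ${\bf g}({\bf Au}(\nabla f),\nabla q)$ survives, which is the stated reduced identity.
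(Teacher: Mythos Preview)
Your argument is correct and follows essentially the same route as the paper: both proofs expand the curls via the Levi--Civita tensor, apply the Leibniz/product rule to split off the $\nabla f$ contribution, and then use the contraction identity $g_{kl}\varepsilon^{ijk}\varepsilon^{mnl}=g^{im}g^{jn}-g^{in}g^{jm}$ to reduce each piece to the stated ${\bf Au}$ expressions. The only minor difference is organisational---the paper first invokes the vector identity $\nabla\times(f{\bf v})={\bf v}\times\nabla f+f\,\nabla\times{\bf v}$ and then handles the two resulting inner products separately, whereas you apply Leibniz directly inside the coordinate expression and regroup afterwards; your explicit treatment of the potential case via the symmetry of the Hessian against the antisymmetry of ${\bf Au}$ is also more detailed than what the paper records.
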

\begin{proof}
First, it is straightforward to verify that
\[
{\bf g}\big(\nabla\times {\bf u}, \nabla\times(f\,{\bf v})\big)={\bf g}(\nabla\times {\bf u}, {\bf v}\times\nabla f)+ f\,{\bf g}(\nabla\times {\bf u}, \nabla\times {\bf v})
\]
Expressed in coordinates, the equation is as follows:
\begin{align*}
    {\bf g}(\nabla\times {\bf u}, {\bf v}\times\nabla f)&=\varepsilon^h_{k\ell}\varepsilon^{ij\ell}f_{;h}g^{\alpha k} v_{\alpha}g_{j\beta}u^{\beta}_{;i}
    =f_{;i}v_{j}(g^{ih}u^{j}_{;h}-g^{jh}u^{i}_{;h})
    \\
    &={\bf g}\big({\bf Au}(\nabla f), {\bf v}\big)\,.
\\
   {\bf g}(\nabla\times {\bf u}, \nabla\times {\bf v})&= \varepsilon^h_{k\ell}\varepsilon^{ij\ell}g^{\alpha k} v_{\alpha;h}g_{j\beta}u^{\beta}_{;i}=v_{j;i}(g^{ih}u^{j}_{;h}-g^{jh}u^{i}_{;h})
   \\
   &=
   {\bf Au} : \nabla{\bf v}\,,
\end{align*}
where
\[
  {\bf Au}:\nabla{\bf v}={\bf g}(\nabla{\bf u},\nabla{\bf v})-(\nabla{\bf u}:\nabla{\bf v})=g^{hj}u^{i}_{;h}g_{ik}v^{k}_{;j}-u^{j}_{;i}v^{i}_{;j}\,.
\]
\end{proof}
\setcounter{lemma}{1}
\renewcommand{\thelemma}{\Alph{section}\arabic{lemma}}
\begin{lemma}
\begin{align*}
\int_{\Omega}
 \mathsf{div}\big(f\,{\bf Au}({\bf v})\big)\,\mathrm{d} \omega_\Omega &= \int_{\partial\Omega}
 {\bf g}( f\,\nabla\times {\bf u}, {\vec{{\bf n}}} \times {\bf v}) \,\mathrm{d}\omega_{\partial \Omega}\,.
\\
\int_{\Omega} \mathsf{div}\big(f\,{\bf Su}({\bf v})\big)\,\mathrm{d} \omega_\Omega &= \int_{\Omega} \mathsf{div}\big(f\,{\bf Au}({\bf v})\big)\,\mathrm{d} \omega_\Omega
\\
&+2\int_{\partial\Omega}{\bf g}\big(f \,(\nabla {\bf u})^T({\bf v}), {\vec{{\bf n}}}\big ) \,\mathrm{d}\omega_{\partial \Omega}\,.
\end{align*}
\label{lemma2}
\end{lemma}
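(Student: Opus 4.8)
The plan is to obtain both identities from a single tool, the Riemannian divergence theorem $\int_\Omega \mathsf{div}({\bf X})\,\mathrm{d}\omega_\Omega = \int_{\partial\Omega}{\bf g}({\bf X},{\vec{\bf n}})\,\mathrm{d}\omega_{\partial\Omega}$, applied to vector fields ${\bf X}$ assembled from $f$, ${\bf u}$ and ${\bf v}$; the only nontrivial ingredient is a pointwise Hodge-type duality for the antisymmetric part.

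For the first identity I would set ${\bf X} = f\,{\bf Au}({\bf v})$. The divergence theorem turns the left-hand side into $\int_{\partial\Omega} f\,{\bf g}({\bf Au}({\bf v}),{\vec{\bf n}})\,\mathrm{d}\omega_{\partial\Omega}$, so it remains to prove the pointwise identity ${\bf g}({\bf Au}({\bf v}),{\vec{\bf n}}) = {\bf g}(\nabla\times{\bf u},{\vec{\bf n}}\times{\bf v})$. I would first observe that ${\bf Au} = \nabla{\bf u} - (\nabla{\bf u})^T$ is antisymmetric, so its components $u^{j}_{;i}-u^{i}_{;j}$ involve only the antisymmetrized covariant derivative, in which the symmetric Christoffel symbols cancel; the identity is therefore purely algebraic and metric-independent in form. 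I would then expand both sides in coordinates using the Levi-Civita expressions for the curl and the cross product recorded in the background section and contract with the $\varepsilon$--$\delta$ relation $\varepsilon^{ijk}\varepsilon_{\ell m k}=\delta^{i}_{\ell}\delta^{j}_{m}-\delta^{i}_{m}\delta^{j}_{\ell}$. Both sides reduce to the same expression $v_i n_j(u^{j}_{;i}-u^{i}_{;j})$ (up to raising indices with ${\bf g}$), which proves the claim; here it is essential to use precisely the sign conventions for $\nabla\times$ and the cross product fixed earlier, since those two orientation choices must be mutually compatible for the $+$ sign to appear.

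The second identity is then immediate from the definitions in \eqref{harmo}. Subtracting them gives ${\bf Su}-{\bf Au}=2(\nabla{\bf u})^T$, hence the pointwise vector identity $f\,{\bf Su}({\bf v}) - f\,{\bf Au}({\bf v}) = 2f\,(\nabla{\bf u})^T({\bf v})$. Taking the divergence of this relation and integrating over $\Omega$ places the two $\mathsf{div}$ terms on the correct sides of the asserted equation, while the divergence theorem applied to ${\bf X}=2f\,(\nabla{\bf u})^T({\bf v})$ yields exactly the remaining boundary integral $2\int_{\partial\Omega}{\bf g}(f\,(\nabla{\bf u})^T({\bf v}),{\vec{\bf n}})\,\mathrm{d}\omega_{\partial\Omega}$.

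The main obstacle is the pointwise duality in the first part: one must keep the raising and lowering of indices and the two Levi-Civita contractions straight, and in particular confirm that antisymmetrization genuinely removes the connection terms and that the chosen orientations for curl and cross product are consistent. Once this short algebraic lemma is secured, each integral identity follows from a single application of the divergence theorem, with no further analytic input required.
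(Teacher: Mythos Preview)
Your proposal is correct and follows essentially the same approach as the paper: the divergence theorem applied to $f\,{\bf Au}({\bf v})$ together with the pointwise identity ${\bf g}({\bf Au}({\bf v}),{\vec{\bf n}})={\bf g}(\nabla\times{\bf u},{\vec{\bf n}}\times{\bf v})$ (which is the same $\varepsilon$--$\delta$ computation appearing in the proof of Lemma~\ref{lemma1}) gives the first identity, and the algebraic relation ${\bf Su}({\bf v})={\bf Au}({\bf v})+2(\nabla{\bf u})^T({\bf v})$ plus one more application of the divergence theorem gives the second. The paper's own proof is extremely terse (``follows directly from the definition'' for the first part), so your version is simply a more explicit rendering of the same argument.
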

\begin{proof}
The proof of the first integral follows directly from the definition. The second integral can be determined by applying the formula \eqref{harmo}: 
\[
 {\bf Su}({\bf v})={\bf Au}({\bf v})+2(\nabla {\bf u})^T({\bf v})\,.
\]  
\end{proof}
\\
Here, ${\vec{{\bf n}}}$ denotes the unit normal vector that is orthogonal to the surface. 
\setcounter{lemma}{2}
\renewcommand{\thelemma}{\Alph{section}\arabic{lemma}}
\begin{lemma}
The following integral is valid for the vector field ${\bf u}$ with the constraints $\mathsf{div}({\bf u})=0$ and ${\bf Ri}({\bf u})=0$
\begin{align*}
&\int_{\Omega} {\bf g}(\nabla\times(\nabla\times{\bf u}), f{\bf v}\big)\,\mathrm{d} \omega_\Omega=\int_{\Omega} {\bf g}\big(\nabla\times{\bf u}, \nabla\times(f{\bf v}))\,\mathrm{d} \omega_\Omega
\\
&+\int_{\Omega}f\,({\bf Au}:\nabla{\bf v})  \,\mathrm{d} \omega_\Omega-\int_{\partial\Omega}{\bf g}(f\,\nabla\times{\bf u}, {\vec{{\bf n}}}\times{\bf v})\,\mathrm{d}\omega_{\partial \Omega}\,.
\end{align*}
The integral takes as follows in the specific form:
\begin{align*}
\int_{\Omega} {\bf g}(\nabla\times(\nabla\times{\bf u}), f\,\nabla q\big)\,\mathrm{d} \omega_\Omega &=\int_{\Omega} {\bf g}\big(\nabla\times{\bf u}, \nabla\times(f\,\nabla q))\,\mathrm{d} \omega_\Omega
\\
&-\int_{\partial\Omega}{\bf g}(f\,\nabla\times{\bf u}, {\vec{{\bf n}}}\times\nabla q)\,\mathrm{d}\omega_{\partial \Omega}\,.
\end{align*}
\label{curl-au}
\end{lemma}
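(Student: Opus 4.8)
\noindent\textit{Proof sketch.} The plan is to read the stated identity off the integrated Leibniz rule for the divergence of the auxiliary vector field $f\,\mathbf{Au}(\mathbf{v})$, in which $\mathbf{Au}(\mathbf{v})$ denotes the action of the antisymmetric tensor $\mathbf{Au}$ on $\mathbf{v}$, and then to invoke Lemmas~\ref{lemma1} and~\ref{lemma2} to cast the resulting pieces into the required form. First I would expand pointwise
\[
\mathsf{div}\big(f\,\mathbf{Au}(\mathbf{v})\big)=\mathbf{g}\big(\nabla f,\mathbf{Au}(\mathbf{v})\big)+f\,\mathbf{g}\big(\mathsf{div}(\mathbf{Au}),\mathbf{v}\big)+f\,(\mathbf{Au}:\nabla\mathbf{v}),
\]
where the last term is the full contraction. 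The structural input is the identification of the tensor divergence $\mathsf{div}(\mathbf{Au})=\mathbf{L_A}\mathbf{u}$ with the double curl: combining the definitions in \eqref{Lu-Lu}, namely $\mathbf{L_A}\mathbf{u}=\Delta_H\mathbf{u}-\mathsf{grad}(\mathsf{div}\,\mathbf{u})$ and $\Delta_H\mathbf{u}=\Delta_B\mathbf{u}-\mathbf{Ri}(\mathbf{u})$, with the hypotheses $\mathsf{div}(\mathbf{u})=0$ and $\mathbf{Ri}(\mathbf{u})=0$ collapses $\mathsf{div}(\mathbf{Au})$ to $-\nabla\times(\nabla\times\mathbf{u})$; this is precisely where both constraints are consumed. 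Substituting turns the middle term into $-f\,\mathbf{g}(\nabla\times(\nabla\times\mathbf{u}),\mathbf{v})$.

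Next I would integrate the expansion over $\Omega$. By the divergence theorem the left-hand side is a boundary flux, and this flux is exactly the first identity of Lemma~\ref{lemma2}, $\int_\Omega \mathsf{div}(f\,\mathbf{Au}(\mathbf{v}))\,\mathrm{d}\omega_\Omega=\int_{\partial\Omega}\mathbf{g}(f\,\nabla\times\mathbf{u},\vec{\mathbf{n}}\times\mathbf{v})\,\mathrm{d}\omega_{\partial\Omega}$. Solving the integrated relation for $\int_\Omega f\,\mathbf{g}(\nabla\times(\nabla\times\mathbf{u}),\mathbf{v})\,\mathrm{d}\omega_\Omega$ leaves the volume term $\int_\Omega \mathbf{g}(\nabla f,\mathbf{Au}(\mathbf{v}))\,\mathrm{d}\omega_\Omega$ to be rewritten. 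Using the antisymmetry $\mathbf{g}(\nabla f,\mathbf{Au}(\mathbf{v}))=-\mathbf{g}(\mathbf{Au}(\nabla f),\mathbf{v})$ and then Lemma~\ref{lemma1}, which equates $\mathbf{g}(\mathbf{Au}(\nabla f),\mathbf{v})=\mathbf{g}(\nabla\times\mathbf{u},\mathbf{v}\times\nabla f)$ and relates it to $\mathbf{g}(\nabla\times\mathbf{u},\nabla\times(f\mathbf{v}))$ together with the internal identity $\mathbf{g}(\nabla\times\mathbf{u},\nabla\times\mathbf{v})=\mathbf{Au}:\nabla\mathbf{v}$ proved there, I would trade this term for the curl product $\int_\Omega\mathbf{g}(\nabla\times\mathbf{u},\nabla\times(f\mathbf{v}))\,\mathrm{d}\omega_\Omega$ and the bulk contribution $\int_\Omega f\,(\mathbf{Au}:\nabla\mathbf{v})\,\mathrm{d}\omega_\Omega$. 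Collecting everything reproduces the first asserted identity.

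The specific form then follows for free: taking $\mathbf{v}=\nabla q$ gives $\nabla\times\mathbf{v}=\nabla\times\nabla q=0$, so by Lemma~\ref{lemma1} the integrand $\mathbf{Au}:\nabla\mathbf{v}=\mathbf{g}(\nabla\times\mathbf{u},\nabla\times\mathbf{v})$ vanishes identically, the volume term $\int_\Omega f\,(\mathbf{Au}:\nabla\mathbf{v})\,\mathrm{d}\omega_\Omega$ drops out, and only the curl product and the surface integral remain. I expect the main difficulty to be sign and convention bookkeeping rather than anything analytic: one must stay consistent about which index the tensor divergence contracts, about the sign in $\Delta_H\mathbf{u}=-\sharp(\delta d+d\delta)\flat\mathbf{u}$ that fixes $\mathsf{div}(\mathbf{Au})=-\nabla\times(\nabla\times\mathbf{u})$, and about the scalar triple product rearrangement $\mathbf{g}((\nabla\times\mathbf{u})\times\mathbf{v},\vec{\mathbf{n}})=-\mathbf{g}(\nabla\times\mathbf{u},\vec{\mathbf{n}}\times\mathbf{v})$ in the boundary term; a sign slip in any of these flips the coefficient of the $\mathbf{Au}:\nabla\mathbf{v}$ term.
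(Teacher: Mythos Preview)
Your proposal is correct and follows essentially the same route as the paper: the paper's proof amounts to one sentence invoking the Ricci identity (your collapse of $\mathsf{div}(\mathbf{Au})$ to the double curl under $\mathsf{div}\,\mathbf{u}=0$ and $\mathbf{Ri}(\mathbf{u})=0$) together with Lemma~\ref{lemma1}, and your expansion via the Leibniz rule for $\mathsf{div}\big(f\,\mathbf{Au}(\mathbf{v})\big)$, Lemma~\ref{lemma2} for the boundary term, and Lemma~\ref{lemma1} for the remaining volume piece is just a fleshed-out version of that. Your closing caveat about sign and index conventions is apt and is really the only place where care is required.
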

\begin{proof}
By applying the \emph{Ricci Identity} and the \emph{Bianchi Identity} \cite{petersen}, we obtain the following expression:
\[
  u^h_{;ih}-u^h_{;hi}=u^{h} R_{ih}
\]
which, in conjunction with the assumptions provided by the lemma and applying lemma \ref{lemma1}, completes the proof.

\end{proof}



\section{Pulse wave equation}
\label{pulse_wave_equation}

Denoting the boundary normal derivative of $p$ by 
\begin{equation} 
  \frac{\partial p}{\partial \vec{\bf n}}  = {\bf g}(\nabla p, {\vec{{\bf n}}})\,,  
\end{equation}
equation (\ref{eqn:line-2.4}) set on the boundary $\partial \Omega$ can be expressed as 
\begin{equation}
  \frac{\partial p}{\partial \vec{\bf n}} = - \frac{1}{\nu}  \,  \left( \frac{\partial p}{\partial t} - \frac{\partial p^{\mathcal{(B)}}}{\partial t} \right) \quad \hbox{with} \quad \nu = \frac{(1+\beta^2) \,  \overline{Q}}{\zeta \, \lambda \,  \beta^2  \, \overline{V}}
\end{equation}
It follows that 
\[
 \frac{\partial }{\partial t} \frac{\partial p}{\partial \vec{\bf n}}  = - \frac{1}{\nu} \,  \left(\frac{\partial^2 p}{\partial t^2} - \frac{\partial^2  p^{\mathcal{(B)}}}{\partial t^2} \right)\,,
\]
and 
\[
\frac{\partial }{\partial \vec{\bf n}} \frac{\partial p}{\partial t} = \frac{\partial }{\partial \vec{\bf n}}   \left( -\nu \frac{\partial p}{\partial \vec{\bf n} } + \frac{\partial  p^{\mathcal{(B)}}}{\partial t} \right) \,.
\]
Further the pressure $p$ satisfies the following wave equation on the boundary $\partial \Omega$:
\begin{equation}
    \frac{1}{\nu^2} \,  \frac{\partial^2 p}{\partial t^2}   -  \frac{\partial^2 p}{\partial \vec{\bf n}^2 } = \frac{1}{\nu^2} \,  \frac{\partial^2  p^{\mathcal{(B)}}}{\partial t^2} + \frac{1}{\nu} \, \frac{\partial}{\partial \vec{\bf n}} \frac{\partial  p^{\mathcal{(B)}}}{\partial t}\, . 
\end{equation}
\begin{equation}
    \frac{1}{\nu^2} \,  \frac{\partial^2 p}{\partial t^2}   -  \frac{\partial^2 p}{\partial \vec{\bf n}^2 } = \frac{1}{\nu^2} \,  \frac{\partial^2  p^{\mathcal{(B)}}}{\partial t^2} - \frac{1}{\nu} \, \frac{\partial}{\partial \vec{\bf n}} \frac{\partial  p^{\mathcal{(B)}}}{\partial t}\, . 
\end{equation}
The velocity of the waveform solution in the above equation is denoted by $\nu$, with the right-hand side terms serving as sources.

\section*{Acknowledgements}
The work of Maryam Samavaki and Sampsa Pursiainen is supported by the Academy of Finland Centre of Excellence (CoE) in Inverse Modeling and Imaging 2018–2025 (decision 336792) and project 336151; Santtu Söderholm's work has been funded by the ERA PerMED (PerEpi) project 344712; and Arash Zarrin Nia has been funded by a scholarship from the K. N. Toosi University of Technology.

\bibliographystyle{elsarticle-num} 
\bibliography{cas-refs}





\end{document}